\documentclass[letterpaper,11pt,reqno]{amsart}

\makeatletter
\usepackage{amssymb}
\usepackage{latexsym}
\usepackage{amsbsy}
\usepackage{amsfonts}
\usepackage{hyperref}
\usepackage{graphicx}
\usepackage{enumerate}
\usepackage{mathtools}
\usepackage{color}
\usepackage{comment}
\usepackage{ulem}

 \newcommand{\timed}{\partial_t^\tau}

\usepackage{tikz}

\def\marginpar#1{\ignorespaces}

\DeclareMathOperator{\Tr}{Tr}

\textheight=600pt \textwidth=450pt \oddsidemargin=10pt \evensidemargin=10pt \topmargin=14pt
\headheight=8pt
\parindent=0pt
\parskip=2pt

  \newcommand{\beq}{\begin{equation}}
    \newcommand{\eeq}{\end{equation}}
    
    \newcommand{\bal}{\begin{align}}
    \newcommand{\eal}{\end{align}}
    \newcommand{\bals}{\begin{align*}}
    \newcommand{\eals}{\end{align*}}
    

    \newcommand{\bbN}{{\mathbb{N}}}
    \newcommand{\bbR}{{\mathbb{R}}}

    \newcommand{\bbZ}{{\mathbb{Z}}}

    \newcommand{\calE}{{\mathcal E}}

    \newcommand{\calL}{{\mathcal L}}
    
    \newcommand{\calF}{{\mathcal F}}

    \newcommand{\eps}{\varepsilon}
    
    \newcommand{\lb}{\label}

\DeclareMathOperator\Lip{Lip}

\DeclareMathOperator\argmin{\arg \min}
\DeclareMathOperator\argmax{\arg \max}

\newtheorem{theorem}{Theorem}[section]
\newtheorem{remark}{Remark}[section]
\newtheorem{ex}{Example}[section]
\newtheorem{lemma}[theorem]{Lemma}
\newtheorem{proposition}[theorem]{Proposition}
\newtheorem{corollary}[theorem]{Corollary}
\newtheorem{definition}[theorem]{Definition}

\numberwithin{equation}{section}
\makeatother
\begin{document}
\title[Policy iteration for nonconvex viscous HJ equations]{Policy iteration for nonconvex viscous Hamilton--Jacobi equations}

\author[X. Guo, H. V. Tran, Y. P. Zhang]{Xiaoqin Guo, Hung Vinh Tran, Yuming Paul Zhang}

\thanks{
X. Guo is supported by Simons Foundation through Collaboration Grant for Mathematicians \#852943. 
H. V. Tran is partially supported by NSF grant DMS-2348305. Y. P. Zhang is partially supported by NSF CAREER grant DMS-2440215, Simons Foundation Travel Support MPS-TSM-00007305, and a start-up grant at Auburn University.
}

\address[X. Guo]
{
Department of Mathematical Sciences, 
University of Cincinnati,
2815 Commons Way, Cincinnati OH 45221-0025}
\email{guoxq@ucmail.uc.edu}

\address[H. V. Tran]
{
Department of Mathematics, 
University of Wisconsin Madison, Van Vleck Hall, 480 Lincoln Drive, Madison, WI 53706}
\email{hung@math.wisc.edu}

\address[Y. P. Zhang]
{
Department of Mathematics and Statistics, Auburn University, Parker Hall, 
221 Roosevelt Concourse, Auburn, AL 36849}
\email{yzhangpaul@auburn.edu}

\date{\today} 

\keywords{Finite differences; viscous Hamilton--Jacobi equations; nonconvex setting; policy iteration; convergence rates}

\begin{abstract}
We study the convergence rates of policy iteration (PI) for nonconvex viscous Hamilton--Jacobi equations using a discrete space-time scheme, where both space and time variables are discretized. 
We analyze the case with an uncontrolled diffusion term, which corresponds to a possibly degenerate viscous Hamilton--Jacobi equation. 
 We first obtain an exponential convergent result of PI for the discrete space-time schemes.
We then investigate the discretization error.
\end{abstract}

\maketitle


\section{Introduction}

\subsection{Settings} 
In this paper, we are interested in the policy iteration (PI) for the following viscous Hamilton--Jacobi equation
\begin{equation}
\label{3.1}
\begin{cases}
\partial_t v(t,x) + H(t,x, \nabla v(t,x)) =-\frac12\Tr ((\sigma\sigma^T)(t,x) D^2 v(t,x)) \qquad &\text{ in } (0,T)\times \mathbb{R}^d,\\
 v(T,x) = g(x) \qquad &\text{ on }  \mathbb{R}^d.
 \end{cases}
\end{equation}

\quad We begin by providing a motivation for studying the nonconvex viscous Hamilton--Jacobi equation \eqref{3.1}.
 We consider a zero-sum diﬀerential game played by two
players, I and II, who are both rational. 
In the game, Player I aims to minimize while Player II aims to maximize a certain payoff functional by controlling the dynamics of the system state, which represents the location of the pair in the game.
Fix $T>0$ and $d, m_\alpha, m_\beta \in \bbN$.
Let $A \subseteq \mathbb{R}^{m_\alpha}$ and $ B \subseteq \mathbb{R}^{m_\beta}$ be compact sets.
For $(t,x)\in (0,T)\times \bbR^d$, the system state is governed by the stochastic differential equation:
\begin{equation*}
\begin{cases}
d X(s) = f(s,X(s), a(s),b(s))\,ds+\sigma(s,X(s))\,d B_s \quad \text{ for } s\in (t,T],\\
X(t)=x\in \bbR^d.
\end{cases}
\end{equation*}
Here, $X(s) \in \mathbb{R}^d$ is the system state, and $B_s$ denotes the \( d \)-dimensional Brownian motion. 
Let \(\mathcal{F} = (\mathcal{F}_s)_{s \geq 0}\) be the filtration generated by \((B_s)_{s \geq 0}\). 
We assume that $a:[t,T] \to A$, $b:[t,T]\to B$ are $\{\calF_s\}_{s\in [t,T]}$-adapted processes (or controls or policies). 
The standard admissible controls for Players I and II in time $[t,T]$ are denoted by $\mathcal{A}_t$ and $ \mathcal{B}_t$, respectively, where
\begin{align*}
&\mathcal{A}_t= \{a:[t,T] \to A \,:\, a \text{ is a $\{\calF_s\}_{s\in [t,T]}$-adapted process} \},\\
&\mathcal{B}_t= \{b:[t,T] \to B \,:\, b \text{ is a $\{\calF_s\}_{s\in [t,T]}$-adapted process} \}.
\end{align*}
We identify any two controls which agree a.e.
Assume that $f:[0,T]\times \bbR^d\times A\times B\to\bbR^d$ and $\sigma:[0,T]\times \bbR^d\to\bbR^{d\times d}$ are uniformly bounded and Lipschitz continuous, and $\sigma$ is diagonal, that is, $\sigma={\rm diag}(\sigma_1,\ldots,\sigma_d)$.
We denote $\Sigma_i=\sigma_i^2$ for $1\leq i\leq d$ and $\Sigma={\rm diag}(\Sigma_1,\ldots,\Sigma_d)$.
Note that $\sigma, \Sigma$ do not depend on the controls in our setting.

\quad Let $c:[0,T]\times \bbR^d\times A\times B\to\bbR$ and $g:\bbR^d \to \bbR$ be Lipschitz continuous functions, which represent the running cost and the terminal cost, respectively.
The upper value function of the game is given by
\begin{equation}\lb{2.2}
v_*(t,x):= \sup_{\beta\in\Gamma_t} \inf_{a \in \mathcal{A}_t} E_{tx}\left[\int_t^T c(s,X(s), a(s),\beta[a](s))\,ds + g(X(T)) \right],
\end{equation}
Here, the set of strategies for player II beginning at time $t$ is
\[
\Gamma_t=\{\beta:\mathcal{A}_t \to \mathcal{B}_t \text{ nonanticipating}\},
\]
where nonanticipating means that, for $a_1,a_2\in \mathcal{A}_t$ and $s\in [t,T]$,
\[
a_1(\cdot)=a_2(\cdot) \text{ on } [t,s) \quad \implies \quad \beta[a_1](\cdot)=\beta[a_2](\cdot) \text{ on } [t,s).
\]
And $E_{tx}[G]$ denotes the expected value of $G$.
It is known that under suitable assumptions (see \cite{Fleming-Souganidis}), 
$v_*$ defined by \eqref{2.2} is the viscosity solution to \eqref{3.1}, where  the Hamiltonian $H:[0,T]\times\bbR^d\times\bbR^d\times \bbR^{d\times d}\to \bbR$ is given by
\beq\lb{3.1'}
H(t,x,p):= \sup_{b\in B}\inf_{a \in A} L(t,x,p)(a,b)
\eeq
and
\[
L(t,x,p)(a,b):=c(t,x,a,b) + p \cdot f(t,x,a,b).
\]
In general, the second-order term in \eqref{3.1} could be degenerate as we only have 
\[
\Sigma=\sigma \sigma^T=\sigma^2 ={\rm diag}(\Sigma_1,\ldots,\Sigma_d) \geq 0.
\]
We say that $\Sigma$ is the diffusion matrix.
When $\sigma=0$, \eqref{3.1} becomes a first-order Hamilton--Jacobi equation (see \cite[Chapter 3]{tranbook}).
In the literature, \eqref{3.1} is also called a Bellman--Isaacs equation.
We note that we do not use the game-theoretic framework in the analysis of this paper.

\quad Let us assume that the infimum and supremum in \eqref{3.1'} can be achieved. 
More precisely, we assume throughout this paper that the following $\alpha_b$ and $\beta$ are always well-defined.
For each $b\in B$, denote by
\begin{equation}
\label{eq:a}
\alpha_b(t,x,p)\in \argmin_{a \in A} L(t,x,p)(a,b) 
\end{equation}
and, with one fixed selection $\alpha_b$,
\begin{equation}
\label{eq:b}
\beta(t,x,p)\in \argmax_{b \in B}  L(t,x,p)(\alpha_b(t,x,p),b) .
\end{equation}
Actually, we will only need $\alpha_b$ and $\beta$ to be well-defined for $(t,x)$ in discrete space-time grids.
If $v_*$ is smooth enough, the optimal policy corresponding to \eqref{2.2} is 
\begin{equation*}
\beta_{*}(t,x) = \beta(t,x, \nabla v_{*}(t,x))
\end{equation*}
and
\[
\alpha_{*}(t,x) = \alpha_{\beta_*(t,x)}(t,x, \nabla v_{*}(t,x)).
\]
Of course, if $v_*$ is only Lipschitz, $\alpha_*$ and $\beta_*$ are not well-defined in the classical sense.

\quad Policy iteration is an approximate dynamic programming, 
which alternates between policy evaluation to obtain the value function with the current control
and policy improvement to optimize the value function.
More precisely, for $n = 0,1, \cdots$, the iterative procedure is as follows:
\begin{itemize}
\item
Given $\alpha_n=\alpha_n(t,x),\beta_n=\beta_n(t,x)$, solve the linear PDE 
\begin{equation}
\label{eq:PDEiter}
\begin{cases}
\partial_t v_n + L(t,x,\nabla v_n)(\alpha_n,\beta_n) = -\frac12\Tr ((\sigma\sigma^T)(t,x) D^2 v_n)  &\text{ in } (0,T)\times \mathbb{R}^d,\\
v_n(T,x) =g(x)  &\text{ on }  \mathbb{R}^d.
 \end{cases}
 \end{equation}
\item
Set
\begin{equation}
\label{eq:uiter}
\begin{aligned}
\alpha_{n+1,b}(t,x)&\in \argmin_{a \in A} L(t,x,\nabla v_n(t,x))(a,b),\\
\beta_{n+1}(t,x)&\in \argmax_{b \in B} L(t,x,\nabla v_n(t,x))(\alpha_{n+1,b}(t,x),b),\\
\alpha_{n+1}(t,x) &= \alpha_{n+1,\beta_{n+1}(t,x)}(t,x).    
\end{aligned}
\end{equation}
\end{itemize}
Thanks to our definition,
\[
H(t,x,\nabla v_n)=L(t,x,\nabla v_n)(\alpha_{n+1},\beta_{n+1}).
\]
Besides, we note the following important inequalities, for any $a\in A, b\in B$,
\begin{equation*}
    L(t,x,\nabla v_n)(\alpha_{n+1,b},b)
    \leq L(t,x,\nabla v_n)(\alpha_{n+1},\beta_{n+1})
    \leq L(t,x,\nabla v_n)(a,\beta_{n+1}).
\end{equation*}
A key question about PI is to understand how the sequence $\{v_n\}$ approximates the optimal value $v_{*}$, and how the sequence of policies $\{(\alpha_n,\beta_n)\}$ approximates the optimal policy $(\alpha_*,\beta_*)$. 
We note that $(\alpha_n,\beta_n)$ is not necessarily unique for each $n\in \bbN$, and $(\alpha_*,\beta_*)$ might not be well-defined in the classical sense.
Furthermore, we do not have any information on the regularity of $\alpha_n,\beta_n$ with respect to $(t,x)$ as we do not place any such assumption in \eqref{eq:a}--\eqref{eq:b}.
These points contribute to making the problem both extremely challenging and interesting.
To the best of our knowledge, there was no result in the literature regarding PI for nonconvex Hamilton--Jacobi equations.

\quad We study the PI using a discrete space-time scheme, where both the space and time variables are discretized. 
In a given discrete space-time grid, \eqref{eq:a}--\eqref{eq:b} are well-defined and we do not need to worry about the measurability of $\alpha_n,\beta_n$ with respect to $(t,x)$.


\subsection{Discrete space-time schemes}
We start with the notations. 
Denote by $\bbN$ the set of all natural numbers, and $\bbZ$ as the set of all integers. 
For any $h>0$, we write $\bbZ^d_h=h\bbZ^d:=\{hz\,|\,z\in\bbZ^d\}$. 
Let $\bbR^d$ be the Euclidean space of dimension $d$ and $|\cdot|$ the Euclidean distance. 
Denote by $\mathcal{S}^d$ the set of all symmetric matrices of size $d\times d$.
For $A\in \mathcal{S}^d$, $\Tr(A)$ denotes the trace of matrix $A$. 
For $R>0$, by $B_R(x)$ and $\overline B_R(x)$ we mean the open ball and close ball  in $\bbR^d$ with center $x\in \bbR^d$ and radius $R$, respectively. 
We write $B_R=B_R(0)$ and $\overline B_R = \overline B_R(0)$.
For a vector field $f:\Omega\to \bbR^d$ where $\Omega\subset\bbR^l$ for some $l\geq 1$, we denote its infinity norm by $\|f\|_\infty:=\sup_\Omega |f(\cdot)|$. 
For a function $g:[0,T]\times \bbR^d\to \bbR$, the spatial gradient, the spatial Hessian are denoted as $\nabla g(t,x)=\nabla_x\, g(t,x)$, $D^2 g(t,x)= D^2_{xx} g(t,x)$, respectively, and the partial derivative with respect to time is denoted as $\partial_t g(t,x)$. 
If $g$ is bounded and uniformly Lipschitz continuous,  we denote by $\|g\|_{\rm Lip}$ the sum of $\|g\|_\infty$ and the Lipschitz constant of $g$.

\quad In this paper, we always assume that
\beq\lb{N1}
c(t,x,\alpha,\beta),\,f(t,x,\alpha,\beta),\,\sigma(t,x),\,g(x) \quad \text{ are uniformly bounded.} 
\eeq 
We consider the discrete scheme in space and time.
Let ${\tau,h}\in (0,1)$, and assume that $T/\tau\in\bbN$. Denote 
\[
{\bbN}^\tau_{T}:=\{0,\tau,2\tau,\cdots,T\},\quad \bbZ^d_h:=h\bbZ^d,\quad\text{and}\quad \Omega^{\tau,h}_{T}:={\bbN}^\tau_{T}\times \bbZ^d_h.
\]

\quad For any $\varphi:{\bbN}^\tau_{T}\times \bbR^d\to\bbR$ and $h\in\bbR \setminus \{0\}$, we use the notations
\[
\nabla^h \varphi(t,x):=\left(\frac{\varphi(t,x+he_1)-\varphi(t,x-he_1)}{2h},\cdots,\frac{\varphi(t,x+he_d)-\varphi(t,x-he_d)}{2h}\right),
\]
\[
D^h \varphi(t,x):=\left(D^h_i\varphi(t,x)\right)_{1\leq i\leq d}=\left(\frac{\varphi(t,x+he_1)-\varphi(t,x)}{h},\cdots,\frac{\varphi(t,x+he_d)-\varphi(t,x)}{h}\right),
\]
and
\[
\Delta^{h}_i\varphi(t,x):=\frac{\varphi(t,x+he_i)-2\varphi(t,x)+\varphi(t,x-he_i)}{h^2}.
\]
It is not hard to see that
\beq\lb{p.9}
\Delta^{h}_i\varphi(t,x)=-D_i^{-h}D_i^h\varphi=\frac{1}{h}(D_i^h\varphi+D_{-i}^h\varphi),
\eeq
\begin{equation*}
\nabla^h \varphi(t,x)=\frac{1}{2}\left(D^h \varphi(t,x)-D^{-h} \varphi(t,x)\right).
\end{equation*}
We also denote
\begin{align*}
D_{-i}^h\varphi(t,x):=D_{i}^{-h}\varphi(t,x),\quad 
\partial_t^\tau \varphi(t,x):=\frac{\varphi(t,x)-\varphi(t-\tau ,x)}{\tau}.
\end{align*}

\quad Now we discuss the discrete scheme. 
Given Lipschitz continuous functions $\alpha_0=\alpha_0(t,x),\beta_0=\beta_0(t,x)$, let $V_n^{\tau,h}: \Omega^{\tau,h}_{T}\to \bbR$ be defined iteratively for $n=0,1,\cdots$ as follows: 
\beq\lb{4.1}
\left\{
\begin{aligned}
&\partial_t^\tau V_n^{\tau,h} + L(t,x,\nabla^h V_n^{\tau,h} )( \alpha_n,\beta_n) =-\frac12\sum_{i=1}^d (\Sigma_i(t,x)+\nu_h )\Delta_i^h V_n^{\tau,h} &&\text{ in } \Omega^{\tau,h}_{T},\\
&V_n^{\tau,h}(T,\cdot) = g(\cdot) &&\text{ on }\bbZ^d_h,
\end{aligned}
\right.
\eeq
where $\Sigma_i$ is the $i$-th element of the diagonal matrix $\Sigma=\sigma\sigma^T$, and for $(t,x)\in \Omega^{\tau,h}_{T}$,
\begin{equation}
\lb{4.1'}
\begin{aligned}
\alpha_{n+1,b}(t,x)&\in \argmin_{a \in A} L(t,x,\nabla^h V^{\tau,h}_n(t,x))(a,b),\\
\beta_{n+1}(t,x)&\in \arg\max_{b \in B} L(t,x,\nabla^h V^{\tau,h}_n(t,x))(\alpha_{n+1,b}(t,x),b),\\
\alpha_{n+1}(t,x) &= \alpha_{n+1,\beta_{n+1}(t,x)}(t,x).    
\end{aligned}
\end{equation}
For $h\in (0,1)$, the constant $\nu_h\geq 0$ is to be selected
so that, for each $i=1,\ldots,d$,
\beq\lb{N2}
\left\{
\begin{aligned}
&\nu_h+\Sigma_{i}(t,x)\geq h\left| f_i(t,x,a,b)\right|\quad\text{ for all }(t,x,a,b),\\
&d  \nu_h+\sup_{(t,x)\in\Omega_T^{\tau,h}}\sum_{i=1}^d\Sigma_i(t,x) \leq h^2/\tau,
\end{aligned}
\right.
\eeq
which guarantees that the numerical Hamiltonian is monotone and, as a consequence of this, the comparison principle holds (see the discussion in Section \ref{sc2} and Lemma \ref{L.2.1}).

\quad We also consider the following equation
\beq\lb{4.2}
\left\{
\begin{aligned}
&\partial_t^\tau V^{\tau,h}+ H(t,x,\nabla^h V^{\tau,h}) = -\frac12\sum_{i=1}^d (\Sigma_i(t,x)+\nu_h )\Delta_i^h V^{\tau,h}\qquad &&\text{ in } \Omega^{\tau,h}_{T},\\
&V^{\tau,h}(T,\cdot) = g(\cdot)\qquad &&\text{ on }\bbZ^d_h.
\end{aligned}
\right.
\eeq
We note that both \eqref{4.1} and \eqref{4.2} are based on explicit schemes.
The goals of this paper are to show that $V_n^{\tau,h}$ converges to $V^{\tau,h}$ as $n\to\infty$ and $V^{\tau,h}$ converges to $v$ as $\tau,h\to 0$, where $v$ is given by \eqref{3.1}, and to obtain the corresponding convergence rates.

\subsection{Main results}
We first prove that $V_n^{\tau,h}$ converges to $V^{\tau,h}$ exponentially fast as $n\to \infty$. We need $\tau>0$ to be sufficiently small such that
\beq\lb{N4}
\left\{
\begin{aligned}
 &   d\tau \left(\max_{i=1,\ldots,d}\|\Sigma_i\|_\infty\right)\leq 4h^2,\\
 &  96\tau\left(\max\{\|c\|^2_\infty,2d\|f\|_\infty^2\}\right)\leq 
 \min_{i=1,\ldots,d} \inf_{(t,x)}\Sigma_i(t,x)+\nu_h.
\end{aligned}
\right.
\eeq

\begin{theorem}\lb{T.4.2}
Assume \eqref{N2} and \eqref{N4}. 
For $h\in (0,1)$, let $\Lambda^h>\lambda^h\geq \nu_h$ be such that
\[
0<\lambda^h\leq \Sigma^h_i=\Sigma_i+\nu_h\leq \Lambda^h\quad \text{ for each $i\in \{1,\ldots,d\}$}.
\]
Then, for all $n\geq 1$, 
\[
\begin{aligned}
\sup_{(t,x)\in \Omega^{\tau,h}_{T}}\left|V_n^{\tau,h}(t,x)-V^{\tau,h}(t,x)\right|^2 \leq C_h{2^{-n-1}}
\end{aligned}
\]
where $C_h:= e^{C_1T/\lambda^h}\left({12}\|g\|_\infty^2+T{\lambda^h}\right)$ and $C_1:=48\max\{\|c\|^2_\infty,2d\|f\|_\infty^2\}$.
\end{theorem}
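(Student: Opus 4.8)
The plan is to track the evolution of the squared error $e_n(t,x) := |V_n^{\tau,h}(t,x) - V^{\tau,h}(t,x)|^2$ backwards in time along the explicit scheme, and to show that a suitably weighted sum of $e_n$ contracts by a factor $1/2$ per policy-improvement step. First I would write down the equation satisfied by the difference $w_n := V_n^{\tau,h} - V^{\tau,h}$: subtracting \eqref{4.2} from \eqref{4.1} and using that $H(t,x,\nabla^h V^{\tau,h}) = L(t,x,\nabla^h V^{\tau,h})(\alpha_{n+1}^*, \beta_{n+1}^*)$ for the optimizing policies of $V^{\tau,h}$, while $L(t,x,\nabla^h V_n^{\tau,h})(\alpha_n,\beta_n) = H(t,x,\nabla^h V_{n-1}^{\tau,h})$ via \eqref{4.1'}, we get a linear discrete parabolic equation for $w_n$ with a source term controlled by the policy mismatch. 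The key structural inputs are the two-sided inequalities displayed after \eqref{eq:uiter} (the saddle-point sandwich): using the optimality of $(\alpha_n,\beta_n)$ for $\nabla^h V_{n-1}^{\tau,h}$ and comparing with the optimizers for $V^{\tau,h}$, the source term for $w_n$ can be bounded, in absolute value, by $C\,(|\nabla^h V_{n-1}^{\tau,h} - \nabla^h V^{\tau,h}|)$, i.e. by the gradient of $w_{n-1}$, since $L$ is affine in $p$ with $f,c$ bounded — this is where $\|c\|_\infty, \|f\|_\infty$ enter $C_1$.

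Next I would run a discrete energy estimate. Multiply the equation for $w_n$ by $\tau w_n$, sum over $x \in \bbZ^d_h$ (all sums are finite once one checks/uses that $w_n$ has the right decay, or works on the relevant compact set where $g$ and the data live), and sum over $t$ from $T$ down to the current time. The term $\tau w_n \partial_t^\tau w_n$ telescopes, with the boundary contribution vanishing because $w_n(T,\cdot)=0$. The crucial gain comes from the second-order term: $-\frac12 \sum_i \Sigma_i^h \Delta_i^h w_n$ paired against $w_n$ and summed by parts produces a good term $\frac12 \sum_i \Sigma_i^h |D_i^h w_n|^2 \geq \frac{\lambda^h}{2}\,|\nabla^h w_n|^2$-type quantity (here the ellipticity $\lambda^h \le \Sigma_i^h$ is used), while the CFL-type condition \eqref{N2} (second line) and \eqref{N4} (first line) are exactly what is needed to absorb the error created by the explicit time discretization of this dissipative term — i.e. to keep the scheme from losing the dissipation to the $\partial_t^\tau$ commutator. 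The source term is then estimated by Cauchy–Schwarz: $|\text{source}\cdot w_n| \le C_1 |\nabla^h w_{n-1}| |w_n| \le \frac{\lambda^h}{4}|\nabla^h w_{n-1}|^2 + \frac{C_1^2}{\lambda^h} |w_n|^2$, and the quadratic-in-$w_n$ term is handled by a discrete Grönwall inequality in $t$, producing the factor $e^{C_1 T/\lambda^h}$. The second condition in \eqref{N4} — the $96\tau \max\{\|c\|^2_\infty, 2d\|f\|_\infty^2\} \le \min_i \Sigma_i^h$ bound — is precisely the smallness needed so that the $\frac{\lambda^h}{4}|\nabla^h w_{n-1}|^2$ term carries a coefficient strictly less than $\frac12$ of the dissipation available at step $n-1$, yielding the $2^{-n}$ geometric decay rather than mere boundedness.

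Putting these together I would define a Lyapunov functional of the form $\Phi_n := \sup_t \big(\sum_x \tau |w_n|^2 + \text{(dissipation integral of }|\nabla^h w_n|^2)\big)$, or more simply track $M_n := \sup_{(t,x)} |w_n(t,x)|^2$ together with the dissipation, and establish a recursion $\Phi_n \le \tfrac12 \Phi_{n-1}$ after the Grönwall factor is accounted for, with the base case $\Phi_0 \le C_h$ coming from the a priori bound $\|V_0^{\tau,h}\|_\infty, \|V^{\tau,h}\|_\infty \le \|g\|_\infty + $ (controlled contribution from the bounded running cost), which is where the $12\|g\|_\infty^2 + T\lambda^h$ term in $C_h$ originates. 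Finally, $\sup|w_n|^2$ is dominated by $\Phi_n$ (after converting the $\ell^2$-in-$x$ energy bound to a pointwise bound via the maximum principle / monotonicity of the scheme from Lemma \ref{L.2.1}, or directly since the discrete equation for $w_n$ satisfies a comparison principle so $\sup|w_n|$ at time $t$ is controlled by the accumulated source), giving the stated estimate with the harmless extra $2^{-1}$.

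The main obstacle I expect is the interplay between the explicit time-stepping and the dissipation: one must show that $\partial_t^\tau w_n$ paired with the discrete Laplacian term does not destroy the parabolic gain, and this is exactly why the two CFL conditions in \eqref{N2} and \eqref{N4} are imposed. Concretely, the commutator $\sum_i \Sigma_i^h \Delta_i^h w_n \cdot (w_n(t) - w_n(t-\tau))$ generates a term proportional to $\tau \sum_i (\Sigma_i^h)^2 |\Delta_i^h w_n|^2$ that competes with the good term; bounding $|\Delta_i^h w_n| \lesssim h^{-1}|\nabla^h w_n|$-type inequalities and using $\tau \max_i \|\Sigma_i\|_\infty d \le 4h^2$ is the delicate accounting. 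The nonconvexity of $H$, notably, causes no trouble here because the saddle-point sandwich inequalities give one-sided (hence two-sided, after swapping $n$ and the optimizers of $V^{\tau,h}$) linear control of the Hamiltonian difference by the gradient difference regardless of convexity — this is the conceptual payoff of the policy-iteration structure.
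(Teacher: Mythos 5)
Your overall skeleton (difference equation for $w_n$, control of the Hamiltonian mismatch by gradients of the errors, CFL conditions to preserve dissipation under explicit stepping, geometric contraction) matches the spirit of the paper, but the engine you propose --- an $\ell^2$-in-$x$ energy estimate with summation by parts and discrete Gr\"onwall --- is the method of \cite{KSS20,TTZ}, and it runs into two genuine obstructions here. First, the sums $\sum_{x\in\bbZ^d_h}|w_n|^2$ are not finite: $g$, $c$, $f$ are merely bounded on all of $\bbR^d$, there is no compactness or decay of $V_n^{\tau,h}-V^{\tau,h}$, and your parenthetical ``works on the relevant compact set where $g$ and the data live'' has no basis in the hypotheses; one would need weighted norms plus a localization argument that you do not supply. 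Second, and more seriously, the theorem asserts a sup-norm bound with the constant $C_h= e^{C_1T/\lambda^h}(12\|g\|_\infty^2+T\lambda^h)$, and your conversion from the $\ell^2_x$ contraction to a pointwise bound is not justified: passing from $\ell^2$ on the mesh to $\ell^\infty$ costs a factor $h^{-d/2}$, the maximum principle of Lemma \ref{L.2.1} cannot be applied to an integrated quantity, and the alternative ``accumulated source'' route needs \emph{pointwise} control of $|\nabla^h w_{n-1}|$, which your energy functional only provides in an integrated sense. This is precisely where the paper's proof does its real work: it stays pointwise throughout, using the Bernstein-type inequality of Lemma \ref{L.3.1} for $\calL(u_n^2)$, the discrete exponential weight $\xi(t)=(1+\tau/\eps_0)^{t/\tau}$, and auxiliary functions $\psi_n$ solving $\calL\psi_n=-\xi(t-\tau)G_n$ with $G_n=\sum_j|D^h_j u_n|^2$, so that the maximum principle yields $\psi_n\le\tfrac12\psi_{n-1}$ and $\xi u_n^2\le \tfrac{\lambda}{12}\psi_{n-1}$, with $\psi_0$ bounded by a second Bernstein-type comparison (Lemma \ref{L.3.2}); no $L^2$ norms ever appear.

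Two smaller points. Your identity $L(t,x,\nabla^h V_n^{\tau,h})(\alpha_n,\beta_n)=H(t,x,\nabla^h V_{n-1}^{\tau,h})$ is wrong: \eqref{4.1'} gives $L(t,x,\nabla^h V_{n-1}^{\tau,h})(\alpha_n,\beta_n)=H(t,x,\nabla^h V_{n-1}^{\tau,h})$, and since the equation for $V_n^{\tau,h}$ evaluates $L$ at $\nabla^h V_n^{\tau,h}$, the correct bound (Lemma \ref{L.4.1}) on the source is $\|f\|_\infty\bigl(|\nabla^h w_n|+|\nabla^h w_{n-1}|\bigr)$, not a term in $w_{n-1}$ alone; the extra $|\nabla^h w_n|$ piece must be absorbed into the dissipation at step $n$, which is where the second line of \eqref{N4} is actually used. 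Also, your base-case claim that $\Phi_0\le C_h$ follows from the sup bounds of $V_0^{\tau,h}$ and $V^{\tau,h}$ is insufficient: one needs a bound on the accumulated (weighted) dissipation of $V_0^{\tau,h}$ and $V^{\tau,h}$, which in the paper is obtained by testing their equations through Lemma \ref{L.3.1} again (Lemma \ref{L.3.2}), not from boundedness alone. As written, the proposal therefore has a real gap at the $\ell^2\to\ell^\infty$ step and an unproven summability assumption, even though the saddle-point (nonconvexity-proof) control of the Hamiltonian difference and the role of the CFL conditions are correctly identified.
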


\begin{remark}
In the case of $\nu_h=Nh$, the conditions \eqref{N2} and \eqref{N4} are satisfied when $N\geq \|f\|_\infty$, and then $h^2/\tau$ is sufficiently large.

\quad If the diffusion matrix is non-degenerate, that is, $1/c_0\geq \Lambda^h>\lambda^h\geq c_0>0$ for some $c_0\in(0,1)$ independent of $h$ and $\tau$, then the constant $C_h$ in Theorem \ref{T.4.2} can be chosen to be independent of $h$.
In this case, the exponential convergence rate of PI obtained is independent of $h$ and $\tau$.

\quad To the best of our knowledge, Theorem \ref{T.4.2} provides the first exponential convergence result for PI of possibly nonconvex viscous Hamilton--Jacobi equations in the literature.  
    
\end{remark}

\quad Since the literature is vast, we will only mention the results on PI that are directly related to \eqref{4.1}, \eqref{4.2}, and \eqref{3.1}.
 The PI method was first used to study Markov decision processes in \cite{Howard60}.
PI for deterministic optimal control problems in continuous space-time was studied in the linear quadratic setting in \cite{Klein68, VPAL09}, under specific structures allowing solvability in \cite{AL05}, and under a fixed point assumption in \cite{LS21}.
In the general setting, the problem corresponds to a first-order convex Hamilton--Jacobi equation whose solution is only Lipschitz and not smooth, and thus, the selection step in the PI similar to \eqref{eq:uiter} is not well-posed.
To overcome this ill-posedness issue, a semidiscrete scheme with an added viscosity term via finite differences in space was studied in \cite{TTZ}. 
It was proved in \cite{TTZ} that the PI for the semidiscrete scheme converges exponentially fast and the error induced by the semidiscrete scheme was provided. 
The discrete space-time scheme was also analyzed in \cite{TTZ}. 
The PI in \cite{TTZ} was incorporated with deep operator network in \cite{LeeKim24} to solve the deterministic optimal control problem and the corresponding Hamilton--Jacobi equation.

\quad For stochastic control problems with uncontrolled diffusion, \cite{KSS20} showed that PI converges exponentially fast. 
The problem corresponds to a viscous Hamilton--Jacobi equation \eqref{3.1} with a non-degenerate diffusion matrix and a convex Hamiltonian.
See  \cite{DFX24, HWZ22, TZ23, TWZ} for the corresponding entropy-regularized problems, and \cite{CCG21, C22, CT22} for PI for mean field games.
We refer to \cite{Kawecki-Smears, Kawecki-Sprekeler} for the use of PI to solve discrete problems arising in the finite element approximation of uniformly elliptic Bellman--Isaacs equations.

\quad In our setting, we face two major difficulties.
First, as the diffusion matrix can be degenerate, \eqref{eq:uiter} is not well-posed.
To handle this, we consider the discrete space-time scheme with an added viscosity term via finite differences in space, which is similar to the approach in \cite{TTZ}.
Second, our Hamiltonian $H$ is not convex in $p$ in general, which causes major challenges in obtaining the exponential convergence of the PI.
In convex case,  due to policy improvement, the value functions are monotone i.e.,  $V^{\tau,h}_{n+1} \leq V^{\tau,h}_n$ for $n\in \bbN$. The monotonicity immediately implies the convergence of PI (see \cite{TTZ}).  
Then, exponential convergence rates in $L^2$ via the energy method were obtain in  \cite{KSS20,TTZ} under the assumption that the policies are unique and Lipschitz continuous in all of their variables. 
This assumption plays a crucial role in estimating the difference in coefficients in different iterations.

\quad Of course,  the monotonicity of the value functions does not carry over to our nonconvex problem. Moreover, we assume only \eqref{eq:uiter} (or \eqref{4.1'}) without imposing any conditions on the uniqueness or regularity of the policies.
To address these issues, we carefully control the drift terms and their differences throughout the iteration process. This control relies solely on the regularity assumptions of the coefficients (Lemma \ref{L.4.1}). Additionally, we employ the maximum principle and draw inspiration from the Bernstein method (Lemma \ref{L.3.1}), leveraging the diffusion term to establish the $L^\infty$-bounds in Theorem \ref{T.4.2}.
Notably, our approach is pointwise in nature and differs from those in \cite{KSS20,TTZ}.


\quad Since the policies $(\alpha_n,\beta_n)$ are not assumed to be unique, it is not possible to discuss the convergence of policies directly. Instead, we demonstrate the convergence through the Hamiltonian and the optimal value $V^{\tau,h}$.

\begin{corollary}\lb{C.2}
Assume the settings of Theorem \ref{T.4.2}.
We have, for $n\in \bbN$ and $i=1,\ldots,d$,
\[
\sup_{(t,x)\in \Omega^{\tau,h}_{T}}\left|D_i^h(V_n^{\tau,h}(t,x)-V^{\tau,h}(t,x))\right|^2 \leq  C_h h^{-2} 2^{1-n},
\]
\begin{align*}
\sup_{(t,x)\in \Omega^{\tau,h}_{T}}\left|{L}(t,x,\nabla^h V^{\tau,h})(\alpha_{n},\beta_{n})-H(t,x,\nabla^h V^{\tau,h})\right|^2\leq 16dC_h\|f\|_\infty^2   h^{-2} {2}^{1-n},
\end{align*} 
where $C_h= e^{C_1T/\lambda^h}\left({12}\|g\|_\infty^2+T\lambda^h\right)$ and $C_1=48\max\{\|c\|^2_\infty,2d\|f\|_\infty^2\}$.
\end{corollary}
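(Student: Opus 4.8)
The plan is to deduce all three estimates from the $L^\infty$-bound already established in Theorem \ref{T.4.2}, namely $\sup_{\Omega^{\tau,h}_T}|V_n^{\tau,h}-V^{\tau,h}|^2 \leq C_h 2^{-n-1}$. First, the discrete gradient estimate: since $D_i^h w(t,x) = \bigl(w(t,x+he_i)-w(t,x)\bigr)/h$, the triangle inequality gives $|D_i^h(V_n^{\tau,h}-V^{\tau,h})(t,x)| \leq \frac{2}{h}\sup_{\Omega^{\tau,h}_T}|V_n^{\tau,h}-V^{\tau,h}|$, so squaring yields $|D_i^h(V_n^{\tau,h}-V^{\tau,h})|^2 \leq \frac{4}{h^2}\cdot C_h 2^{-n-1} = C_h h^{-2} 2^{1-n}$, which is exactly the claimed bound. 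Note this only uses that the grid spacing is $h$ and the plain sup-norm control; no equation is needed here.

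Second, for the Hamiltonian estimate I would use the key inequality recorded after \eqref{4.1'}: with $p_n := \nabla^h V_n^{\tau,h}$ and $p := \nabla^h V^{\tau,h}$, the selection \eqref{4.1'} at step $n$ (applied to $V_{n-1}^{\tau,h}$, so $(\alpha_n,\beta_n)$ are optimal for the gradient $\nabla^h V_{n-1}^{\tau,h}$) together with the definition $H(t,x,p) = L(t,x,p)(\alpha(t,x,p),\beta(t,x,p))$ and the sandwiching $L(t,x,q)(\alpha_{n,b},b) \leq L(t,x,q)(\alpha_n,\beta_n) \leq L(t,x,q)(a,\beta_n)$ for $q = \nabla^h V_{n-1}^{\tau,h}$ lets me compare $L(t,x,p)(\alpha_n,\beta_n)$ with $H(t,x,p)$. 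The cleanest route: $H(t,x,p) \leq L(t,x,p)(\alpha_{n,\beta(t,x,p)},\beta(t,x,p))$ is false in general for a minimizer inside, so instead I would write the two-sided bound. On one side, $H(t,x,p) = \inf_a L(t,x,p)(a,\beta_H) \leq L(t,x,p)(\alpha_n,\beta_H) \le \dots$; on the other, $L(t,x,p)(\alpha_n,\beta_n)$ versus $H$. Ultimately the difference $L(t,x,p)(\alpha_n,\beta_n) - H(t,x,p)$ is controlled by $|p - p_{n-1}|$ times the Lipschitz-in-$p$ modulus of $L$, which is $|f|$: indeed $L(t,x,p)(a,b) - L(t,x,p_{n-1})(a,b) = (p - p_{n-1})\cdot f(t,x,a,b)$, uniformly in $(a,b)$, and $(\alpha_n,\beta_n)$ realize $H(t,x,p_{n-1})$ exactly. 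Hence
\[
\bigl|L(t,x,p)(\alpha_n,\beta_n) - H(t,x,p)\bigr| \leq 2\|f\|_\infty\,|p - p_{n-1}| = 2\|f\|_\infty\,|\nabla^h V^{\tau,h} - \nabla^h V_{n-1}^{\tau,h}|.
\]
The factor $2$ comes from bounding both $|L(t,x,p)(\alpha_n,\beta_n) - L(t,x,p_{n-1})(\alpha_n,\beta_n)|$ and $|H(t,x,p) - H(t,x,p_{n-1})|$ (the latter since $H$ is also $\|f\|_\infty$-Lipschitz in $p$) by $\|f\|_\infty|p-p_{n-1}|$ and using $L(t,x,p_{n-1})(\alpha_n,\beta_n) = H(t,x,p_{n-1})$.

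Third, I would estimate $|\nabla^h V^{\tau,h} - \nabla^h V_{n-1}^{\tau,h}|$. Writing $\nabla^h w = \frac12(D^h w - D^{-h}w)$, each component of $\nabla^h(V^{\tau,h}-V_{n-1}^{\tau,h})$ is bounded by $\frac{1}{2}(|D_i^h(\cdots)| + |D_i^{-h}(\cdots)|) \leq \frac{2}{h}\sup|V_{n-1}^{\tau,h}-V^{\tau,h}|$, so $|\nabla^h(V^{\tau,h}-V_{n-1}^{\tau,h})|^2 \leq \frac{4d}{h^2}\sup|V_{n-1}^{\tau,h}-V^{\tau,h}|^2 \leq \frac{4d}{h^2}C_h 2^{-n}$. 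Plugging in, $|L(t,x,\nabla^h V^{\tau,h})(\alpha_n,\beta_n) - H(t,x,\nabla^h V^{\tau,h})|^2 \leq 4\|f\|_\infty^2 \cdot \frac{4d}{h^2}C_h 2^{-n} = 16 d C_h \|f\|_\infty^2 h^{-2} 2^{-n}$, and since $2^{-n} = \tfrac12\cdot 2^{1-n}$ this is $\le 16 d C_h \|f\|_\infty^2 h^{-2} 2^{1-n}$ (actually $8dC_h\|f\|_\infty^2 h^{-2}2^{1-n}$, but the stated bound with constant $16$ certainly holds; I would double-check whether the intended indexing uses $V_n$ versus $V_{n-1}$ and adjust the power of $2$ accordingly, the statement being stated for general $n$).

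The main obstacle I anticipate is the bookkeeping in the second step: correctly exploiting that $(\alpha_n,\beta_n)$ are \emph{exact} optimizers for the frozen gradient $\nabla^h V_{n-1}^{\tau,h}$ (not for $\nabla^h V^{\tau,h}$), and checking that the sup-inf structure of $H$ in \eqref{3.1'} — which is only Lipschitz in $p$ with constant $\|f\|_\infty$ because the perturbation $(p-p_{n-1})\cdot f$ is uniform in $(a,b)$ and sup/inf of functions within a uniform distance stay within that distance — gives the clean one-Lipschitz-constant bound without any regularity of the selections $\alpha_b,\beta$ in $(t,x)$. Everything else is the elementary translation of $L^\infty$-control of the functions into $L^\infty$-control of their discrete difference quotients, losing a factor $h^{-1}$ each time.
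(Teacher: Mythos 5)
Your proposal is correct and follows essentially the same route as the paper: both arguments convert the sup-norm bound of Theorem \ref{T.4.2} into discrete-gradient bounds at the cost of a factor $h^{-1}$, and control the Hamiltonian discrepancy via the $\|f\|_\infty$-Lipschitz dependence of $L$ and $H$ on $p$ together with the exact optimality $L(t,x,\nabla^h V_{n-1}^{\tau,h})(\alpha_n,\beta_n)=H(t,x,\nabla^h V_{n-1}^{\tau,h})$. The only difference is the intermediate point in the triangle inequality: the paper inserts $L(t,x,\nabla^h V_n^{\tau,h})(\alpha_n,\beta_n)$ and invokes Lemmas \ref{L.4.1} and \ref{L.2.3}, getting a bound involving both $|\nabla^h(V_n^{\tau,h}-V^{\tau,h})|$ and $|\nabla^h(V_{n-1}^{\tau,h}-V^{\tau,h})|$, while you compare directly at $\nabla^h V_{n-1}^{\tau,h}$ and obtain the marginally sharper bound $2\|f\|_\infty|\nabla^h(V_{n-1}^{\tau,h}-V^{\tau,h})|$ — a cosmetic variant that still lands within the stated constant.
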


\quad Next, we obtain the convergence rate of \( V^{\tau,h} \) to \( v \) as $h\to 0$.

\begin{theorem}\lb{T.4.3}
Assume \eqref{N2}. 
Assume further that $c(t,x,\alpha,\beta),\,f(t,x,\alpha,\beta),\,\sigma(t,x)$ are uniformly Lipschitz in $(t,x)$, and $\|g\|_{C^3}<\infty$.
Let \( V^{\tau,h} \) and \( v \) be the solutions to \eqref{4.2} and \eqref{3.1}, respectively. 
Fix any $\alpha\in(0,1)$.
\begin{itemize}
    \item[(a)] If the diffusion matrix $\Sigma$ is non-degenerate, that is, $1/c_0\geq \Sigma_i\geq c_0$ for some $c_0\in(0,1)$ independent of $h$ and $\tau$ and for all $i=1,\ldots, d$, then 
\[
\sup_{(t,x)\in \Omega^{\tau,h}_T} |V^{\tau,h}(t,x)-v(t,x)|\leq
Ch^{\alpha/2}.
\]
Here, $C>0$ depends only on $d$, $\alpha$, $c_0$, $\|g\|_{C^3}$,  $\|c\|_{\rm Lip}$, $\|f\|_{\rm Lip}$,  and $\|\sigma\|_{\rm Lip}$.

\item[(b)] If $\Sigma$ is degenerate, that is, $\min_{1\leq i\leq d} \min_{(t,x)} \Sigma_i(t,x)=0$, then
\[
\sup_{(t,x)\in \Omega^{\tau,h}_T} |V^{\tau,h}(t,x)-v(t,x)|\leq
Ch^{2\alpha/({9+7\alpha})}.
\]
Here, $C>0$ depends only on $d$, $\alpha$, $\|g\|_{C^3}$,  $\|c\|_{\rm Lip}$, $\|f\|_{\rm Lip}$,  and $\|\sigma\|_{\rm Lip}$.
\end{itemize}

\end{theorem}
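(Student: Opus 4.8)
## Proof Proposal for Theorem \ref{T.4.3}

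The plan is to estimate $|V^{\tau,h} - v|$ by comparing both quantities against suitable regularized versions, using the monotonicity of the explicit scheme (from Lemma \ref{L.2.1} / the discussion in Section \ref{sc2}) as the engine for the comparison, and handling the loss of regularity of $v$ via a vanishing-viscosity-type doubling-of-variables argument. First I would establish a priori regularity for $v$: under the stated Lipschitz hypotheses on $c,f,\sigma$ and $\|g\|_{C^3}<\infty$, the solution $v$ of \eqref{3.1} is Lipschitz in $(t,x)$ uniformly, with Lipschitz constant depending only on the data norms (and, in case (a), one gets better, e.g.\ semiconcavity or $C^{1,\alpha}$-type bounds from the non-degenerate diffusion). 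These bounds are the input to all subsequent steps.

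Next I would run the standard consistency-plus-monotonicity scheme error analysis in the spirit of Barles--Souganidis / Krylov--Barles--Jakobsen, but adapted to the space-time discrete scheme \eqref{4.2}. The key is that the numerical Hamiltonian in \eqref{4.2} is monotone under \eqref{N2}, so the scheme obeys a comparison principle (Lemma \ref{L.2.1}). One then plugs a smooth test function — obtained by mollifying $v$ at scale $\eps$ — into the scheme and measures the residual: the time-difference $\partial_t^\tau$ contributes an $O(\tau \|\partial_{tt} v_\eps\|_\infty) = O(\tau/\eps)$ error, the centered gradient $\nabla^h$ contributes $O(h^2 \|D^3 v_\eps\|_\infty)=O(h^2/\eps^2)$ inside $H$ (using that $H$ is Lipschitz in $p$), and the second-difference $\Delta_i^h$ contributes $O(h^2\|D^4 v_\eps\|_\infty)=O(h^2/\eps^3)$ plus the artificial-viscosity term $\tfrac12 \nu_h \sum_i \Delta_i^h v_\eps = O(\nu_h \|D^2 v_\eps\|_\infty) = O(\nu_h/\eps)$; mollification itself costs $O(\eps)$ against the Lipschitz bound on $v$. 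Feeding these residuals through the comparison principle for \eqref{4.2} yields
\[
\sup_{\Omega^{\tau,h}_T}|V^{\tau,h}-v| \;\le\; C\Big(\eps + \frac{\tau}{\eps} + \frac{h^2}{\eps^2} + \frac{h^2}{\eps^3} + \frac{\nu_h}{\eps}\Big),
\]
and in case (a) one can exploit the non-degeneracy ($\Sigma_i \ge c_0$) to absorb the bad $h^2/\eps^3$ term by a Bernstein-type / energy gain from the genuine diffusion, improving the balance. Optimizing $\eps$ — roughly $\eps\sim h^{1-\delta}$ after using $\tau\lesssim h^2$ from \eqref{N2} and $\nu_h\lesssim h$ — then gives the rate $h^{\alpha/2}$ in case (a); in the degenerate case (b) the term $h^2/\eps^3$ cannot be absorbed and a cruder balance (together with the coupling $\tau \lesssim h^2/d$ and $\nu_h\sim$ multiple of $h$ forced by \eqref{N2}) produces the weaker exponent $2\alpha/(9+7\alpha)$; the free parameter $\alpha\in(0,1)$ enters because one only gets \emph{almost} the optimal balance (the $\eps$ and higher-order terms are matched up to an arbitrarily small polynomial loss).

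The main obstacle I anticipate is the degenerate case (b): without ellipticity there is no smoothing, so $v$ is merely Lipschitz and the $h^2/\eps^3$ residual from the fourth-difference acting on the mollified test function is genuinely present and dominant; one must either use a one-sided (sub/super) mollification combined with the semiconvexity produced by the degenerate-but-nonnegative $\Sigma+\nu_h$ (which still gives a tiny bit of regularization at scale governed by $\nu_h$), or a Krylov-type "shaking the coefficients" argument that builds smooth sub/supersolutions at two different scales for the two one-sided estimates. Tracking the dependence of all constants only on $d,\alpha$ and the Lipschitz norms of the data (as claimed in the statement), and checking that \eqref{N2} is compatible with the chosen scaling $\nu_h\sim h$, $\tau\sim h^2$, are the bookkeeping points that need care but should be routine once the skeleton above is in place.
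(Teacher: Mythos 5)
There is a genuine gap at the heart of your argument: the consistency step. You mollify $v$ itself at scale $\eps$ and assert that plugging $v_\eps$ into the scheme leaves only the finite-difference truncation errors plus an $O(\eps)$ mollification cost. But $v$ is merely a Lipschitz \emph{viscosity} solution of a nonconvex, possibly degenerate equation, so $v_\eps$ is not an approximate sub/supersolution with any quantitative rate: the commutation error between $H(t,x,\nabla v_\eps)$ and the mollification of $H(\cdot,\cdot,\nabla v)$ is controlled only by the oscillation of $\nabla v$ at scale $\eps$, which is $O(1)$ when all you know is $\nabla v\in L^\infty$; and comparing $\Sigma_i(t,x)\partial^2_{x_i}v_\eps$ with the mollified diffusion term needs either second-derivative control of $v$ or an integration-by-parts argument that requires a H\"older modulus for $\nabla v$ --- neither of which the degenerate equation provides. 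In the convex case this is exactly what Krylov's shaking-the-coefficients repairs (mollification of subsolutions remains a subsolution by Jensen's inequality), but convexity is precisely what is missing here, and your fallback suggestions for case (b) (one-sided mollification, shaking) rely on it. A telltale symptom: your residual balance $\eps+\tau/\eps+h/\eps+h^2/\eps^3+\nu_h/\eps$ with $\nu_h\sim h$, $\tau\lesssim h^2$ would yield $O(h^{1/2})$ even in the degenerate nonconvex case, which is stronger than the theorem claims and than anything known; the exponent $2\alpha/(9+7\alpha)$ cannot emerge from your accounting, which signals that an essential cost has been dropped.

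The paper avoids this by regularizing the \emph{equation}, not just the solution: it introduces $v^{\delta,\eps}$ solving \eqref{9.1} with diffusion $\Sigma_i+\delta$ (taking $\delta=\nu_h$ to match the scheme's numerical viscosity, or $\delta=c_0$ in case (a)), proves a quantitative space-time H\"older bound $\|\nabla v^{\delta,\eps}\|_{C^\alpha}\leq C\delta^{-(1+\alpha)}$ by rescaled uniformly parabolic estimates (Lemma \ref{L.9.3}), then mollifies $v^{\delta,\eps}$ and bounds the commutator errors by $C\delta^{-(1+\alpha)}\eps^\alpha$ (Proposition \ref{P.9.6}), compares the mollification with $V^{\tau,h}$ via the discrete comparison principle (Corollary \ref{C.4.5}), and finally pays $C(\delta^{1/2}+\eps)$ for $|v-v^{\delta,\eps}|$ through a Crandall--Ishii doubling-of-variables argument (Proposition \ref{P.5.4}). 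Balancing $\nu_h^{-(1+\alpha)}\eps^\alpha$, $h^2\eps^{-3}$, $h\eps^{-1}$ and $\nu_h^{1/2}$ is what produces the rates in (a) and (b). Your proposal contains neither the $\delta^{-(1+\alpha)}\eps^\alpha$ commutation cost nor the $\nu_h^{1/2}$ vanishing-viscosity cost, so as written it cannot be completed; if you want to salvage it, you must insert the intermediate viscous approximation (or an equivalent mechanism giving a quantitative modulus for $\nabla v$) before mollifying.
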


\begin{remark}

If the diffusion matrix $\Sigma$ is non-degenerate, then the convergence rate is  $O(h^{\alpha/2})$ for each $\alpha \in (0,1)$, which is close to the optimal convergence rate $O(h^{1/2})$ for convex equations in \cite{10DonKry,DonKry,18Kry}.

\quad And in the case when $\Sigma$ is degenerate, the convergence rate is $O\left(h^{2\alpha/({9+7\alpha})}\right)$.
This convergence rate is close to $O(h^{1/8})$ as $\alpha \to 1$.

\quad  To the best of our knowledge, the convergence results in Theorem \ref{T.4.3} are new in the literature.
\end{remark}

\quad Quantitative convergence results of discrete space-time schemes for Hamilton--Jacobi equations is a popular topic, and we will only mention the results that are directly related to \eqref{4.2} and \eqref{3.1}.
For first-order equations, the optimal convergence rate $O(h^{1/2})$ was obtained in \cite{crandall1984two}.
For second-order equations, the problem becomes much more complicated because of the appearance of the diffusion term.
In a way, the main challenge is the lack of appropriate regularizations of viscosity solutions yielding control on derivatives higher than two.
For convex equations which can be degenerate, we refer the reader to \cite{Barles-Jakobsen,10DonKry,DonKry,18Kry}, in which the optimal convergence rate $O(h^{1/2})$ was proved.
For fully nonlinear uniformly elliptic/parabolic equations which can be nonconvex, algebraic convergence rates $O(h^\theta)$ were obtained in \cite{Caffarelli-Souganidis,turanova2015error, Turanova}.
For general fully nonlinear equations which can be both degenerate and nonconvex, the problem remains largely open (see \cite{Jakobsen} for a special case in one dimension). 
We also mention that for a nonlocal Isaacs equation, \cite{biswas2019rate} established the convergence with rate depending on the nonlocal kernel, using the method of doubling variables in \cite{crandall1984two}.

\quad In our setting, \eqref{3.1} is possibly both degenerate and nonconvex, but is linear in the second-order term, which is hence simpler than the most general fully nonlinear equations.
To overcome the lack of the bounds of derivatives higher than two of the viscosity solutions, we consider the approximate equation \eqref{9.1} to \eqref{3.1} and obtain bounds on the H\"older norm of the gradient of the solution $v^{\delta,\eps}$.
We then perform a convolution regularization of $v^{\delta,\eps}$ to get $u^\delta_\eps$ and have the bounds of derivatives higher than two of $u^\delta_\eps$.
This allows us to compare $u^\delta_\eps$ to $V^{\tau,h}$, the solution of \eqref{4.2}, which is the main step to prove Theorem \ref{T.4.3}.

\subsection*{Organization of the paper}
In Section \ref{sc2}, we provide some preliminaries on the monotonicity of the discrete space-time schemes, the comparison principle, and some estimates.
The proof of Theorem \ref{T.4.2} is given in Section \ref{S3}.
The convergence of the discrete equations and the proof of Theorem \ref{T.4.3} are given in Section \ref{S4}.

\section{Preliminaries}
\label{sc2}

\quad We are concerned with the monotonicity of the discrete space-time schemes.
We will use the following operator. For each $t\in 
{\bbN}^\tau_{T}$ and $(a,b)\in A\times B$, let $\calF_t^{a,b}: L^\infty(\bbZ_h^d)\to L^\infty(\bbZ_h^d)$ be defined as
\[
\calF_t^{a,b}(U)(x):=U(x)+\tau L(t,x,\nabla^h U(x))(a,b)+\frac\tau2\sum_{i=1}^d(\Sigma_i+\nu_h) \Delta^{h}_i U(x).
\]
Then the equation in \eqref{4.1} can be rewritten as
\[
V^{\tau,h}_n(t-\tau,x)=\calF_t^{\alpha_n,\beta_n}(V_n^{\tau,h}(t,\cdot))(x)
\]
or, more precisely,
\begin{align*}
V^{\tau,h}_n(t-\tau,x)
&=\tau c^n(t,x)+V^{\tau,h}_n(t,x)\left[1-\tau h^{-2}\sum_{i=1}^d\left(\Sigma_i(t,x)+\nu_h\right)\right]\\ 
&\quad+ \frac{\tau}{2}\sum_{i=1}^d {V^{\tau,h}_n(x+he_i)}\Big[ hf_i^n(t,x)+\Sigma_{i}(t,x)+\nu_h\Big]\\
&\quad + \frac{\tau}{2}\sum_{i=1}^d {V^{\tau,h}_n(x-he_i)}\Big[-hf_i^n(t,x)+\Sigma_{i}(t,x)+\nu_h\Big]
\end{align*}
where
\[
f_i^n(t,x):=f_i(t,x,\alpha_n,\beta_n).
\]

The above formula shows that if \eqref{N2} holds, that is, for each $i=1,\ldots,d$,
\begin{equation*}
\left\{
\begin{aligned}
&\nu_h+\Sigma_{i}(t,x)\geq h\left| f_i(t,x,a,b)\right|\quad\text{ for all }(t,x,a,b),\\
&d  \nu_h+\sup_{(t,x)\in\Omega_T^{\tau,h}}\sum_{i=1}^d\Sigma_i(t,x) \leq h^2/\tau,
\end{aligned}
\right.
\end{equation*}
then the following monotonicity formula holds. 
For all $t\in{\bbN}^\tau_{T}$, $a,b\in A\times B$ and $U,V\in L^\infty(\bbZ_h^d)$  satisfying $U\leq V$,
\[
\calF^{a,b}_t(U)\leq \calF^{a,b}_t(V).
\]
We also refer readers to  \cite{Barles-Souganidis, crandall1984two, osher1991, tranbook}.

\quad Similarly, if we define
\beq\lb{calF}
\calF_t(U)(x):=U(x)+\tau H(t,x,\nabla^h U(x))+\frac\tau2\sum_{i=1}^d(\Sigma_i+\nu_h) \Delta^{h}_i U(x),
\eeq
then $V^{\tau, h}$, solving \eqref{4.2}, satisfies
\[
V^{\tau,h}(t-\tau,x)=\calF_t(V^{\tau,h}(t,\cdot))(x)
\]
and
\begin{equation*}
\calF_t(U)\leq \calF_t(V)\quad\text{ whenever }\quad U\leq V.
\end{equation*}

\begin{definition}
We say that $V$ is a supersolution (resp. subsolution) to \eqref{4.1} or \eqref{4.2} if it satisfies \eqref{4.1} or \eqref{4.2} with the first equality replaced by $\leq$ (resp. $\geq$) and the second equality replaced by $\geq$ (resp. $\leq$).   
\end{definition}

The following is the comparison principle in this discrete space-time setting.

\begin{lemma}\lb{L.2.1}
Assume \eqref{N2}.
Let $V^{\tau,h}$ and $\tilde{V}^{\tau,h}$ be, respectively, a bounded supersolution and subsolution to \eqref{4.1} with $n=0$ or \eqref{4.2}. 
Then $\tilde{V}^{\tau,h}\leq {V}^{\tau,h}$ in $\Omega_T^{\tau,h}$.  
\end{lemma}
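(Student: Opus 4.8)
The plan is to establish the comparison principle by a standard backward induction in time, exploiting the monotonicity of the one-step update operators $\calF_t^{a,b}$ and $\calF_t$ that were just verified under hypothesis \eqref{N2}. The key observation is that both schemes \eqref{4.1} with $n=0$ and \eqref{4.2} are explicit: the value at time $t-\tau$ is completely determined by the values at time $t$ through an application of $\calF_t^{\alpha_0,\beta_0}$ or $\calF_t$, and these operators are order-preserving on $L^\infty(\bbZ_h^d)$.

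First I would set $W:=\tilde V^{\tau,h}-V^{\tau,h}$ and prove by induction on $k=0,1,\ldots,T/\tau$ that $W(T-k\tau,\cdot)\leq 0$ on $\bbZ_h^d$. The base case $k=0$ is the terminal condition: since $\tilde V^{\tau,h}$ is a subsolution and $V^{\tau,h}$ a supersolution, $\tilde V^{\tau,h}(T,\cdot)\leq g(\cdot)\leq V^{\tau,h}(T,\cdot)$. For the inductive step, suppose $\tilde V^{\tau,h}(t,\cdot)\leq V^{\tau,h}(t,\cdot)$ for some $t=T-k\tau$. Consider the case of \eqref{4.2} (the case \eqref{4.1} with $n=0$ being identical with $\calF_t^{\alpha_0,\beta_0}$ in place of $\calF_t$). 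The subsolution property gives $\tilde V^{\tau,h}(t-\tau,x)\leq \calF_t(\tilde V^{\tau,h}(t,\cdot))(x)$ and the supersolution property gives $V^{\tau,h}(t-\tau,x)\geq \calF_t(V^{\tau,h}(t,\cdot))(x)$; combining these with the monotonicity $\calF_t(\tilde V^{\tau,h}(t,\cdot))\leq \calF_t(V^{\tau,h}(t,\cdot))$ yields $\tilde V^{\tau,h}(t-\tau,\cdot)\leq V^{\tau,h}(t-\tau,\cdot)$, completing the induction. Since every point of $\Omega_T^{\tau,h}$ has time coordinate of the form $T-k\tau$ with $0\leq k\leq T/\tau$ (using $T/\tau\in\bbN$), this gives $\tilde V^{\tau,h}\leq V^{\tau,h}$ on all of $\Omega_T^{\tau,h}$.

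One point deserving care is the definition of supersolution/subsolution for \eqref{4.1}: the operator $\calF_t^{a,b}$ used to rewrite the scheme depends on the policies $(\alpha_n,\beta_n)$, so the monotonicity and the rewriting are only immediately valid for a fixed policy pair. This is precisely why the statement restricts to $n=0$, where the policies $\alpha_0,\beta_0$ are the given fixed functions and are common to both the sub- and supersolution; the same single operator $\calF_t^{\alpha_0,\beta_0}$ then applies to both $\tilde V^{\tau,h}$ and $V^{\tau,h}$, and the argument goes through verbatim. For \eqref{4.2}, the Hamiltonian $H$ is intrinsic, so $\calF_t$ is unambiguous and no such subtlety arises. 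I would also note that boundedness of the sub- and supersolutions is used only to ensure $\calF_t(U)$, $\calF_t^{a,b}(U)$ are well-defined elements of $L^\infty(\bbZ_h^d)$, so that the finite-difference expressions and the inequalities make sense at every grid point.

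I do not anticipate a genuine obstacle here: the proof is a clean finite-horizon induction and the only real work — establishing monotonicity of $\calF_t^{a,b}$ and $\calF_t$ under \eqref{N2} — has already been carried out in the discussion preceding the lemma. The mild bookkeeping concern is making explicit that the two hypotheses in \eqref{N2} are exactly what force the coefficients $hf_i^n+\Sigma_i+\nu_h$, $-hf_i^n+\Sigma_i+\nu_h$, and $1-\tau h^{-2}\sum_i(\Sigma_i+\nu_h)$ appearing in the explicit update formula to be nonnegative, so that $\calF_t$ is a monotone (indeed, a convex-combination-type) operator; but this has been recorded above, so the proof of the lemma can simply invoke it.
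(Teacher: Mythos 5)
Your proof is correct and follows essentially the same route as the paper: backward induction in time from the terminal condition, combining the super/subsolution inequalities with the monotonicity of the explicit one-step operators $\calF_t$ (or $\calF_t^{\alpha_0,\beta_0}$) guaranteed by \eqref{N2}. No gaps.
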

\begin{proof}
Let us only consider the case for equation \eqref{4.2}.  By the assumption,
\[
\tilde V^{\tau,h}(T,\cdot)\leq V^{\tau,h}(T,\cdot).
\]
Let $\calF_t$ from \eqref{calF} and then by the definitions of supersolution and subsolution,
\[
V^{\tau,h}(t-\tau,x)\geq \calF_t(V^{\tau,h}(t,\cdot))(x)\quad\text{and}\quad \tilde V^{\tau,h}(t-\tau,x)\leq \calF_t(\tilde V^{\tau,h}(t,\cdot))(x).
\]
These inequlities and the monotonicity property of $\calF_t$ yield
\[
\tilde V^{\tau,h}(T-\tau,x)\leq \calF_T(\tilde V^{\tau,h}(T,\cdot))(x)\leq \calF_T(V^{\tau,h}(T,\cdot))(x)\leq V^{\tau,h}(T-\tau,x).
\]
Thus, by induction, we obtain for all $k\in \{1,\ldots,T/\tau-1\}$ that
\begin{align*}
\tilde V^{\tau,h}(T-(k+1)\tau,x)&\leq \calF_{T-k\tau}(\tilde V^{\tau,h}({T-k\tau},\cdot))(x)\\
&\leq \calF_{T-k\tau}(V^{\tau,h}({T-k\tau},\cdot))(x)\leq V^{\tau,h}(T-(k+1)\tau,x).
\end{align*}
\end{proof}

\quad Of course, the monotonicity property and the corresponding maximum principle play a crucial role in our analysis.


\medskip

\quad The following lemma proves that $V^{\tau,h}, V^{\tau,h}_n$ are uniformly bounded. 

\begin{lemma}
Assume \eqref{N2}. Let $V_n^{\tau,h}$ solve \eqref{4.1}--\eqref{4.1'} and $V^{\tau,h}$ solve \eqref{4.2}. Then in the domain of $\Omega^{\tau,h}_T$, $V^{\tau,h}$ and $V_n^{\tau,h}$ for all $n\geq 0$ are uniformly bounded by $\|g\|_\infty+\|c\|_\infty T$.
\end{lemma}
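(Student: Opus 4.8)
The plan is to bound $V^{\tau,h}$ and $V_n^{\tau,h}$ by comparison (Lemma~\ref{L.2.1}) against the spatially constant, affine-in-time barriers
\[
\psi^{\pm}(t,x):=\pm\bigl(\|g\|_\infty+\|c\|_\infty(T-t)\bigr),\qquad (t,x)\in\Omega^{\tau,h}_{T}.
\]
Since $0\le T-t\le T$ for $t\in\bbN^\tau_{T}$, one has $|\psi^{\pm}|\le \|g\|_\infty+\|c\|_\infty T$ on all of $\Omega^{\tau,h}_{T}$, so it suffices to prove $\psi^{-}\le V^{\tau,h}\le \psi^{+}$ and $\psi^{-}\le V_n^{\tau,h}\le \psi^{+}$.

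The first step is to verify that $\psi^{+}$ is a supersolution and $\psi^{-}$ a subsolution of \eqref{4.2}, and of \eqref{4.1} for every $n$. Because $\psi^{\pm}$ do not depend on $x$, we have $\nabla^h\psi^{\pm}\equiv 0$ and $\Delta_i^h\psi^{\pm}\equiv 0$, so every spatial term in the schemes vanishes, while $\partial_t^\tau\psi^{+}\equiv -\|c\|_\infty$ and $\partial_t^\tau\psi^{-}\equiv \|c\|_\infty$. Using $L(t,x,0)(a,b)=c(t,x,a,b)$ and $H(t,x,0)=\sup_{b}\inf_{a}c(t,x,a,b)$, both of which have absolute value at most $\|c\|_\infty$ by \eqref{N1}, the left-hand side of \eqref{4.2} evaluated at $\psi^{+}$ equals $-\|c\|_\infty+H(t,x,0)\le 0$ (and that of \eqref{4.1} equals $-\|c\|_\infty+c(t,x,\alpha_n,\beta_n)\le 0$), with the reverse inequalities for $\psi^{-}$; moreover the terminal values satisfy $\psi^{+}(T,\cdot)=\|g\|_\infty\ge g\ge -\|g\|_\infty=\psi^{-}(T,\cdot)$. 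Hence $\psi^{+}$ (resp. $\psi^{-}$) is a supersolution (resp. subsolution) of both schemes.

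It then remains to invoke the comparison principle to obtain $\psi^{-}\le V^{\tau,h}\le\psi^{+}$ and $\psi^{-}\le V_n^{\tau,h}\le\psi^{+}$, which completes the proof. I do not expect a genuine obstacle here; the only small point is that Lemma~\ref{L.2.1} is stated for \eqref{4.1} with $n=0$, whereas we need it for every $n$. This is harmless: the proof of Lemma~\ref{L.2.1} uses only the monotonicity of $\calF^{\alpha_n,\beta_n}_t$, which holds under \eqref{N2} for each fixed pair $(\alpha_n,\beta_n)$ on the grid, so the same backward induction over $t=T,T-\tau,\dots$ applies verbatim. Alternatively one can bypass the comparison principle: under \eqref{N2} the update $\calF^{a,b}_t(U)(x)$ recalled in Section~\ref{sc2} is $\tau\,c(t,x,a,b)$ plus a weighted average of $U(x)$ and the values $U(x\pm he_i)$ with nonnegative weights summing to one, whence $\|V_n^{\tau,h}(t-\tau,\cdot)\|_\infty\le \|V_n^{\tau,h}(t,\cdot)\|_\infty+\tau\|c\|_\infty$, and iterating from $t=T$ (where $V_n^{\tau,h}=g$) gives the bound; the same argument applies to $V^{\tau,h}$ through $\calF_t$.
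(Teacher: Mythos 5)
Your proof is correct and follows the same route as the paper: the constant-in-space barriers $\pm(\|g\|_\infty+\|c\|_\infty(T-t))$ are verified to be a supersolution and a subsolution of \eqref{4.1} and \eqref{4.2}, and the comparison principle of Lemma~\ref{L.2.1} gives the bound. Your remark that the comparison argument applies verbatim for every $n$ (not just $n=0$), and your alternative weighted-average iteration, are both valid but not needed beyond what the paper does.
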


\begin{proof}
First we prove the boundedness of $V_n^{\tau,h}$. 
Since $c$ and $g$ are uniformly bounded, 
\[\pm \left[\|g\|_\infty+\|c\|_\infty(T-t)\right]
\]
are a supersolution and a subsolution to \eqref{4.1} for any $n\geq 1$, respectively. Thus the comparison principle (Lemma \ref{L.2.1}) implies that
\[
-\|g\|_\infty-\|c\|_\infty(T-t)\leq V^{\tau,h}_n(t,x)\leq \|g\|_\infty+\|c\|_\infty(T-t).
\]
By the same argument, the same estimate holds if we replace $V^{\tau,h}_n$ by $V^{\tau,h}$.
\end{proof}

\quad The following lemma concerns the regularity property of $H$.

\begin{lemma}\lb{L.2.3}
For any $(t_i,x_i)\in\Omega_T^{\tau,h}$ and $p_i\in \bbR^d$ with $i=1,2$, we have
\begin{align*}
&\left|H(t_1,x_1,p_1)-H(t_2,x_2,p_2)\right|\\
&\qquad\leq \left(\|c\|_{\Lip}+\|f\|_{\Lip}\min\{|p_1|,|p_2|\}\right)(|t_2-t_1|+|x_2-x_1|)+\|f\|_\infty|p_1-p_2|.
\end{align*}    
\end{lemma}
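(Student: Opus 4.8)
The plan is to exploit the inf-sup structure \eqref{3.1'} of $H$ together with the elementary fact that taking $\sup$ (or $\inf$) over a fixed index set is $1$-Lipschitz with respect to the sup-norm of the family. Concretely, for functions $G_1,G_2$ on a common set $S$ one has $|\sup_S G_1-\sup_S G_2|\le \sup_S|G_1-G_2|$ and likewise for $\inf$; applying this twice (first to the inner $\inf_{a\in A}$, then to the outer $\sup_{b\in B}$) reduces the problem to bounding
\[
\bigl|L(t_1,x_1,p_1)(a,b)-L(t_2,x_2,p_2)(a,b)\bigr|
\]
uniformly in $(a,b)\in A\times B$. This is the main (and essentially only) step.

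For that pointwise bound I would write, recalling $L(t,x,p)(a,b)=c(t,x,a,b)+p\cdot f(t,x,a,b)$,
\[
L(t_1,x_1,p_1)(a,b)-L(t_2,x_2,p_2)(a,b)
= \bigl(c(t_1,x_1,a,b)-c(t_2,x_2,a,b)\bigr)+\bigl(p_1\cdot f(t_1,x_1,a,b)-p_2\cdot f(t_2,x_2,a,b)\bigr),
\]
and split the second difference as $p_1\cdot(f(t_1,x_1,a,b)-f(t_2,x_2,a,b))+(p_1-p_2)\cdot f(t_2,x_2,a,b)$. The cost term contributes $\|c\|_{\Lip}(|t_1-t_2|+|x_1-x_2|)$; the first piece of the drift term contributes $|p_1|\,\|f\|_{\Lip}(|t_1-t_2|+|x_1-x_2|)$; the last piece contributes $\|f\|_\infty|p_1-p_2|$. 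This yields the desired inequality with $|p_1|$ in place of $\min\{|p_1|,|p_2|\}$. By symmetry — expanding around $p_2$ instead, i.e. writing $p_1\cdot f(t_1,x_1)-p_2\cdot f(t_2,x_2)=p_2\cdot(f(t_1,x_1)-f(t_2,x_2))+(p_1-p_2)\cdot f(t_1,x_1)$ — one gets the same bound with $|p_2|$, and taking the better of the two gives $\min\{|p_1|,|p_2|\}$.

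I do not expect any genuine obstacle here: the only thing to be slightly careful about is the order in which the two sup/inf stability estimates are chained, and the fact that the pointwise Lipschitz bound above is uniform in $(a,b)$, so that passing to the inf over $a$ and then the sup over $b$ does not lose anything. Combining the two displays and using the $1$-Lipschitz property of $\inf_{a\in A}$ and $\sup_{b\in B}$ completes the proof.
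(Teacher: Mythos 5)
Your proposal is correct and follows essentially the same route as the paper: both reduce the estimate on $H$ to the pointwise Lipschitz bound on $L(t,x,p)(a,b)$ uniform in $(a,b)$ (with the $\min\{|p_1|,|p_2|\}$ coming from choosing which of $p_1,p_2$ to expand around), the paper merely implementing the $1$-Lipschitz stability of $\sup_b\inf_a$ concretely by comparing both sides at the common control pair $(\alpha_{1,\beta_2},\beta_2)$ instead of quoting the abstract inequality $|\sup\inf L_1-\sup\inf L_2|\le\sup_{a,b}|L_1-L_2|$.
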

\begin{proof}
Let $(\alpha_i,\beta_i)$ for $i=1,2$ be such that
\[
H(t_i,x_i,p_i)={L}(t_i,x_i,p_i)(\alpha_{i},\beta_{i}).
\]
For any $b\in B$,  define $\alpha_{i,b}$ as
\[
\alpha_{i,b}(t,x)\in \argmin_{a \in A} L(t_i,x_i,p_i)(a,b).
\]
By \eqref{3.1'} and the regularity assumptions on $c$ and $f$, it follows that
\[
\begin{aligned}
& {L}(t_1,x_1,p_1)(\alpha_{1},\beta_{1})-{L}(t_2,x_2,p_2)(\alpha_2,\beta_2)\\
\geq\, & {L}(t_1,x_1,p_1)(\alpha_{1,\beta_2},\beta_2)-{L}(t_2,x_2,p_2)(\alpha_{1,\beta_2},\beta_2)\\
 =\, &c(t_1,x_1,\alpha_{1,\beta_2},\beta_2) + p_1 \cdot f(t_1,x_1,\alpha_{1,\beta_2},\beta_2)-c(t_2,x_2,\alpha_{1,\beta_2},\beta_2) - p_2 \cdot f(t_2,x_2,\alpha_{1,\beta_2},\beta_2)\\
\geq\,& -(\|c\|_{\Lip}+\|f\|_{\Lip}\min\{|p_1|,|p_2|\})(|t_2-t_1|+|x_2-x_1|)-\|f\|_\infty|p_1-p_2|.
\end{aligned}  
\]
The other direction follows in the same manner.
\end{proof}

\quad The following lemma estimates the difference between 
\[
{L}(t,x,\nabla^h V_n^{\tau,h})(\alpha_{n},\beta_{n})\quad\text{and}\quad H(t,x,\nabla^h V^{\tau,h})
\]
which, respectively, appeared in \eqref{4.1} and \eqref{4.2}.

\begin{lemma}\lb{L.4.1}
For $n\geq 1$,
\begin{align*}
\left|{L}(t,x,\nabla^h V_n^{\tau,h})(\alpha_{n},\beta_{n})-H(t,x,\nabla^h V^{\tau,h})\right|\leq \|f\|_\infty\left[|\nabla^h (V_n^{\tau,h}-V^{\tau,h})|+|\nabla^h (V_{n-1}^{\tau,h}-V^{\tau,h})|\right].
\end{align*}    
\end{lemma}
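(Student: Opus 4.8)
The plan is to exploit the variational (sup-inf) structure of $H$ together with the selection properties of $(\alpha_n,\beta_n)$ recorded in \eqref{4.1'}, and then convert the resulting bound, which is phrased in terms of $\nabla^h V_n^{\tau,h}$ and $\nabla^h V_{n-1}^{\tau,h}$, into one involving $\nabla^h V^{\tau,h}$ at the cost of two gradient-difference terms. The key observation is that, by \eqref{4.1'}, the policy $(\alpha_n,\beta_n)$ is built from $\nabla^h V_{n-1}^{\tau,h}$ (not $\nabla^h V_n^{\tau,h}$), so that
\[
H(t,x,\nabla^h V_{n-1}^{\tau,h}) = L(t,x,\nabla^h V_{n-1}^{\tau,h})(\alpha_n,\beta_n),
\]
and moreover, for every $a\in A,\ b\in B$,
\[
L(t,x,\nabla^h V_{n-1}^{\tau,h})(\alpha_{n,b},b)\ \leq\ L(t,x,\nabla^h V_{n-1}^{\tau,h})(\alpha_n,\beta_n)\ \leq\ L(t,x,\nabla^h V_{n-1}^{\tau,h})(a,\beta_n).
\]

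First I would bound $\big|L(t,x,\nabla^h V_n^{\tau,h})(\alpha_n,\beta_n)-L(t,x,\nabla^h V_{n-1}^{\tau,h})(\alpha_n,\beta_n)\big|$. Since $L(t,x,p)(a,b)=c(t,x,a,b)+p\cdot f(t,x,a,b)$ is affine in $p$ with slope $f(t,x,a,b)$, this difference equals $\big(\nabla^h V_n^{\tau,h}-\nabla^h V_{n-1}^{\tau,h}\big)\cdot f(t,x,\alpha_n,\beta_n)$, hence is at most $\|f\|_\infty\,|\nabla^h(V_n^{\tau,h}-V_{n-1}^{\tau,h})|$. Combining with the identity above, the left-hand side of the claim is controlled by
\[
\|f\|_\infty\,|\nabla^h(V_n^{\tau,h}-V_{n-1}^{\tau,h})| \;+\; \big|H(t,x,\nabla^h V_{n-1}^{\tau,h})-H(t,x,\nabla^h V^{\tau,h})\big|.
\]
For the second term I would apply Lemma \ref{L.2.3} (with $t_1=t_2$, $x_1=x_2$), which gives $\big|H(t,x,\nabla^h V_{n-1}^{\tau,h})-H(t,x,\nabla^h V^{\tau,h})\big|\leq \|f\|_\infty\,|\nabla^h(V_{n-1}^{\tau,h}-V^{\tau,h})|$. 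Finally, a triangle inequality $|\nabla^h(V_n^{\tau,h}-V_{n-1}^{\tau,h})|\leq |\nabla^h(V_n^{\tau,h}-V^{\tau,h})|+|\nabla^h(V_{n-1}^{\tau,h}-V^{\tau,h})|$ turns the bound into the asserted
\[
\|f\|_\infty\big[\,|\nabla^h(V_n^{\tau,h}-V^{\tau,h})| + |\nabla^h(V_{n-1}^{\tau,h}-V^{\tau,h})|\,\big].
\]

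There is no serious obstacle here; the only point requiring a little care is keeping straight which iterate's gradient is used to define the policy $(\alpha_n,\beta_n)$ in \eqref{4.1'} — namely $V_{n-1}^{\tau,h}$ — so that the exact cancellation $H(t,x,\nabla^h V_{n-1}^{\tau,h})=L(t,x,\nabla^h V_{n-1}^{\tau,h})(\alpha_n,\beta_n)$ is available. One could alternatively avoid Lemma \ref{L.2.3} and argue directly using the sandwich inequalities for $(\alpha_n,\beta_n)$ together with the corresponding ones for the optimal policy realizing $H(t,x,\nabla^h V^{\tau,h})$, comparing $L$-values along the two policies; this yields the same estimate and makes transparent that only affineness of $L$ in $p$ and $\|f\|_\infty$ are used.
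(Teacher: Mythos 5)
Your argument is essentially the paper's own proof: it rests on the identity $H(t,x,\nabla^h V_{n-1}^{\tau,h})=L(t,x,\nabla^h V_{n-1}^{\tau,h})(\alpha_n,\beta_n)$ (which you make explicit and which is implicit in the paper's appeal to Lemma \ref{L.2.3}), the Lipschitz dependence of $L$ on $p$ to swap $\nabla^h V_n^{\tau,h}$ for $\nabla^h V_{n-1}^{\tau,h}$, and Lemma \ref{L.2.3} for the difference of the Hamiltonians. One small caveat, shared with the final display of the paper's proof: the concluding triangle inequality actually yields $\|f\|_\infty\left[|\nabla^h(V_n^{\tau,h}-V^{\tau,h})|+2|\nabla^h(V_{n-1}^{\tau,h}-V^{\tau,h})|\right]$ rather than the bound with both coefficients equal to one, a constant-level discrepancy that is harmless in the lemma's subsequent use.
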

\begin{proof}
For fixed $(t,x)$, let $(\alpha_*,\beta_*)$ be such that
\[
{L}(t,x,\nabla^h V^{\tau,h})={L}(t,x,\nabla^h V^{\tau,h})(\alpha_*,\beta_*).
\]
It follows from Lemma \ref{L.2.3} that
\beq\lb{3.3}
\begin{aligned}
\left| {L}(t,x,\nabla^h V_{n-1}^{\tau,h})(\alpha_{n},\beta_{n})-{L}(t,x,\nabla^h V^{\tau,h})(\alpha_*,\beta_{*})\right|\leq  \|f\|_\infty |\nabla^h (V_{n-1}^{\tau,h}-V^{\tau,h})|.
\end{aligned}  
\eeq
Since $L(t,x,p)(a,b)$ is Lipschitz continuous in $p$, by using \eqref{3.3} and the triangle inequality, we obtain
\begin{align*}
& \left|{L}(t,x,\nabla^h V_n^{\tau,h})(\alpha_{n},\beta_{n})-{L}(t,x,\nabla^h V^{\tau,h})(\alpha_*,\beta_*)\right|\\
\leq\, &\left| {L}(t,x,\nabla^h V_{n-1}^{\tau,h})(\alpha_{n},\beta_{n})-{L}(t,x,\nabla^h V^{\tau,h})(\alpha_*,\beta_*)\right|+\|f\|_\infty |\nabla^h (V_n^{\tau,h}-V_{n-1}^{\tau,h})|\\
\leq\, & \|f\|_\infty|\nabla^h (V_n^{\tau,h}-V^{\tau,h})|+\|f\|_\infty|\nabla^h (V_{n-1}^{\tau,h}-V^{\tau,h})|.
\end{align*}    
\end{proof}

\section{Proof of Theorem \ref{T.4.2}}
\label{S3}


\quad Throughout this section, we always assume the settings of Theorem \ref{T.4.2}.
We write $V_n:=V^{\tau,h}_n$ and $V_*:=V^{\tau,h}$, which are, respectively, bounded solutions to \eqref{4.1} and \eqref{4.2}. For simplicity of notation, we write 
\[
\alpha_n:=\alpha_n(t,x),\qquad
\alpha_*:=\alpha(t,x,\nabla^h V_*(t,x)),
\]
and similarly for $\beta_n$ and $\beta_*$. 
Recall the notations of \eqref{3.1'} and write 
\[
\Sigma^h_i:=\Sigma_i+\nu_h. 
\]
We also denote
\[
 c_n:=c(t,x,\alpha_n,\beta_n) \quad \text{ and } \quad f_n:=f(t,x,\alpha_n, \beta_n)
\]
and we will sometimes drop $(t,x)$ from the notations of $V_n(t,x)$ and $V_*(t,x)$. We will write
\[
\calE:=\{ \pm 1,\,\ldots\,,\pm d\}.
\]

\quad The main goal of this section is to prove an exponential convergence rate of PI for the discrete space-time schemes, Theorem \ref{T.4.2}.
We emphasize that the drift terms do not help establish the convergence of $V_n$ to $V_*$.
Instead, the key terms for obtaining a quantitative bound of $V_n-V_*$ are the diffusion terms. 


\begin{lemma}\label{L.3.1}
Assume for each $i=1,\ldots, d$, $M_i=M_i(t,x)\in [\lambda,\Lambda]$ for some $0<\lambda\leq\Lambda<\infty$, and
\begin{equation}\lb{N3}
d\Lambda\tau\leq 4h^2.
\end{equation}
Let $u,\ell$ be functions on $\Omega^{\tau,h}_{T}$ that satisfy 
\[
\calL(u):=\timed u+\frac{1}{2}\sum_{i=1}^d M_i{\Delta^{h}_i} u=\ell \quad \text{ in } \Omega^{\tau,h}_{T}.
\]
Then, we have
\[
\calL(u^2)\geq 2u\ell-2\tau \ell^2+\frac\lambda{4}\sum_{j\in\calE}|D^h_j u|^2 \quad \text{ in } \Omega^{\tau,h}_{T}.
\]
\end{lemma}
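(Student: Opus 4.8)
The plan is to expand $\calL(u^2)$ directly using the definitions of $\timed$ and $\Delta_i^h$, and then split the resulting expression into the "consistency" part $2u\ell$, an error term, and the positive Dirichlet-energy part. Writing $\timed(u^2)(t,x) = \frac{u(t,x)^2 - u(t-\tau,x)^2}{\tau}$, I use the algebraic identity $a^2 - b^2 = 2a(a-b) - (a-b)^2$ with $a = u(t,x)$, $b = u(t-\tau,x)$, so that
\[
\timed(u^2) = 2u\,\timed u - \tau(\timed u)^2.
\]
For the second-order part, I use the elementary identity (valid termwise in $i$)
\[
\Delta_i^h(u^2)(t,x) = 2u(t,x)\,\Delta_i^h u(t,x) + |D_i^h u(t,x)|^2 + |D_{-i}^h u(t,x)|^2,
\]
which follows by expanding $u(x+he_i)^2 - 2u(x)^2 + u(x-he_i)^2$ and completing squares; note the last two squared terms are exactly the forward differences in the $+e_i$ and $-e_i$ directions, so summing over $i=1,\dots,d$ produces $\sum_{j\in\calE}|D_j^h u|^2$. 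Combining these two expansions gives
\[
\calL(u^2) = 2u\,\calL(u) - \tau(\timed u)^2 + \frac12\sum_{i=1}^d M_i\big(|D_i^h u|^2 + |D_{-i}^h u|^2\big).
\]

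Now I substitute $\calL(u) = \ell$ in the first term, and I need to absorb the sign-indefinite term $-\tau(\timed u)^2$. Using the equation again, $\timed u = \ell - \frac12\sum_{i=1}^d M_i\Delta_i^h u$, and the inequality $(a-b)^2 \le 2a^2 + 2b^2$, I bound
\[
\tau(\timed u)^2 \le 2\tau\ell^2 + 2\tau\Big(\tfrac12\sum_{i=1}^d M_i\Delta_i^h u\Big)^2.
\]
It remains to show the last quantity is dominated by $\tfrac14$ of the energy term $\frac12\sum_{i=1}^d M_i(|D_i^h u|^2+|D_{-i}^h u|^2) = \frac12\sum_i M_i|D_i^h u|^2 + \frac12\sum_i M_i |D_{-i}^h u|^2$ (after leaving $\frac34$ of it, i.e.\ at least $\frac{\lambda}{4}\sum_{j\in\calE}|D_j^h u|^2$, as the final positive remainder). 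By \eqref{p.9}, $\Delta_i^h u = -\frac1h(D_i^h u + D_{-i}^h u)$, so by Cauchy--Schwarz $(\Delta_i^h u)^2 \le \frac{2}{h^2}(|D_i^h u|^2 + |D_{-i}^h u|^2)$, and then by Cauchy--Schwarz in $i$ together with $M_i \le \Lambda$,
\[
2\tau\Big(\tfrac12\sum_{i=1}^d M_i\Delta_i^h u\Big)^2 \le \frac{\tau}{2}\, d\,\Lambda \sum_{i=1}^d M_i (\Delta_i^h u)^2 \le \frac{\tau d\Lambda}{h^2}\sum_{i=1}^d M_i\big(|D_i^h u|^2 + |D_{-i}^h u|^2\big).
\]
The hypothesis \eqref{N3}, $d\Lambda\tau \le 4h^2$, makes the coefficient $\frac{\tau d\Lambda}{h^2} \le 4$; to close the estimate cleanly I track constants so that this term consumes at most $\frac12 - \frac{\lambda}{4\Lambda}\cdot\text{(something)}$ of the available energy—more concretely, since $\frac{\tau d\Lambda}{h^2}\le 4$ I get the bad term $\le \frac12\sum_i M_i(|D_i^hu|^2+|D_{-i}^hu|^2)$ is too weak, so I instead sharpen \eqref{N3}'s use: writing $\frac{\tau d \Lambda}{h^2} \le 4$ is exactly what converts the factor into at most the full energy, hence I should use a slightly more careful Cauchy--Schwarz (pulling out $\sqrt{M_i}$) to land at $\le \frac14 \cdot \frac12\sum_i M_i(\cdots) \cdot (\text{const})$; collecting terms yields
\[
\calL(u^2) \ge 2u\ell - 2\tau\ell^2 + \Big(\frac12 - \text{[bad coefficient]}\Big)\sum_{i=1}^d M_i\big(|D_i^h u|^2+|D_{-i}^h u|^2\big) \ge 2u\ell - 2\tau\ell^2 + \frac{\lambda}{4}\sum_{j\in\calE}|D_j^h u|^2,
\]
using $M_i \ge \lambda$ in the last step.

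The main obstacle is the bookkeeping of constants in the final step: I must verify that the combination of the Cauchy--Schwarz losses (factor $2$ from $(a-b)^2\le 2a^2+2b^2$, factor $2/h^2$ from the $\Delta_i^h$-to-$D^h$ conversion, factor $d$ from Cauchy--Schwarz over the $d$ coordinates) together with the constraint $d\Lambda\tau\le 4h^2$ leaves a remainder of at least $\frac{\lambda}{4}\sum_{j\in\calE}|D_j^h u|^2$ and not merely something positive. This is purely arithmetic — the structural identities are the two exact expansions of $\timed(u^2)$ and $\Delta_i^h(u^2)$ — so the proof reduces to an honest but routine inequality chase once those identities are in place. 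I expect the condition \eqref{N3} to be calibrated precisely so that the bad term is at most $\frac14\sum_i M_i(|D_i^hu|^2+|D_{-i}^hu|^2)$, leaving $\frac14\sum_i M_i(|D_i^hu|^2+|D_{-i}^hu|^2)\ge \frac{\lambda}{4}\sum_{j\in\calE}|D_j^hu|^2$, which is exactly the claimed bound.
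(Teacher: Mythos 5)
Your decomposition is exactly the paper's: the identities $\timed(u^2)=2u\,\timed u-\tau(\timed u)^2$ and $\Delta^h_i(u^2)=2u\,\Delta^h_i u+|D^h_iu|^2+|D^h_{-i}u|^2$, the substitution $\timed u=\ell-\tfrac12\sum_i M_i\Delta^h_i u$, and the weighted Cauchy--Schwarz give the same intermediate estimate the paper derives, namely
$\calL(u^2)\ \ge\ 2u\ell-2\tau\ell^2+\bigl(\tfrac12-\tfrac{d\Lambda\tau}{h^2}\bigr)\sum_{j\in\calE}M_j|D^h_ju|^2$
(the sign in your quotation of \eqref{p.9} is immaterial since only squares enter). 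Up to that point everything is correct and coincides with the paper's computation.

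The gap is the final step, which you yourself flag: your last display still contains the placeholder ``[bad coefficient]'' and the stated conclusion is asserted rather than derived, and the repair you propose --- a ``more careful Cauchy--Schwarz pulling out $\sqrt{M_i}$'' --- does not exist, because that is precisely the weighted Cauchy--Schwarz you already used; the coefficient $\tfrac{d\Lambda\tau}{h^2}$ cannot be improved along this route. What you have actually hit is the calibration of \eqref{N3}: to conclude one needs $\tfrac{d\Lambda\tau}{h^2}\le\tfrac14$, i.e.\ $d\Lambda\tau\le h^2/4$, whereas \eqref{N3} as printed reads $d\Lambda\tau\le 4h^2$ and only yields $\tfrac{d\Lambda\tau}{h^2}\le 4$, leaving a negative coefficient. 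The paper's own proof passes from $\tfrac12-\tfrac{d\Lambda\tau}{h^2}$ to $\tfrac14$ at exactly this point, so it implicitly uses the stronger reading $d\Lambda\tau\le h^2/4$; the constant $4$ in \eqref{N3} (and in the first line of \eqref{N4}) is evidently on the wrong side, and no refinement can rescue the printed version: for $d=1$, constant $M_1=\lambda=\Lambda$, a spatially alternating $u(t,\cdot)$, and $u(t-\tau,\cdot)$ chosen so that $\ell=0$, the claimed inequality forces $\Lambda\tau\le h^2/2$, which $d\Lambda\tau\le 4h^2$ does not guarantee. With the corrected reading of \eqref{N3} your chain closes immediately ($\tfrac12-\tfrac{d\Lambda\tau}{h^2}\ge\tfrac14$ and $M_j\ge\lambda$ give the claimed $\tfrac{\lambda}{4}\sum_{j\in\calE}|D^h_ju|^2$); as submitted, however, the proof is unfinished and the advertised sharpening would not finish it.
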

\begin{proof}
Note that
\begin{align*}
\timed(u^2)&=(\timed u)u+u(t-\tau,x)\timed u=2u\,\timed u-\tau (\timed u)^2,
\\
D_i^h(u^2)&= (D_i^hu)u+u(t,x+he_i)D_i^h u=2uD_i^h u+h(D_i^hu)^2.    
\end{align*}
Hence, by \eqref{p.9},
\begin{align*}
M_i \Delta_i^h (u^2)=  2u M_i\Delta_i^h u+M_i h \sum_{j=\pm i}(D_j^h u)^2.
\end{align*}
Writing $M_{-i}:=M_i$, we get
\beq\lb{*3}
\calL(u^2)=2u\calL u-\tau (\timed u)^2+\frac12\sum_{j\in\calE}M_j(D_j^h u)^2.
\eeq
Furthermore, note that
\begin{align*}
\left(\partial_t^\tau u\right)^2&=\left(\ell-\frac12\sum_{i=1}^d M_i\Delta_i^h u\right)^2\leq 2\ell^2+\frac{1}{2}\left(\frac{1}{h}\sum_{j\in\calE}^d M_jD_j^h u\right)^2\\
&\leq 2\ell^2+\frac{1}{2h^2}\left(\sum_{j\in\calE}M_j(D_j^h u)^2\right)\left(\sum_{j\in\calE}M_j\right)\leq 2\ell^2+\frac{d\Lambda}{h^2}\sum_{j\in\calE}M_j(D_j^h u)^2.
\end{align*}
Thus, it follows from \eqref{*3} and \eqref{N3} that
\[
\calL(u^2)\geq 2u\ell-2\tau\ell^2+\left(\frac12-\frac{d\Lambda\tau}{h^2}\right)\sum_{j\in\calE} M_j(D_j^h u)^2\geq 2u\ell-2\tau\ell^2+\frac14\sum_{j\in\calE} M_j(D_j^h u)^2,
\]
which finishes the proof.
\end{proof}

\quad Now we prove Theorem \ref{T.4.2}.

\begin{proof}[{Proof of Theorem \ref{T.4.2}}]
Let us write $\lambda:=\lambda_h$ and $\Lambda:=\Lambda_h$. Then the condition \eqref{N4} implies \eqref{N3} and $96\tau \left(\max\{\|c\|_\infty^2,2d\|f\|_\infty^2\}\right)\leq \lambda$.
Setting $u_n:=V_n-V_*$, we get
   \begin{align*}
        \begin{cases}
       \timed u_n+\frac12\sum_{i=1}^d \Sigma^h_i\Delta_i^h u_n=\ell_n \quad & \text{ in }\Omega^{\tau,h}_{T}, \\
       u_n(T,\cdot)=0 \quad & \text{ on }\bbZ_h^d
    \end{cases}
   \end{align*}
where
\[
\ell_n=H(t,x,\nabla^h V_*)-L(t,x,\nabla^h V_n^{\tau,h})(\alpha_n,\beta_n).
\]
By Lemma \ref{L.4.1},
\beq\lb{4.41}
|\ell_n|^2\leq \|f\|_\infty^2\left(|\nabla^h u_n|+|\nabla^h u_{n-1}|\right)^2\leq C_0\sum_{j\in\calE}\left(|{D_j^h} u_n|^2+|{D_j^h} u_{n-1}|^2\right)
\eeq
with $C_0:=4d\|f\|_\infty^2$. Define
\[
G_n:=G_n(t,x)=\sum_{j\in\calE}|D^h_j u_n|^2(t,x).
\]
By Lemma \ref{L.3.1}, for any $\eps_0>0$ we have
\begin{align*}
    \calL(u_n^2)&\geq 2u_n\ell_n-2\tau \ell_n^2+\frac\lambda{4}G_n\geq -\frac1{\eps_0}u_n^2-(\eps_0+2\tau) \ell_n^2+\frac\lambda{4}G_n\\
    &\geq -\frac1{\eps_0}u_n^2 +\left(\frac\lambda{4}-C_0\eps_0-2C_0\tau\right)G_n-C_0(\eps_0+2\tau)G_{n-1}
\end{align*}
where in the last inequality, we used \eqref{4.41}.
Choosing $\eps_0:=\frac{\lambda}{24C_0}$, and noting $\tau\leq \frac
\lambda{48C_0}$, we obtain
\beq\lb{*2}
\calL(u_n^2)\geq -\frac1{\eps_0}u_n^2 +\frac\lambda{6}G_n-\frac{\lambda}{12}G_{n-1}.
\eeq

\quad Next, define
\beq\lb{xi}
\xi(t):=(1+\tau/\eps_0)^{t/\tau}.
\eeq
Direct computation yields $\timed(\xi)(t)=\xi(t-\tau)/\eps_0$, and
\[
\begin{aligned}
    \timed(\xi\,u_n^2)
    &=\xi(t-\tau)\timed(u_n^2)+\timed(\xi) u_n^2(t,x)\\
    &=\xi(t-\tau)\timed(u_n^2)+\xi(t-\tau)\frac{1}{\eps_0}u_n^2(t,x).
\end{aligned}
\]
Hence, \eqref{*2} implies
\beq\lb{*6}
\calL(\xi\, u_n^2)\geq  \frac{\lambda\xi(t-\tau)}{12}\left(2G_n-G_{n-1}\right).
\eeq

\quad Now, let $\psi_n$ be the solution of
\[
        \begin{cases}
       \calL \psi_n=-\xi(t-\tau)G_n \quad & \text{ in }\Omega^{\tau,h}_{T}, \\
       \psi_n(T,\cdot)=0 \quad & \text{ on }\bbZ_h^d.
    \end{cases}
\]
Since $G_n\geq 0$, by the maximum principle  (see Lemma \ref{L.2.1} with $L=0$), $\psi_n\geq 0$.
Then \eqref{*6} shows
\[
        \begin{cases}
       \calL \left(\xi(t)u_n^2+\frac{\lambda}{6}\psi_n-\frac{\lambda}{12}\psi_{n-1}\right)\geq 0\quad & \text{ in }\Omega^{\tau,h}_{T}, \\
       \xi(T)u_n^2(T,\cdot)+\frac{\lambda}{6}\psi_n(T,\cdot)-\frac{\lambda}{12}\psi_{n-1}(T,\cdot)=0 \quad & \text{ on }\bbZ_h^d.
    \end{cases}
\]
By the maximum principle again, we get
\[
\xi(t)u_n^2(t,\cdot)+\frac{\lambda}{6}\psi_n(t,\cdot)-\frac{\lambda}{12}\psi_{n-1}(t,\cdot)\leq 0 \quad \text{ in }\Omega^{\tau,h}_{T}.
\]
Consequently, we find for all $n\geq 1$,
\[
\psi_n\leq \frac12\psi_{n-1}\leq\ldots\leq 2^{-n}\psi_0.
\]
Moreover, \eqref{*6} yields
\[
\xi(t)u_n^2(t,x)\leq \frac{\lambda}{12}\psi_{n-1}(t,x)\leq  \frac{\lambda}{6}2^{-n}\psi_{0}(t,x).
\]
Thus
\begin{align*}
    |V_n-V_*|^2\leq \frac{\lambda}{6}2^{-n}\psi_{0}(t,x).
\end{align*}

\quad Finally, since 
\[
\psi_0\leq \psi_{0,1}+\psi_{0,2}
\]
where for $i=1,2$, $\psi_{0,i}$ is the solution to
\beq\lb{*8}
\begin{cases}
\calL \psi_{0,i}=-2\xi(t-\tau)\sum_{j\in\calE}|D_j^h v_i|^2 \quad & \text{ in }\Omega^{\tau,h}_{T}, \\
\psi_{0,i}(T,\cdot)=0 \quad & \text{ on }\bbZ_h^d.
\end{cases}
\eeq
Here, $v_1=V_*$ and $v_2=V_0$.
The proof is finished after invoking Lemma \ref{L.3.2} below.
\end{proof}

\begin{lemma}\label{L.3.2}
For $i=1,2$, let $\psi_{0,i}$ solve \eqref{*8}. Then
\[
\psi_{0,i}\leq e^{C_1T/\lambda}\left(\frac{12}\lambda \|g\|_\infty^2+T\right)\qquad\text{ in }\Omega^{\tau,h}_{T}
\]
where $C_1:=48\max\{\|c\|_\infty^2,\,2d\|f\|_\infty^2\}$.
\end{lemma}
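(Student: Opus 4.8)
The plan is to apply Lemma \ref{L.3.1} to the function $v_i$ itself (with the simple choice $\ell = H(t,x,\nabla^h v_i)$ for $v_1=V_*$, and $\ell = L(t,x,\nabla^h V_0)(\alpha_0,\beta_0)$ for $v_2=V_0$), in order to gain a good lower bound on $\calL$ of a suitable nonnegative quantity built from $\sum_{j\in\calE}|D_j^h v_i|^2$, and then turn this around into an a priori upper bound for $\psi_{0,i}$ via the comparison principle (Lemma \ref{L.2.1} with $L=0$). Concretely, by Lemma \ref{L.3.1},
\[
\calL(v_i^2) \geq 2 v_i \ell - 2\tau \ell^2 + \frac{\lambda}{4}\sum_{j\in\calE}|D_j^h v_i|^2,
\]
so, after bounding $|\ell|^2 \leq 2\|c\|_\infty^2 + 2\|f\|_\infty^2 |\nabla^h v_i|^2 \leq 2\|c\|_\infty^2 + 2d\|f\|_\infty^2 \sum_{j\in\calE}|D_j^h v_i|^2$ (using that each $v_i$ is governed by an equation with running cost bounded by $\|c\|_\infty$ and drift bounded by $\|f\|_\infty$) and $2|v_i \ell| \leq \varepsilon_0^{-1} v_i^2 + \varepsilon_0 \ell^2$, and absorbing the $\ell^2$ terms using the smallness of $\tau$ from \eqref{N4}, one obtains an inequality of the shape
\[
\sum_{j\in\calE}|D_j^h v_i|^2 \leq C\bigl(\calL(v_i^2) + v_i^2\bigr)
\]
for an explicit constant $C$. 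Since $\|v_i\|_\infty \leq \|g\|_\infty + \|c\|_\infty T$ is already known, this gives pointwise control of the gradient-squared sum in terms of $\calL(v_i^2)$.

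\textbf{Key steps.} First I would multiply the previous display by $2\xi(t-\tau)$ (recall $\xi(t) = (1+\tau/\varepsilon_0)^{t/\tau} \leq e^{T/\varepsilon_0}$ on $\bbN^\tau_T$) and compare with the equation \eqref{*8} defining $\psi_{0,i}$: the right-hand side $-2\xi(t-\tau)\sum_{j\in\calE}|D_j^h v_i|^2$ is then bounded below by $-2C\xi(t-\tau)(\calL(v_i^2) + v_i^2)$. Next, using $\timed(\xi \, w) = \xi(t-\tau)\timed w + \varepsilon_0^{-1}\xi(t-\tau) w(t,x)$ exactly as in the proof of Theorem \ref{T.4.2}, I would recognize $2C\xi(t-\tau)\calL(v_i^2)$ plus lower-order terms as $\calL$ of an explicit auxiliary function (a multiple of $\xi v_i^2$ plus a linear-in-$t$ correction absorbing the $v_i^2$ and the constant-in-$\ell$ contributions). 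This produces a function $\Psi_i$ with $\calL(\Psi_i - \psi_{0,i}) \leq 0$ in $\Omega^{\tau,h}_T$ and $\Psi_i(T,\cdot) - \psi_{0,i}(T,\cdot) \geq 0$, whence the comparison principle forces $\psi_{0,i} \leq \Psi_i$. Finally, bounding $\xi$ by $e^{T/\varepsilon_0} = e^{24 C_0 T/\lambda}$ (since $\varepsilon_0 = \lambda/(24 C_0)$ with $C_0 = 4d\|f\|_\infty^2$, and adjusting the constant to the stated $C_1 = 48\max\{\|c\|_\infty^2, 2d\|f\|_\infty^2\}$) and collecting the terminal-data and running-cost contributions yields the claimed bound $\psi_{0,i} \leq e^{C_1 T/\lambda}\bigl(\frac{12}{\lambda}\|g\|_\infty^2 + T\bigr)$.

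\textbf{Main obstacle.} The routine but delicate part is the bookkeeping of constants: I need the exponential rate to come out exactly as $C_1/\lambda$ with $C_1 = 48\max\{\|c\|_\infty^2, 2d\|f\|_\infty^2\}$ and the prefactor exactly as $\frac{12}{\lambda}\|g\|_\infty^2 + T$, which requires choosing the Young's-inequality parameter $\varepsilon_0$ and splitting the $\ell^2 \leq 2\|c\|_\infty^2 + 2d\|f\|_\infty^2\sum|D_j^h v_i|^2$ budget so that exactly $\frac{\lambda}{4}$ of the gradient coefficient survives after the $2\tau \ell^2$ absorption (using $96\tau\max\{\|c\|_\infty^2, 2d\|f\|_\infty^2\} \leq \lambda$ from \eqref{N4}), and so that the terminal value $\|g\|_\infty^2$ of $v_i^2$ propagates with the right weight. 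The conceptual content — applying the Bernstein-type Lemma \ref{L.3.1} to $v_i$ and then inverting via comparison — is straightforward; the challenge is purely to make the arithmetic land on the stated constants.
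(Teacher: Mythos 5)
Your plan is essentially the paper's proof: apply Lemma \ref{L.3.1} to $v_i$ with $\ell$ the Hamiltonian/drift right-hand side, bound $|\ell|^2\le 2\|c\|_\infty^2+C_0\sum_{j\in\calE}|D_j^h v_i|^2$, absorb via Young's inequality and \eqref{N4} with $\eps_0=\lambda/(24C_0)$, then multiply by the weight $\xi$ with a linear-in-$t$ correction (the paper's comparison function is $\xi(t)\left(12 v_i^2/\lambda+t\right)$) and conclude with the discrete maximum principle against $\psi_{0,i}$. The only adjustment is that $C_0$ should be $\max\{2\|c\|_\infty^2,\,4d\|f\|_\infty^2\}$ rather than $4d\|f\|_\infty^2$ alone, so that both the constant and gradient parts of $\ell^2$ are absorbed and $\xi(T)\le e^{T/\eps_0}=e^{C_1T/\lambda}$ yields exactly the stated constants --- a bookkeeping point you already flagged.
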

\begin{proof}
Let us only prove the result for $i=1$ and $u=\psi_{0,1}$.
By the equation \eqref{4.2}, $V_*:=V^{\tau,h}$ satisfies
\[
\partial_t^\tau V_* +\frac12 \sum_{i=1}^d {\Sigma^h_i}\Delta V_*=\ell
\]
where $\ell:=H(t,x,\nabla^h V_*)$. We have
\[
|\ell|^2\leq 2\|c\|_\infty^2+2\|f\|_\infty^2|\nabla^h V_*|^2\leq 2\|c\|_\infty^2+4d\|f\|_\infty^2 \sum_{j\in\calE}|D_j^h V_*|^2.
\]
Setting $C_0:=\max\{2\|c\|_\infty^2,\,4d\|f\|_\infty^2\}$ and $G:=\sum_{j\in\calE}|D_j^h V_*|^2$, Lemma \ref{L.3.1} yields that
for any $\eps_0>0$,
\begin{align*}
    \calL(V_*^2)&\geq 2V_*\ell-2\tau \ell^2+\frac\lambda{4}G\geq -\frac1{\eps_0}V_*^2 \ell-(\eps_0+2\tau) \ell^2+\frac\lambda{4}G\\
    &\geq -\frac1{\eps_0}V_*^2 +\left(\frac\lambda{4}-C_0\eps_0-2C_0\tau\right)G-C_0(\eps_0+2\tau).
\end{align*}
Taking $\eps_0:=\frac{\lambda}{24C_0}$ and using $\tau\leq \frac
\lambda{48C_0}$, we obtain
\[
 \calL(V_*^2)\geq -\frac1{\eps_0}V_*^2 +\frac\lambda{6}G-\frac{\lambda}{12}, 
\]
which yields 
\[
 \calL(V_*^2+t\lambda/12)\geq -\frac1{\eps_0}V_*^2 +\frac\lambda{6}G. 
\]
Recall $\xi$ from \eqref{xi}. This yields
\[
\calL\left[\xi\,(V_*^2+t\lambda/12)\right]\geq \lambda \xi(t-\tau)G/6.
\]

\quad Since $\psi_{0,1}$ satisfies \eqref{*8} with $v_1:=V_*$, then for 
\[
v(t,x):=\xi(t)\,(12V_*^2/\lambda+t),
\]
we find
\[
\calL\left(\psi_{0,1}+v\right)\geq 0.
\]
The maximum principle yields
\[
(\psi_{0,1}+v)(t,\cdot)\leq (\psi_{0,1}+v)(T,\cdot)= \xi(T)(12 g^2/\lambda+T).
\]
Since $\xi(T)\leq e^{T/\eps_0}$, we proved the conclusion for $\psi_{0,1}$.
\end{proof}

\begin{proof}[{Proof of Corollary \ref{C.2}}]
It follows from Lemmas \ref{L.4.1} and \ref{L.2.3} that
\begin{align*}
&\left|{L}(t,x,\nabla^h V_*)(\alpha_{n},\beta_{n})-H(t,x,\nabla^h V_*)\right|\\
\leq\,& \left|{L}(t,x,\nabla^h V_*)(\alpha_{n},\beta_{n})-{L}(t,x,\nabla^h V_n)(\alpha_{n},\beta_{n})\right|\\
&\qquad \qquad \qquad\qquad\qquad +\left|{L}(t,x,\nabla^h V_n)(\alpha_{n},\beta_{n})-{L}(t,x,\nabla^h V_*)(\alpha_*,\beta_*)\right|\\
\leq\,& 2\|f\|_\infty\left[|\nabla^h (V_n-V_*)|+|\nabla^h (V_{n-1}-V_*)|\right].
\end{align*} 
By Theorem \ref{T.4.2}, we have for each $i=1,\ldots,d$,
\begin{align*}
\sup_{(t,x)\in \Omega^{\tau,h}_{T}}\left|D_i^h(V_n(t,x)-V_*(t,x))\right|^2 &\leq \frac{4}{h^2}\sup_{(t,x)\in \Omega^{\tau,h}_{T}}\left|V_n(t,x)-V_*(t,x)\right|^2\\
&\leq  \frac{\lambda  e^{C_1T/\lambda }}{2^{n-1} h^2}\left(\frac{12}{\lambda }\|g\|_\infty^2+T\right). 
\end{align*}
Combining the two estimates yields the conclusion.
\end{proof}

\quad Let us consider an example to see how the PI works.
\begin{ex}
    Let us consider
    \[
    H(t,x,p,X)=\max\{|p|-1,1-|p|\}+ V(x) + \frac12\Tr ((\sigma\sigma^T)(t,x) X).
    \]
    We can write
    \[
    \max\{|p|-1,1-|p|\}+ V(x) = \max_{\substack{i=1,2\\|e|\leq 1}}\min_{|a|\leq 1} \left(c(x,i)+f(a,(i,e))\cdot p  \right).
    \]
    Here,
    \[
    f(a,(1,e))=a, \qquad f(a,(2,e))=e,
    \]
    and
    \[
    c(x,1)=1+V(x),\qquad c(x,2)=-1+V(x).
    \]
    Denote by $b=(i,e)\in \{1,2\}\times \overline B_1$.
    Set $A=\overline B_1$ and $B=\{1,2\}\times \overline B_1$.

\quad     By computations, we choose that
    \[
    \alpha_b(t,x,p,X)=\alpha_b(p) = \alpha(p)=
    \begin{cases}
        -\frac{p}{|p|} \qquad &\text{ for } p \neq 0,\\
        0 \qquad &\text{ for } p =0,
    \end{cases}
    \]
    and
        \[
    \beta(t,x,p,X)=\beta(p) = 
    \begin{cases}
        (1,0) \qquad &\text{ for } |p| \leq 1,\\
        \left(2,\frac{p}{|p|}\right) \qquad &\text{ for } |p|\geq 1.
    \end{cases}
    \]
    In the above, we can select $\alpha_b=\alpha$, that is, $\alpha$ is independent of $b$.
    It is clear that $\alpha$ is discontinuous at $p=0$, and $\beta$ is discontinuous at $|p|=1$.
    Then, in the iterative process,
    \[
    \alpha_{n+1}(t,x)=
    \begin{cases}
        -\frac{\nabla^h V_n}{|\nabla^h V_n|} \qquad &\text{ for } \nabla^h V_n \neq 0,\\
        0 \qquad &\text{ for } \nabla^h V_n =0,
    \end{cases}
    \]
    and
     \[
    \beta_{n+1}(t,x) = 
    \begin{cases}
        (1,0) \qquad &\text{ for } |\nabla^h V_n| \leq 1,\\
        \left(2,\frac{\nabla^h V_n}{|\nabla^h V_n|}\right) \qquad &\text{ for } |\nabla^h V_n|\geq 1.
    \end{cases}
    \]
    In particular,
    \[
    {L}(t,x,p)( \alpha_{n+1},\beta_{n+1})=
    \begin{cases}
    V(x)+1 \qquad &\text{ for } \nabla^h V_n = 0,\\
    V(x)+1-\frac{\nabla^h V_n}{|\nabla^h V_n|}\cdot p \qquad &\text{ for } |\nabla^h V_n| \leq 1,\\
        V(x)-1+\frac{\nabla^h V_n}{|\nabla^h V_n|}\cdot p \qquad &\text{ for } |\nabla^h V_n|\geq 1.
    \end{cases}
    \]
    Thus, in this particular example, \eqref{4.1} has an explicit formulation and can be solved numerically rather quickly.
\end{ex}


\section{Convergence of the discrete equations}
\label{S4}

\quad Let \( V^{\tau,h} \) and \( v \) be the solutions to \eqref{4.2} and \eqref{3.1}, respectively. The goal of this section is to establish a quantitative convergence of \( V^{\tau,h} \) to \( v \) as \( \tau, h \to 0 \).

\subsection{Regularity of continuous equations}
Let us consider the continuous equation with a non-degenerate diffusion: for any $\delta\in (0,1)$ and $\eps\in [0,1)$,
\begin{equation}
\label{9.1}
\begin{cases}
\partial_t v^{\delta,\eps}(t,x) + H(t,x, \nabla v^{\delta,\eps}) =-\frac12\sum_{i=1}^d(\Sigma_i(t,x) +\delta) \partial_{x_i}^2 v^{\delta,\eps} \qquad &\text{ in } (0,T+\eps)\times \mathbb{R}^d,\\
 v^{\delta,\eps}(T+\eps,x) = g(x) \qquad &\text{ on }  \mathbb{R}^d.
 \end{cases}
\end{equation}
Here, we extend $H$ and $\Sigma$ for $t\in [T,T+\eps]$ smoothly if needed. Later, we denote $T_\eps:=T+\eps$.

\quad We note first that $v^{\delta,\eps}$ is uniformly Lipschitz continuous independent of $\delta>0$.

\begin{lemma}\lb{L.9.1}
Assume $g$ to be uniformly $C^2$. There exists $C>0$ independent of $\delta\in (0,1)$ such that
\[
\|\partial_t v^{\delta,\eps}\|_\infty+\|\nabla v^{\delta,\eps}\|_\infty\leq C.
\]
\end{lemma}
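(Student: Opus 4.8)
The plan is to establish both bounds via the maximum principle (comparison for the nondegenerate parabolic equation \eqref{9.1}), treating the Lipschitz bound in $x$ first, then deducing the bound on $\partial_t v^{\delta,\eps}$, and finally using the equation itself. First I would obtain the $L^\infty$ bound $\|v^{\delta,\eps}\|_\infty\le \|g\|_\infty+\|c\|_\infty T_\eps$ exactly as in the discrete setting (the constant and zero functions shifted by $\pm(\|g\|_\infty+\|c\|_\infty(T_\eps-t))$ are super/subsolutions, using $|H(t,x,0)|\le\|c\|_\infty$). For the spatial Lipschitz bound, the standard approach is to differentiate: since the coefficients $c,f,\sigma$ are uniformly Lipschitz in $x$ (and $\sigma$ is independent of the controls), one shows $w(t,x,y):=v^{\delta,\eps}(t,x)-v^{\delta,\eps}(t,y)-L|x-y|$ is a subsolution of a suitable linear parabolic inequality for $L$ large enough depending only on $\|g\|_{C^{0,1}}$, $\|c\|_{\rm Lip}$, $\|f\|_{\rm Lip}$, $\|\sigma\|_{\rm Lip}$ and $d$, but not on $\delta$; comparison then gives $\|\nabla v^{\delta,\eps}\|_\infty\le C$. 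Here one uses Lemma \ref{L.2.3}: $|H(t,x,p)-H(t,y,p)|\le(\|c\|_{\rm Lip}+\|f\|_{\rm Lip}|p|)|x-y|$, so along the flow the gradient bound closes by a Gronwall-type argument in $t$ (integrating backward from $t=T_\eps$ where $\|\nabla g\|_\infty\le\|g\|_{C^2}$).

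For the time derivative, the cleanest route is the standard trick of comparing $v^{\delta,\eps}(t,x)$ with $v^{\delta,\eps}(t+s,x)$ for small $s>0$: both solve \eqref{9.1}-type equations (after the smooth extension of $H,\Sigma$ past $T$), the terminal data differ by at most $\|v^{\delta,\eps}(T_\eps-s,\cdot)-g\|_\infty$, which one bounds by $Cs$ using the already-established spatial Lipschitz bound together with the equation to control one backward time-step (or, alternatively, by noting $g\in C^2$ so the equation gives an a priori bound on $\partial_t v^{\delta,\eps}$ near $t=T_\eps$ of the form $|\partial_t v^{\delta,\eps}(T_\eps,x)|\le |H(T_\eps,x,\nabla g)|+\tfrac12\sum_i(\|\Sigma_i\|_\infty+\delta)\|\partial_{x_i}^2 g\|_\infty$). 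Since $\delta\le1$ the term $\tfrac12\sum_i(\Sigma_i+\delta)\|\partial_{x_i}^2 g\|_\infty\le C(1+\|\Sigma\|_\infty)\|g\|_{C^2}$ is bounded uniformly in $\delta$. Comparison then propagates this to all $t$, giving $\|\partial_t v^{\delta,\eps}\|_\infty\le C$ with $C$ independent of $\delta$.

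The main obstacle is the degeneracy-uniformity: all constants must be independent of $\delta\in(0,1)$, so one cannot use parabolic smoothing estimates whose constants blow up as $\delta\to0$; instead every bound must come from comparison/maximum-principle arguments using only the structural Lipschitz assumptions on the data and the regularity of $H$ from Lemma \ref{L.2.3}. A secondary technical point is justifying the comparison principle for \eqref{9.1} in the unbounded domain $\mathbb{R}^d$ with merely Lipschitz (hence at-most-linear-growth, in fact bounded) solutions, and handling the smooth extension of $H$ and $\Sigma$ onto $[T,T_\eps]$ so that the extended coefficients retain the same Lipschitz bounds up to a universal factor; these are routine but must be stated. Once the spatial Lipschitz bound is in hand, the bound on $\partial_t v^{\delta,\eps}$ follows with little extra work.
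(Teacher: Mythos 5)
The paper itself gives no argument here: it simply invokes Bernstein's method and refers to the cited monographs, so your comparison/doubling route is a genuinely different (purely viscosity-theoretic) approach. The spatial part of your plan is sound and the constants are indeed $\delta$-independent for the right structural reasons: in the doubled second-order inequality the two $\delta$-Laplacians cancel (the Crandall--Ishii matrices satisfy $\xi^{T}(X-Y)\xi\le 0$ for the cone-type test function), and the $\Sigma_i$-terms are controlled by the Lipschitz continuity of $\sigma$ (not merely of $\Sigma$) through the block-matrix trace trick that the paper uses in the proof of Proposition \ref{P.5.4}; a factor $L(t)=L_0e^{K(T_\eps-t)}$ then closes the Gr\"onwall loop. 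Note also that, as you implicitly do, one must use the standing assumption that $c,f,\sigma$ are Lipschitz in $(t,x)$ (the lemma's statement only mentions $g\in C^2$, but these are the section's hypotheses), and your terminal barriers $g(x)\pm C(T_\eps-t)$ with $C\ge\|H(\cdot,\cdot,\nabla g)\|_\infty+\tfrac12\sum_i(\|\Sigma_i\|_\infty+1)\|\partial_{x_i}^2g\|_\infty$ are correct and uniform in $\delta\le 1$.

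The step that is not yet a proof is ``comparison then propagates this to all $t$'' for the time derivative. Since $H(t,x,p)$ and $\Sigma(t,x)$ depend on $t$, the shift $v^{\delta,\eps}(\cdot+s,\cdot)$ solves a \emph{different} equation, so the comparison principle for a single equation does not apply; and you cannot treat the $O(s)$ difference of the diffusion coefficients as a bounded forcing term, because it multiplies $\partial_{x_i}^2v^{\delta,\eps}$, which is not bounded uniformly in $\delta$. The repair is quantitative two-equation comparison: double variables with penalty $\beta|x-y|^2$, use the already-proved uniform spatial Lipschitz bound to get $|x-y|\le C/\beta$, and estimate the second-order terms by the same trace trick as in \eqref{3.111}--Proposition \ref{P.5.4}, giving $\Tr(\Sigma(t,y)X_2-\Sigma(t+s,x)X_1)\le C\beta\bigl(|x-y|^2+s^2\bigr)$; optimizing $\beta\sim 1/s$ yields a total error $C(s+\beta^{-1}+\beta s^2)=O(s)$ and hence $\|\partial_t v^{\delta,\eps}\|_\infty\le C$ uniformly in $\delta$. (Alternatively, one can run Bernstein in the time variable, using that $|\partial_t\Sigma_i|=2\sigma_i|\partial_t\sigma_i|\le C\Sigma_i^{1/2}$ to absorb the bad term.) With this step spelled out, your proof is complete; as written, the propagation claim is the one genuine gap.
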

\begin{proof}
The proof is standard following the line of Bernstein's method. 
We skip the proof and refer the reader to \cite{LMT,tranbook}.
\end{proof}

\quad Next, we bound the H\"{o}lder norm of $\nabla v^{\delta,\eps}$ in both space and time. 

\begin{lemma}\lb{L.9.3}

Assuming $g$ to be uniformly $C^3$, for any $\alpha\in (0,1)$, there exists $C>0$ independent of $\delta\in (0,1)$ such that
\[
\|\nabla v^{\delta,\eps}(\cdot,\cdot)\|_{C^{\alpha}((0,T_\eps)\times \bbR^d)}\leq  C \delta^{-(1+\alpha)}.
\]
\end{lemma}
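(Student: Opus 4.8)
\textbf{Plan of proof of Lemma~\ref{L.9.3}.} The plan is to regard \eqref{9.1} as a linear uniformly parabolic equation whose ellipticity constant degenerates at rate $\delta$, to perform an anisotropic parabolic rescaling that normalizes the diffusion coefficients so that interior parabolic $W^{2,1}_p$ estimates apply with a $\delta$-independent constant, and then to read off the H\"older regularity of the gradient via a parabolic Sobolev embedding; undoing the rescaling produces the weight $\delta^{-(1+\alpha)}$. Throughout, all constants will depend only on $d$, $\alpha$, the Lipschitz data of $c,f,\sigma$, and $\|g\|_{C^3}$.

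First I would linearize. By Lemma~\ref{L.9.1}, $\|\partial_t v^{\delta,\eps}\|_\infty+\|\nabla v^{\delta,\eps}\|_\infty\le C$ independently of $\delta$, and since $|H(t,x,p)|\le\|c\|_\infty+\|f\|_\infty|p|$ by \eqref{3.1'}, the function $F^\delta(t,x):=-H(t,x,\nabla v^{\delta,\eps}(t,x))$ is bounded uniformly in $\delta$. Hence $v^{\delta,\eps}$ solves the linear (backward) parabolic equation $\partial_t v^{\delta,\eps}+\tfrac12\sum_{i=1}^d(\Sigma_i+\delta)\partial_{x_i}^2 v^{\delta,\eps}=F^\delta$ on $(0,T_\eps)\times\bbR^d$ with terminal datum $g\in C^3$, whose diagonal leading coefficients satisfy $\delta\le\Sigma_i+\delta\le C$ and are Lipschitz in $(t,x)$; the interior parabolic $W^{2,1}_p$ and Sobolev embedding theory applies to such equations (equivalently, after reversing time).

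The heart of the matter is the rescaling. Fix $(t_0,x_0)\in(0,T_\eps)\times\bbR^d$; assume $x_0=0$. Set $\delta_i:=\Sigma_i(t_0,0)+\delta\in[\delta,C]$, and for $r:=\sqrt\delta/C_1$ with $C_1$ large (to be fixed), let $\tilde v(s,y):=v^{\delta,\eps}(t_0+r^2 s,\,r\sqrt{\delta_1}\,y_1,\dots,r\sqrt{\delta_d}\,y_d)$. A direct computation gives, near the origin, $\partial_s\tilde v+\tfrac12\sum_{i=1}^d\tilde\Sigma_i\,\partial_{y_i}^2\tilde v=r^2\tilde F$, where $\tilde\Sigma_i(s,y)=(\Sigma_i(t_0+r^2 s,r\sqrt{\delta_1}y_1,\dots)+\delta)/\delta_i$ (so $\tilde\Sigma_i(0,0)=1$), and $\tilde F$ is the corresponding rescaling of $F^\delta$, hence still bounded by $\|F^\delta\|_\infty$. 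The crucial point, as in the elliptic analogue, is that $\sqrt{\Sigma_i}=|\sigma_i|$ is Lipschitz: near a near-zero value of $\Sigma_i$ this gives $|\Sigma_i(t_0,x)-\Sigma_i(t_0,0)|\le C\sqrt{\delta_i}\,|x|+C|x|^2$, while the Lipschitz dependence on $t$ is harmless because time is rescaled by $r^2\sim\delta\le\delta_i$. Substituting $|x|\lesssim r\max_j\sqrt{\delta_j}$ and $|t-t_0|\le r^2$ on the unit parabolic cylinder $Q_1$ and dividing by $\delta_i\ge\delta$, one checks that, for $C_1$ large enough (independently of $\delta$, $i$, and $(t_0,x_0)$), $\tilde\Sigma_i$ is Lipschitz on $Q_1$ with a uniform constant and $\tilde\Sigma_i\in[\tfrac12,2]$.

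To conclude, apply the interior parabolic $W^{2,1}_p$ estimate on $Q_{1/2}$ (constant depending only on $d$, $p$, the ellipticity bounds, and the uniform modulus just obtained): for every $p\in(1,\infty)$, $\|\tilde v\|_{W^{2,1}_p(Q_{1/2})}\le C(\|r^2\tilde F\|_{L^p(Q_1)}+\|\tilde v\|_{L^p(Q_1)})\le C$, using $\|F^\delta\|_\infty\le C$ and $\|v^{\delta,\eps}\|_\infty\le C$. Taking $p=(d+2)/(1-\alpha)$ and invoking the parabolic Sobolev embedding, $\nabla_y\tilde v$ is $\alpha$-H\"older in $y$ and $\tfrac\alpha2$-H\"older in $s$ on $Q_{1/4}$ with norm $\le C$. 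Since $\partial_{y_i}\tilde v=r\sqrt{\delta_i}\,(\partial_{x_i}v^{\delta,\eps})(t_0+r^2 s,\dots)$ and $r\sqrt{\delta_i}\ge r\sqrt\delta=\delta/C_1$ while $r=\sqrt\delta/C_1$, undoing the change of variables yields $|\nabla v^{\delta,\eps}(t,x)-\nabla v^{\delta,\eps}(t',x')|\le C\delta^{-(1+\alpha)}(|x-x'|+|t-t'|^{1/2})^\alpha$ whenever $(t,x),(t',x')$ lie in the image of $Q_{1/4}$ under the scaling (a neighborhood of $(t_0,0)$ of size $\gtrsim\delta$); pairs outside it are absorbed using $\|\nabla v^{\delta,\eps}\|_\infty\le C$ from Lemma~\ref{L.9.1}. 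The parabolic boundary at $t=T_\eps$ is handled by the boundary $W^{2,1}_p$ estimate using $g\in C^3$, and near $t=0$ one extends the coefficients and the equation to $t<0$ to make those points interior. As $(t_0,x_0)$ is arbitrary, this gives Lemma~\ref{L.9.3} (with the H\"older seminorm measured in the parabolic distance $|x-x'|+|t-t'|^{1/2}$). The step I expect to be the main obstacle is the uniform regularity of the rescaled coefficients $\tilde\Sigma_i$ on $Q_1$ — exactly where one must exploit that $\sigma$, not merely $\Sigma$, is Lipschitz, and what dictates the choice $r\sim\sqrt\delta$ and hence the final exponent $\delta^{-(1+\alpha)}$; the quantitative parabolic $W^{2,1}_p$ theory with continuous coefficients, the parabolic Sobolev embedding, and the boundary estimate are standard.
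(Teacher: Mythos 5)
Your rescaling is exactly the paper's (same anisotropic change of variables with $r=\sqrt\delta/C_1$, same use of the Lipschitz continuity of $\sigma_i=\Sigma_i^{1/2}$ to get $\tilde\Sigma_i\in[\tfrac12,2]$ with uniform Lipschitz constant), but the conclusion you extract from it is strictly weaker than the lemma. The $W^{2,1}_p$ estimate plus the parabolic Sobolev embedding gives $\nabla\tilde v\in C^{\alpha,\alpha/2}$, i.e.\ only $\tfrac\alpha2$-H\"older in time, and you acknowledge this in your final parenthetical: your bound is in the parabolic distance $|x-x'|+|t-t'|^{1/2}$. The statement of Lemma \ref{L.9.3}, however, is the isotropic space–time $C^\alpha$ norm: the paper's proof ends with $\|\nabla v^{\delta,\eps}(\cdot,x)\|_{C^\alpha}\leq C\delta^{-(1+\alpha)}$, i.e.\ $\alpha$-H\"older in time, and this is not cosmetic. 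In Proposition \ref{P.9.6} the mollifier $\zeta_\eps(t/\eps,x/\eps)$ is isotropic in space–time (the paper explicitly contrasts this with Krylov's parabolic scaling), and the commutator bounds there use $|\nabla v^{\delta,\eps}(t,x)-\nabla v^{\delta,\eps}(t-s,x-y)|\leq C\delta^{-(1+\alpha)}\eps^\alpha$ for $|s|,|y|\leq\eps$. With only $\tfrac\alpha2$-H\"older continuity in time this degrades to $\delta^{-(1+\alpha)}\eps^{\alpha/2}$, which would change the optimization over $\eps$ and the rates in Theorem \ref{T.4.3}. Note also that no soft interpolation with the uniform bound $\|\partial_t v^{\delta,\eps}\|_\infty\leq C$ can rescue this: Lipschitz continuity of $v^{\delta,\eps}$ in time combined with spatial regularity of the gradient only yields time exponents at most $1/2$, and the embedding route caps the time exponent of $\nabla\tilde v$ at $1/2$ no matter how large you take $p$.

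The missing ingredient is the bootstrap that occupies the second half of the paper's proof. After obtaining an initial uniform gradient H\"older estimate for $\tilde v$ on $\tilde Q_{1/2}$ (the paper cites Lieberman, Theorem 12.10, where you use $W^{2,1}_p$ plus embedding — either is fine for this step), the right-hand side $\tilde f$ becomes uniformly H\"older in space–time, and since the rescaled equation is uniformly parabolic with uniformly Lipschitz coefficients and $C^3$ terminal data, Schauder estimates give $\|\tilde v\|_{C^{1+\eta/2,2+\eta}(\tilde Q_{1/2})}\leq C$. The interpolation result \cite[Exercise 8.8.6]{krylov1996lectures} then shows $\nabla\tilde v$ is $\tfrac{1+\eta}{2}$-H\"older in time, i.e.\ strictly better than exponent $1/2$; feeding this back through Schauder once more yields $\nabla\tilde v\in C^{\alpha,\alpha}(\tilde Q_{1/2})$ for every $\alpha\in(0,1)$ with constants independent of $\delta$, $\delta_i$ and $(t_0,x_0)$. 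Only then does undoing the scaling (spatial factor $r\delta_i^{1/2}\gtrsim\delta$, time factor $r^2\sim\delta$) produce the isotropic bound $C\delta^{-(1+\alpha)}$ claimed in the lemma. Your write-up stops before this upgrade, so as it stands it proves a weaker estimate than the one stated and used later in the paper.
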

\begin{proof}
Let us fix $(t_0,x_0)$, and we assume $x_0=0$ after shifting. Let $\delta_i:=\Sigma_i(t_0,0)+\delta\geq\delta$, and for $r\in(0,1)$ define
\[
\tilde v(t,x):= v^{\delta,\eps}\left(t_0+r^2t,\delta_1^{1/2}r x_1,\ldots,\delta_d^{1/2}r x_d\right).
\]
Then $\tilde v$ satisfies
\beq\lb{9.3}
\partial_t\tilde  v   +\frac12\sum_{i=1}^d \tilde\Sigma_i( t, x) \partial_{x_i}^2\tilde  v=\tilde f(t,x)
\eeq
where
\begin{align*}
\tilde \Sigma_i(t,x)&:=\frac{\Sigma_i\left(t_0+r^2t,\delta_1^{1/2}r x_1,\ldots,\delta_d^{1/2}r x_d\right)+\delta}{\delta_i},
\\
\tilde f(t,x)&:= r^2 H(\cdot,\cdot,\nabla v^{\delta,\eps})\left(t_0+r^2 t,\delta_1^{1/2}r x_1,\ldots,\delta_d^{1/2}r x_d\right).    
\end{align*}
We will consider $\tilde v$ in the domain of $\tilde Q_1$ where for $R>0$,
\beq\lb{9.5}
\tilde Q_R:=(-R,\min\{R,(T_\eps-t_0)/r^2\})\times B_{R}.
\eeq
The corresponding domain for $v^{\delta,\eps}$ is then given by
\[
Q_1^r:=\left\{\left(t_0+r^2t,\delta_1^{1/2}r x_1,\ldots,\delta_d^{1/2}r x_d\right)\,\big|\, (t,x)\in\tilde Q_1\right\}.
\]

\quad Since $\sigma_i={\Sigma_i^{1/2}}$ is assumed to be Lipschitz continuous and $\Sigma_i(t_0,0)=\delta_i-\delta$, there exists $C>0$ such that for any $t$ and $x$ we have
\[
|\Sigma_i(t,x)-\Sigma_i(t_0,0)|\leq C(|t|+|x|)({\delta_i^{1/2}}+|x|+|t|).
\]
Note that for $(t,x)\in Q_1^r$, $|t|\leq r^2$ and $|x|\leq Cr$ as $\delta_i$ are uniformly bounded.
Thus, if we pick $r=\delta^{1/2}/{C_1}$ for some $C_1$ large enough but independent of $\delta_i$ and $\delta$, 
\beq\lb{9.6}
\Sigma_i(\cdot,\cdot)+\delta\in  \left[\frac{\delta_i}2,2\delta_i\right] \text{ in } Q_1^r, \quad \text{and}\quad\tilde\Sigma_i(\cdot,\cdot)\in  \left[1/2,2\right]\text{ in } \tilde Q_1.
\eeq
Moreover, we claim that $\tilde\Sigma_i$ is Lipschitz continuous in $\tilde Q_1$. Indeed, for $(t,x),(t+s,x+y)\in Q_1^r$, since $\sigma_i={\Sigma_i^{1/2}}$ is Lipschitz continuous and by \eqref{9.6}, we have
\[
|\Sigma_i(t+s,x+y)-\Sigma_i(t,x)|\leq C(|s|+|y|)(\sigma_i(t+s,x+y)+\sigma_i(t,x))\leq C{\delta_i^{1/2}}(|s|+|y|).
\]
This implies that for $(t,x),(t+s,x+y)\in \tilde Q_1$
\beq\lb{9.7}
|\tilde\Sigma_i(t+s,x+y)-\tilde\Sigma_i(t,x)|\leq \frac{C{\delta_i^{1/2}}(r^2|s|+r|y|)}{\delta_i}\leq C(|s|+|y|),
\eeq
where we used $r={\delta^{1/2}}/C_1$ in the last inequality.

\quad Next, it is clear that $\tilde f$ is uniformly bounded, and $\tilde v$ is uniformly bounded and Lipschitz continuous. It follows from \cite[Theorem 12.10]{lieberman} that there exist $\eta\in (0,1)$ and $C>0$ such that
\[
\|\nabla \tilde v\|_{C^{\eta}(\tilde Q_{1/2})}\leq  C
\]
where $\tilde Q_{1/2}$ is defined in \eqref{9.5} and the constant is independent of $(t_0,x_0)$ and $\delta$.
After rescaling and using that $(t_0,x_0)$ is arbitrary, we get
\[
\|\nabla v^{\delta,\eps}\|_{C^{\eta}((0,T_\eps)\times\bbR^d)}\leq  C\delta^{-(1+\eta)}.
\]

\quad Having uniform H\"{o}lder continuity of $\nabla\tilde v$ in both space and time in $\tilde Q_{1/2}$, we have that $\tilde f$ is also uniformly H\"{o}lder continuous in both space and time. Moreover, by \eqref{9.6} and \eqref{9.7}, equation \eqref{9.3} is uniformly non-degenerate in $\tilde Q_1$, and $\tilde\Sigma_i$ is uniformly Lipschitz continuous for each $i$. Also, note that the terminal data is $C^3$ since $g$ is $C^3$ and $\tilde\Sigma_i$ is uniformly Lipschitz continuous by \eqref{9.7}. Hence, it follows from Schauder's estimate that there exists $\eta\in (0,1)$ such that
\[
\|\tilde v\|_{C^{1+\eta/2,2+\eta}(\tilde Q_{1/2})}\leq C.
\]
This and an interpolation result from  \cite[Exercise 8.8.6]{krylov1996lectures} show that $\nabla\tilde v$ is uniformly $\frac{1+\eta}{2}$-H\"{o}lder continuous in time. It is clear that $\nabla \tilde v$ is Lipschitz in space with uniformly finite Lipschitz constant. Thus, for any $\alpha\in (0,1)$, we have $\nabla \tilde v\in C^{\alpha/2,\alpha}$ with H\"{o}lder norm independent of $i$, $\delta_i$, $\delta$ and $(t_0,x_0)$. By Schauder estimates once more, $\tilde v\in C^{1+\alpha/2,1+\alpha}$ and is uniformly bounded in the space in $\tilde Q_1$ with H\"{o}lder norm independent of $i$, $\delta_i$, $\delta$ and $(t_0,x_0)$. 

\quad By  \cite[Exercise 8.8.6]{krylov1996lectures} again, $\nabla\tilde v$ is uniformly $\frac{1+\alpha}{2}$-H\"{o}lder continuous in time.
Thus, we get for any $\alpha\in (0,1)$, $\nabla \tilde v\in C^{\alpha,\alpha}(\tilde Q_{1/2})$ with  H\"{o}lder norm independent of $i$, $\delta_i$, $\delta$ and $(t_0,x_0)$. 
After rescaling, for each $\alpha\in (0,1)$ and $\beta\in (0,1]$, we obtain
\[
\|\nabla v^{\delta,\eps}(\cdot,x)\|_{C^\alpha}\leq  C\delta^{-(1+\alpha)}\quad\text{ and }\quad \|\nabla v^{\delta,\eps}(t,\cdot)\|_{C^\beta}\leq  C\delta^{-(1+\beta)}
\]
uniformly for all $x$ and $t$.

\end{proof}

\subsection{Convolution regularization}
We use $v^{\delta,\eps}$ to approximate the solution to the finite-difference scheme. We first use convolution to regularize $v^{\delta,\eps}$ with regularization parameter $\eps\in (0,1)$ and then we will optimize over $\eps$.

\quad Take a non-negative function $\zeta\in C_0^{\infty}(\bbR^{d+1})$ with support in $(-1,1)\times B_1$ and with unit integral. For any $\eps\in (0,1)$, define
\[
\zeta_\eps(t,x):=\eps^{-d-1}\zeta(t/\eps,x/\eps)
\]
and for any smooth function $u:\bbR^{d+1}\to \bbR$, define
\[
u_\eps(t,x):= \int_{\bbR^{d+1}}u(s,y)\zeta_\eps(t-s,x-y)\,dsdy.
\]
This is distinct from \cite{18Kry}, where a parabolic scale is applied. The difference is due to the fact that our solutions are Lipschitz continuous in both space and time, while the solutions in \cite{18Kry} are Lipschitz continuous in space and $\frac12$-H\"{o}lder continuous in time.

\quad Next, we estimate the difference between finite differences and derivatives for smooth functions. 
Again, let $u:\bbR^{d+1}\to \bbR$ be smooth. It follows from Taylor's formula that we have for some dimensional constant $C>0$,
\beq\lb{8.1}
|\partial_tu(t,x)-\partial_t^\tau u(t,x)|\leq C\tau \|\partial_t^2 u\|_\infty,
\eeq
for any $i=1,\ldots,d$,
\[
|\nabla u(t,x)-\nabla^h u(t,x)|+ |\partial_{x_i} u(t,x)-D_i^hu(t,x)| \leq Ch \|D^2_x u\|_\infty,
\]
and
\beq\lb{8.3}
|\partial_{x_i}^2 u(t,x)-\Delta_i^h u(t,x)|\leq Ch^2 \|D_x^4 u\|_\infty.
\eeq
In the last estimate, we applied the particular form of discrete second order derivatives.

\medskip

\quad Let $1>\delta\gg\eps\gg h>\tau>0$. Define
\beq\lb{conv}
u^{\delta}_{\eps}(t,x):=\int_{\bbR^{d+1}}v^{\delta,\eps}(s,y)\zeta_\eps(t-s,x-y)\,dsdy.
\eeq
Since $v^{\delta,\eps}$ is defined for $t\leq T+\eps$, $u^{\delta}_{\eps}$ is well-defined for all $(t,x)\in (0,T]\times\bbR^d$. 
Making use of the Lipschitz continuity of $v^{\delta,\eps}$, we immediately have the following properties.

\begin{lemma}\lb{L.9.5}
In the domain of $(t,x)\in (0,T]\times\bbR^d$,
\[
 \|\partial_t^2 u^{\delta}_{\eps}\|_\infty,\, \|D^2_x u^{\delta}_{\eps}\|_\infty
\leq C \eps^{-1},\quad
\|D_x^4 u^{\delta}_{\eps}\|_\infty\leq C\eps^{-3}.
\]
Consequently, we have
\[
|\partial_tu^{\delta}_{\eps}(t,x)-\partial_t^\tau u^{\delta}_{\eps}(t,x)|\leq C\tau \eps^{-1},
\]
and for any $i=1,\ldots,d$,
\[
|\partial_{x_i} u^{\delta}_{\eps}(t,x)-D_i^hu^{\delta}_{\eps}(t,x)| \leq Ch \eps^{-1},
\]
\[
|\partial_{x_i}^2 u^{\delta}_{\eps}(t,x)-\Delta_i^h u^{\delta}_{\eps}(t,x)|\leq Ch^2 \eps^{-3}.
\]
\end{lemma}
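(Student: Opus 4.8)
The plan is a routine mollification argument, so I will be brief. The key mechanism is: in every derivative of $u^\delta_\eps = v^{\delta,\eps} * \zeta_\eps$, transfer exactly one derivative onto $v^{\delta,\eps}$ (where it is absorbed by the uniform Lipschitz bound of Lemma \ref{L.9.1}) and let each of the remaining derivatives fall on $\zeta_\eps$, costing one power of $\eps^{-1}$ apiece. This is legitimate because $v^{\delta,\eps}$ is globally Lipschitz in $(t,x)$ uniformly in $\delta$, so its first distributional derivatives are bounded; since $\delta \gg \eps$ and $v^{\delta,\eps}$ is defined up to time $T+\eps$, the mollification $u^\delta_\eps$ is well-defined and smooth on all of $(0,T]\times\bbR^d$.

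First I would establish the three $L^\infty$ bounds. Since $v^{\delta,\eps}$ has bounded distributional gradient and time derivative, differentiation commutes with convolution, so $\partial_t u^\delta_\eps = (\partial_t v^{\delta,\eps})*\zeta_\eps$ and $\partial_{x_i} u^\delta_\eps = (\partial_{x_i} v^{\delta,\eps})*\zeta_\eps$. For the second-order estimates, write for instance
\[
\partial_t^2 u^\delta_\eps(t,x)=\int_{\bbR^{d+1}}(\partial_s v^{\delta,\eps})(s,y)\,(\partial_t\zeta_\eps)(t-s,x-y)\,ds\,dy,
\]
and combine $\|\partial_s v^{\delta,\eps}\|_\infty\leq C$ with the scaling identity $\|\partial_t\zeta_\eps\|_{L^1(\bbR^{d+1})}=\eps^{-1}\|\partial_t\zeta\|_{L^1(\bbR^{d+1})}$ (which follows from $\zeta_\eps(t,x)=\eps^{-d-1}\zeta(t/\eps,x/\eps)$) to obtain $\|\partial_t^2 u^\delta_\eps\|_\infty\leq C\eps^{-1}$. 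The bound $\|D^2_x u^\delta_\eps\|_\infty\leq C\eps^{-1}$ is identical, putting one spatial derivative on $v^{\delta,\eps}$ and one on $\zeta_\eps$. For $\|D^4_x u^\delta_\eps\|_\infty$ one places a single spatial derivative on $v^{\delta,\eps}$ and the other three on $\zeta_\eps$, using $\|D^3_x\zeta_\eps\|_{L^1}=\eps^{-3}\|D^3_x\zeta\|_{L^1}$, which yields the bound $C\eps^{-3}$.

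The remaining (``consequently'') estimates are then immediate: apply the Taylor-expansion inequalities \eqref{8.1} and \eqref{8.3}, together with the analogous first-order estimate, to the smooth function $u^\delta_\eps$ and substitute the three bounds just proved. This gives $|\partial_t u^\delta_\eps-\partial_t^\tau u^\delta_\eps|\leq C\tau\|\partial_t^2 u^\delta_\eps\|_\infty\leq C\tau\eps^{-1}$, $|\partial_{x_i} u^\delta_\eps-D_i^h u^\delta_\eps|\leq Ch\|D^2_x u^\delta_\eps\|_\infty\leq Ch\eps^{-1}$, and $|\partial_{x_i}^2 u^\delta_\eps-\Delta_i^h u^\delta_\eps|\leq Ch^2\|D^4_x u^\delta_\eps\|_\infty\leq Ch^2\eps^{-3}$. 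There is no genuine obstacle; the only point that deserves a line of justification is that precisely one derivative (and no more) may be passed onto $v^{\delta,\eps}$ — no $\delta$-uniform bound on second derivatives of $v^{\delta,\eps}$ is available, which is exactly why the surplus derivatives must sit on the mollifier and why the negative powers of $\eps$ appear.
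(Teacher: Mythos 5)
Your proof is correct and follows essentially the same route as the paper: one derivative is placed on $v^{\delta,\eps}$ and absorbed by the uniform Lipschitz bound of Lemma \ref{L.9.1}, the remaining derivatives fall on the mollifier at a cost of $\eps^{-1}$ each via the $L^1$ scaling of $\zeta_\eps$, and the finite-difference estimates then follow from \eqref{8.1}--\eqref{8.3}. The paper's own proof is exactly this argument, with the fourth-derivative case written out as the representative computation.
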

\begin{proof}
The first three inequalities follow from the uniform space-time Lipschitz continuity of $v^{\delta,\eps}$ and the properties of convolution. For instance,
\begin{align*}
\left|\partial_{x_i}^4 u^{\delta}_{\eps}(t,x)\right|&=\left|\int_{\bbR^{d+1}}\partial_{x_i}v^{\delta,\eps}(t-s,x-y)\partial_{x_i}^3\zeta_\eps(s,y)\,dyds\right|\\
&\leq C\int_{\bbR^{d+1}}\left|\partial_{x_i}^3\zeta_\eps(s,y)\right|\,dyds\leq C\eps^{-3}.
\end{align*}
The rest of the claims follow from \eqref{8.1}--\eqref{8.3}.

\end{proof}

\begin{proposition}\lb{P.9.6}
For any $\alpha\in(0,1)$, there exists $C>0$ such that
\[
\left|\partial_t u^{\delta}_{\eps}(t,x) + H(t,x, \nabla u^{\delta}_{\eps})+\frac12\sum_{i=1}^d(\Sigma_i(t,x) +\delta) \partial_{x_i}^2 u^{\delta}_\eps(t,x)\right|\leq C\delta^{-(1+\alpha)}\eps^\alpha,
\]
and
\begin{multline*}
\left|\partial^\tau_t u^{\delta}_{\eps}(t,x) + H(t,x, \nabla^h u^{\delta}_{\eps})+\frac12\sum_{i=1}^d(\Sigma_i(t,x) +\delta) \Delta_i^h u^{\delta}_\eps(t,x)\right|\\
\leq C\left(\tau\eps^{-1}+ h^2\eps^{-3}+h\eps^{-1}+\delta^{-(1+\alpha)}\eps^\alpha\right).
\end{multline*}
\end{proposition}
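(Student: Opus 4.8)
The plan is to establish the first (continuous) inequality directly from the PDE that $v^{\delta,\eps}$ solves, and then derive the second (discrete) inequality by inserting the finite-difference error estimates of Lemma \ref{L.9.5}. For the first inequality, recall that $v^{\delta,\eps}$ satisfies \eqref{9.1}, so at every point
\[
\partial_t v^{\delta,\eps} + H(\cdot,\cdot,\nabla v^{\delta,\eps}) + \tfrac12\sum_{i=1}^d(\Sigma_i+\delta)\partial_{x_i}^2 v^{\delta,\eps}=0.
\]
Convolving this identity against $\zeta_\eps$ commutes with the linear operators $\partial_t$, $\partial_{x_i}$, $\partial_{x_i}^2$, so the only terms that fail to convolve cleanly are $H(\cdot,\cdot,\nabla v^{\delta,\eps})$ and the variable coefficient $\Sigma_i(t,x)$. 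Thus I would write the left-hand side of the first inequality as the sum of two commutator-type errors: first, $H(t,x,\nabla u^\delta_\eps) - \big(H(\cdot,\cdot,\nabla v^{\delta,\eps})\big)_\eps$, and second, $\tfrac12\sum_i\big[(\Sigma_i(t,x)+\delta)\partial_{x_i}^2 u^\delta_\eps - \big((\Sigma_i+\delta)\partial_{x_i}^2 v^{\delta,\eps}\big)_\eps\big]$. For the first, use the regularity of $H$ from Lemma \ref{L.2.3} — specifically the $\|f\|_\infty|p_1-p_2|$ and $\|c\|_{\Lip},\|f\|_{\Lip}$ terms — together with the fact that $\|\nabla u^\delta_\eps-\nabla v^{\delta,\eps}\|_\infty \le C\eps^\alpha\|\nabla v^{\delta,\eps}\|_{C^\alpha}$ and the analogous $C^\alpha$-in-$(t,x)$ modulus; by Lemma \ref{L.9.3} these are bounded by $C\delta^{-(1+\alpha)}\eps^\alpha$. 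For the second, since $\Sigma_i$ is Lipschitz and $\|\partial_{x_i}^2 v^{\delta,\eps}\|$ need not be controlled, I instead keep the second-order derivative on $u^\delta_\eps$ and move the convolution off $\partial_{x_i}^2 v^{\delta,\eps}$ onto $\Sigma_i$: the standard commutator estimate $\|(\Sigma_i \, w)_\eps - \Sigma_i\, w_\eps\|_\infty \le C\eps\|\nabla\Sigma_i\|_\infty\|w\|$ would need $w=\partial_{x_i}^2 v^{\delta,\eps}$ bounded, which we lack; so instead I integrate by parts, writing $\partial_{x_i}^2 v^{\delta,\eps} = \partial_{x_i}(\partial_{x_i}v^{\delta,\eps})$ and transferring one derivative onto $\zeta_\eps$ and $\Sigma_i$, so that only $\nabla v^{\delta,\eps}$ (which is bounded, uniformly in $\delta$ by Lemma \ref{L.9.1}, and Hölder by Lemma \ref{L.9.3}) appears. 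This again yields a bound $C\delta^{-(1+\alpha)}\eps^\alpha$, completing the first inequality.

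For the second inequality, I would simply compare the discrete operators to the continuous ones at $u^\delta_\eps$. By the triangle inequality,
\begin{align*}
&\Big|\partial^\tau_t u^\delta_\eps + H(t,x,\nabla^h u^\delta_\eps) + \tfrac12\sum_i(\Sigma_i+\delta)\Delta_i^h u^\delta_\eps\Big|\\
&\quad\le \Big|\partial_t u^\delta_\eps + H(t,x,\nabla u^\delta_\eps) + \tfrac12\sum_i(\Sigma_i+\delta)\partial_{x_i}^2 u^\delta_\eps\Big| + |\partial^\tau_t u^\delta_\eps - \partial_t u^\delta_\eps|\\
&\qquad + |H(t,x,\nabla^h u^\delta_\eps) - H(t,x,\nabla u^\delta_\eps)| + \tfrac12\sum_i(\Sigma_i+\delta)|\Delta_i^h u^\delta_\eps - \partial_{x_i}^2 u^\delta_\eps|.
\end{align*}
The first term on the right is $\le C\delta^{-(1+\alpha)}\eps^\alpha$ by the first part. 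The second term is $\le C\tau\eps^{-1}$ by Lemma \ref{L.9.5}. For the third term, Lemma \ref{L.2.3} gives $|H(t,x,\nabla^h u^\delta_\eps)-H(t,x,\nabla u^\delta_\eps)| \le \|f\|_\infty|\nabla^h u^\delta_\eps - \nabla u^\delta_\eps| \le Ch\eps^{-1}$, again by Lemma \ref{L.9.5}. The fourth term is $\le C h^2\eps^{-3}$ since $\Sigma_i+\delta$ is uniformly bounded (by \eqref{N1} and $\delta<1$) and $|\Delta_i^h u^\delta_\eps - \partial_{x_i}^2 u^\delta_\eps| \le Ch^2\eps^{-3}$ by Lemma \ref{L.9.5}. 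Summing these four bounds gives exactly the claimed estimate.

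The main obstacle is the commutator between the variable diffusion coefficient $\Sigma_i(t,x)$ and the mollification, precisely because we have no a priori control on $D^2 v^{\delta,\eps}$ uniformly in $\delta$ — only on $\nabla v^{\delta,\eps}$ and its Hölder seminorm (Lemmas \ref{L.9.1} and \ref{L.9.3}). The resolution is to never let an unmollified second derivative of $v^{\delta,\eps}$ appear: integrate by parts in the convolution so that the operator $\partial_{x_i}^2$ acting on $v^{\delta,\eps}$ is rewritten, after the mollification, as $\partial_{x_i}$ acting on $\nabla v^{\delta,\eps}$ contracted against $\partial_{x_i}\zeta_\eps$ and $\Sigma_i$, so that the Hölder bound $\|\nabla v^{\delta,\eps}\|_{C^\alpha}\le C\delta^{-(1+\alpha)}$ can be invoked and each extra derivative on $\zeta_\eps$ costs one power of $\eps^{-1}$, with the Hölder modulus buying back $\eps^\alpha$. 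The net loss of $\eps^{-1+\alpha}$ combined with the gain $\delta^{-(1+\alpha)}$... is precisely balanced so that the error is $C\delta^{-(1+\alpha)}\eps^\alpha$, which is small when $\eps\ll\delta^{(1+\alpha)/\alpha}$. Everything else is a routine application of Taylor's theorem through Lemma \ref{L.9.5} and the Lipschitz regularity of $H$ through Lemma \ref{L.2.3}.
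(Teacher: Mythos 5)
Your proposal is correct and follows essentially the same route as the paper: convolve the equation \eqref{9.1} with $\zeta_\eps$, bound the Hamiltonian commutator via the H\"older estimate of Lemma \ref{L.9.3}, handle the diffusion commutator by integrating by parts so the derivative lands on $(\Sigma_i(t,x)-\Sigma_i(t-s,x-y))\zeta_\eps(s,y)$ rather than on $\nabla v^{\delta,\eps}$, and then deduce the discrete inequality from Lemma \ref{L.9.5} exactly as the paper does. The only point to state precisely is the bookkeeping in that second commutator: the $\eps^{-1}$ from $\nabla\zeta_\eps$ is cancelled by the Lipschitz bound $|\Sigma_i(t,x)-\Sigma_i(t-s,x-y)|\le C\eps$ on the support of $\zeta_\eps$ (not by the H\"older gain, as your ``net loss of $\eps^{-1+\alpha}$'' phrasing suggests), and to invoke the H\"older modulus at all you should replace $\partial_{x_i}v^{\delta,\eps}(t-s,x-y)$ by $\partial_{x_i}v^{\delta,\eps}(t-s,x-y)-\partial_{x_i}v^{\delta,\eps}(t,x)$, which is legitimate because the weight $\partial_{y_i}\left[(\Sigma_i(t,x)-\Sigma_i(t-s,x-y))\zeta_\eps(s,y)\right]$ integrates to zero.
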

\begin{proof}
We convolute $\zeta_\eps$ to the equation \eqref{9.1} of $v^{\delta,\eps}$ to get   
\begin{align}\lb{9.8}
    \partial_t u^{\delta}_{\eps}(t,x) + H(t,x, \nabla u^{\delta}_{\eps})+\frac12\sum_{i=1}^d(\Sigma_i(t,x) +\delta) \partial_{x_i}^2 u^{\delta}_\eps(t,x)=X_1+X_2
\end{align}
where
\[
X_1:=\int_{\bbR^{d+1}}\left[H(t,x, \nabla u^{\delta}_{\eps})-H(\cdot,\cdot, \nabla v^{\delta,\eps})(t-s,x-y)\right]\zeta_\eps(s,y)\,dyds,
\]
\[
X_2:=\frac12\int_{\bbR^{d+1}} \sum_{i=1}^d\left[\Sigma_i(t,x) -\Sigma_i(t-s,x-y)\right] \partial_{x_i}^2 v^{\delta,\eps}(t-s,x-y) \zeta_\eps(s,y)\,dyds.
\]
Here, $X_1, X_2$ are commutation errors from the convolution with the standard kernel $\zeta_\eps$.
By Lemma \ref{L.9.3} and \eqref{conv}, for $(s,y)$ in a space-time ball of radius $\eps$ and center $(0,0)$, and for any $\alpha\in (0,1)$, there exists $C>0$ such that
\begin{align*}
&\quad\, \left|\nabla u_\eps^\delta(t,x)-\nabla v^{\delta,\eps}(t-s,x-y)\right|\\
&=\left|\int_{\bbR^{d+1}}\nabla v^{\delta,\eps}(t-s',x-y')\zeta_\eps(s,y)\,ds'dy'-\nabla v^{\delta,\eps}(t-s,x-y)\right|\leq C\delta^{-(1+\alpha)}\eps^\alpha.  
\end{align*}
Then using that $H(\cdot,\cdot,\cdot)$ is Lipschitz with respect to its variables and the assumption that $\zeta_\eps$ is supported in a space-time ball of radius $\eps$, we obtain
\[
|X_1|\leq \int_{\bbR^{d+1}}(C\eps+C\delta^{-(1+\alpha)}\eps^\alpha)\zeta_\eps(s,y)\,dyds\leq C\delta^{-(1+\alpha)}\eps^\alpha.
\]

\quad As for $X_2$, we have
\begin{align*}
&|X_2|\\
=\,&\frac12\left|\int_{\bbR^{d+1}} \sum_{i=1}^d\left[\Sigma_i(t,x) -\Sigma_i(t-s,x-y)\right] \partial_{y_i}\partial_{x_i} v^{\delta,\eps}(t-s,x-y) \zeta_\eps(s,y)\,dyds\right|\\
=\,&\frac12\left|\int_{\bbR^{d+1}} \sum_{i=1}^d\partial_{y_i}\left[(\Sigma_i(t,x) -\Sigma_i(t-s,x-y))\zeta_\eps(s,y)\right] \partial_{x_i} v^{\delta,\eps}(t-s,x-y) \,dyds\right|\\
=\,&\frac12\left|\int_{\bbR^{d+1}} \sum_{i=1}^d\partial_{y_i}\left[(\Sigma_i(t,x) -\Sigma_i(t-s,x-y))\zeta_\eps(s,y)\right] \partial_{x_i} \left[v^{\delta,\eps}(t-s,x-y) - v^{\delta,\eps}(t,x) \right]\,dyds\right|.    
\end{align*}
Note that for $(s,y)$ as the above,
\begin{align*}
&\quad\,\left|\partial_{y_i}\left[(\Sigma_i(t,x) -\Sigma_i(t-s,x-y))\zeta_\eps(s,y)\right]\right|\\
 & \leq    \left|\left[\Sigma_i(t,x) -\Sigma_i(t-s,x-y)\right]\partial_{y_i}\zeta_\eps(s,y)\right|+\left|\partial_{x_i}\Sigma_i(t-s,x-y)\zeta_\eps(s,y)\right|\\
 &\leq C\eps |\nabla \zeta_\eps(s,y)|+C\zeta_\eps(s,y),
\end{align*}
and, by Lemma \ref{L.9.3},
\[
\left|\partial_{x_i} v^{\delta,\eps}(t-s,x-y) -\partial_{x_i} v^{\delta,\eps}(t,x) \right|\leq C\delta^{-(1+\alpha)}\eps^\alpha.
\]
Hence
\[
|X_2|\leq C\delta^{-(1+\alpha)}\eps^\alpha\left(1+\eps\int_{\bbR^{d+1}}|\nabla \zeta_\eps(s,y)|\,dyds\right)\leq C\delta^{-(1+\alpha)}\eps^\alpha.
\]
Thus, we conclude the first claim from \eqref{9.8}.

\quad It follows from Lemma \ref{L.9.5} that
\[
\left|\partial_t u^{\delta}_{\eps}-\partial^\tau_t u^{\delta}_{\eps}\right|\leq C\tau\eps^{-1},\quad \left|\partial_{x_i}^2 u^{\delta}_\eps-\Delta_i^h u^{\delta}_\eps\right|\leq Ch^2\eps^{-3},
\]
and, also using the Lipscthiz regularity assumption on $H$,
\[
\left|H(t,x, \nabla u^{\delta}_{\eps})- H(t,x, \nabla^h u^{\delta}_{\eps})\right|\leq C|\nabla u^{\delta}_{\eps}-\nabla^h u^{\delta}_{\eps}|\leq Ch\eps^{-1}.
\]
Hence, these and the first claim yield the second claim.
\end{proof}

\quad As a corollary of the proposition, we can estimate the difference between $u^{\delta,\eps}$  and $V^{\tau,h}$.

\begin{corollary}\lb{C.4.5}
For any $\alpha\in(0,1)$, there exists $C>0$ such that
\[
\left|u^{\delta}_{\eps}-V^{\tau,h}\right|\leq C\nu_h^{-(1+\alpha)}h^{\alpha/2}.
\]
Actually, the right-hand side can be improved to
\[
C\min_{\eps\in (0,1]}\left( h^2\eps^{-3}+h\eps^{-1}+\nu_h^{-(1+\alpha)}\eps^\alpha\right).
\]
\end{corollary}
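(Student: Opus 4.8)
The plan is to set $\delta=\nu_h$, regard $u^{\nu_h}_{\eps}$ as an approximate solution of the discrete scheme \eqref{4.2}, and conclude via the comparison principle (Lemma \ref{L.2.1}). Under the assumptions of Theorem \ref{T.4.3}, the second estimate of Proposition \ref{P.9.6} with $\delta=\nu_h$ reads, on $\Omega^{\tau,h}_T$,
\[
\left|\partial_t^\tau u^{\nu_h}_{\eps}+H(t,x,\nabla^h u^{\nu_h}_{\eps})+\frac12\sum_{i=1}^d(\Sigma_i+\nu_h)\Delta_i^h u^{\nu_h}_{\eps}\right|\le R(\eps),
\]
and, since $\tau<h$ (part of the ordering $1>\delta\gg\eps\gg h>\tau>0$), we absorb $\tau\eps^{-1}$ into $h\eps^{-1}$ and take $R(\eps):=C\bigl(h^2\eps^{-3}+h\eps^{-1}+\nu_h^{-(1+\alpha)}\eps^{\alpha}\bigr)$. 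Equivalently, with $\calF_t$ as in \eqref{calF}, $\bigl|u^{\nu_h}_{\eps}(t-\tau,x)-\calF_t(u^{\nu_h}_{\eps}(t,\cdot))(x)\bigr|\le\tau R(\eps)$ on $\Omega^{\tau,h}_T$.

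Next I would bound the terminal layer. Since $v^{\nu_h,\eps}(T+\eps,\cdot)=g$ while $v^{\nu_h,\eps}$ is Lipschitz with a constant independent of $\delta$ (Lemma \ref{L.9.1}) and $\zeta_{\eps}$ is supported in the space-time ball of radius $\eps$, the definition \eqref{conv} gives $|u^{\nu_h}_{\eps}(T,\cdot)-g|\le C\eps$. As $\eps\le1$ and $\nu_h\le1$ we have $\eps\le\nu_h^{-(1+\alpha)}\eps^{\alpha}$, so after enlarging the constant $C$ in $R(\eps)$ if necessary we also have $|u^{\nu_h}_{\eps}(T,\cdot)-g|\le R(\eps)$.

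For the comparison step, observe that finite differences annihilate functions constant in $x$, so $\calF_t(U+c)=\calF_t(U)+c$ for any constant $c$. Hence
\[
\Phi^{\pm}(t,x):=u^{\nu_h}_{\eps}(t,x)\pm R(\eps)\,(T-t+1)
\]
are, respectively, a bounded supersolution and a bounded subsolution of \eqref{4.2}: $\Phi^-(T,\cdot)\le g\le\Phi^+(T,\cdot)$ by the previous paragraph, and
\[
\Phi^{\pm}(t-\tau,x)-\calF_t(\Phi^{\pm}(t,\cdot))(x)=\bigl[u^{\nu_h}_{\eps}(t-\tau,x)-\calF_t(u^{\nu_h}_{\eps}(t,\cdot))(x)\bigr]\pm\tau R(\eps),
\]
which is $\ge0$ for $\Phi^+$ and $\le0$ for $\Phi^-$ because the bracket has absolute value at most $\tau R(\eps)$. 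The comparison principle (Lemma \ref{L.2.1}) then gives $\Phi^-\le V^{\tau,h}\le\Phi^+$ on $\Omega^{\tau,h}_T$, i.e.
\[
\bigl|u^{\nu_h}_{\eps}-V^{\tau,h}\bigr|\le R(\eps)\,(T+1)=C'\bigl(h^2\eps^{-3}+h\eps^{-1}+\nu_h^{-(1+\alpha)}\eps^{\alpha}\bigr),
\]
with $C'$ depending on $d$, $\alpha$, $T$ and the data. Taking the infimum over $\eps\in(0,1]$ gives the improved bound, and the choice $\eps=h^{1/2}$ — for which $h^2\eps^{-3}=h\eps^{-1}=h^{1/2}\le h^{\alpha/2}\le\nu_h^{-(1+\alpha)}h^{\alpha/2}$ — gives the first displayed estimate.

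I do not anticipate a genuine obstacle, since the analytic content is already contained in Proposition \ref{P.9.6}. The points that need care are the identification $\delta=\nu_h$ (so that $u^{\nu_h}_{\eps}$ solves \eqref{4.2} up to the residual $R(\eps)$), the estimate and absorption of the $O(\eps)$ terminal layer, and the bookkeeping that lets one drop the $\tau\eps^{-1}$ term (via $\tau<h$) so as to match the stated form of the bound.
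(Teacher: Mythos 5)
Your proof is correct and takes essentially the same route as the paper: set $\delta=\nu_h$, use the second estimate of Proposition \ref{P.9.6} to bound the one-step residual of $u^{\nu_h}_{\eps}$ in the scheme \eqref{4.2}, build barriers of the form $u^{\nu_h}_{\eps}\pm(\text{residual})\cdot(T-t)$ plus an $O(\eps)$ terminal-layer correction, conclude by the discrete comparison principle (Lemma \ref{L.2.1}), and finally choose $\eps=h^{1/2}$. The only cosmetic differences are that the paper keeps the terminal correction as a separate term $C_0\eps$ instead of absorbing it into $R(\eps)$ via $\eps\le \nu_h^{-(1+\alpha)}\eps^{\alpha}$, and drops $\tau\eps^{-1}$ the same way you do (via $\tau<h$).
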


\begin{proof} 
Let us take $\delta:=\nu_h$. Denote 
\[
\eps_{\delta,h,\tau}:=C\left(\tau\eps^{-1}+ h^2\eps^{-3}+h\eps^{-1}+\delta^{-(1+\alpha)}\eps^\alpha\right)
\]
from Proposition \ref{P.9.6}, and for some $C_0>0$, then set
\[
u_1(t,x):=u_\eps^{\delta}(t,x)+\eps_{\delta,h,\tau}(T-t)+C_0\eps,
\]
and
\[
u_2(t,x):=u_\eps^{\delta}(t,x)-\eps_{\delta,h,\tau}(T-t)-C_0\eps.
\]
Then, since $\delta=\nu_h$, it follows from the proposition that $u_1$ and $u_2$ are, respectively, super- and sub- solutions to \eqref{4.1} in the domain of $(0,T]$. By Lemma \ref{L.9.1} and \eqref{conv}, $v^{\delta,\eps}$ and then $u_\eps^{\delta}$ is Lipschitz continuous with Lipschitz uniform constant and $v^{\delta,\eps}(T+\eps,x)=g(x)$. Therefore, we can pick $C_0$  sufficiently large such that
\[
u_2(T,x)\leq V^{\tau,h}(T,x)\leq u_1(T,x).
\]
It follows from the monotonicity property of the discrete scheme,
\[
u_\eps^{\delta}(t,x)-C\eps_{\delta,h,\tau}\leq u_2(t,x)\leq V^{\tau,h}(t,x)\leq u_1(t,x)\leq u_\eps^{\delta}(t,x)+C\eps_{\delta,h,\tau}.
\]
The  conclusion follows after taking $\eps:=h^{1/2}$.
\end{proof}

\begin{remark}\lb{R.2}
If the diffusion term is non-degenerate, that is, for some $c_0\in (0,1)$ we have $1/c_0\geq \Sigma_i\geq c_0$ for all $i=1,\ldots,d$, then we take $\nu_h=0$ and \eqref{N2} still holds when $\tau\ll h^2\ll 1$. We re-define $v^{c_0,\eps}$ to be the solution to \eqref{9.1} with $\delta=c_0$ and with $\Sigma_i-c_0$ in place of $\Sigma_i$. 
For $u_\eps^{c_0}$ defined in \eqref{conv} with the above $v^{c_0,\eps}$, the result of Proposition \ref{P.9.6} holds the same with $c_0$ in place of $\delta$. It follows from the proof of Corollary \ref{C.4.5} that, for $V^{\tau,h}$ solving \eqref{4.2} with $\nu_h=0$, we have
\[
\left|u^{\delta}_{\eps}-V^{\tau,h}\right|\leq C\min_{\eps\in (0,1]}\left( h^2\eps^{-3}+h\eps^{-1}+\eps^\alpha\right)\leq Ch^{\alpha/2}
\]
with $C$ independent of $h$.
\end{remark}

\subsection{The quantitative convergence result}
In this subsection, we establish the convergence of the discrete equations for both cases: whether 
$\Sigma$ is uniformly elliptic or when it is degenerate.
Let us mention that it was proved in \cite{10DonKry,DonKry,18Kry} that 
\[
\sup_{(t,x)\in \Omega^{\tau,h}_T} |V^{\tau,h}(t,x)-v(t,x)|\leq C(\tau^{1/4}+h^{1/2})\quad\text{ for some $C=C(T)>0$},
\]
where $v$ solves a degenerate parabolic Bellman (convex) equation and $V^{\tau,h}$ is its space-time finite difference approximation obtained using an implicit scheme. Convexity is essentially needed in the papers.

\quad We now proceed to prove Theorem \ref{T.4.3}.
In fact, we will prove the following theorem, which implies 
Theorem \ref{T.4.3} right away.
\begin{theorem}
Assume \eqref{N2}. 
Assume further that $c(t,x,\alpha,\beta),\,f(t,x,\alpha,\beta),\,\sigma(t,x)$ are uniformly Lipschitz in $(t,x)$, and $\|g\|_{C^3}<\infty$.
Let \( V^{\tau,h} \) and \( v \) be the solutions to \eqref{4.2} and \eqref{3.1}, respectively. 
For any $\alpha\in(0,1)$, there exists $C>0$ depending only on $d$, $\|g\|_{C^3}$, {$\|c\|_{\rm Lip}$, $\|f\|_{\rm Lip}$,  and $\|\sigma\|_{\rm Lip}$} such that
\begin{align}\label{eq:rate-discrete}
\sup_{(t,x)\in \Omega^{\tau,h}_T} |V^{\tau,h}(t,x)-v(t,x)|&\leq C\min_{\eps\in (0,1]}\left( h^2\eps^{-3}+h\eps^{-1}+\nu_h^{-(1+\alpha)}\eps^\alpha\right)+C\nu_h^{1/2}\notag\\
&\leq C\nu_h^{-(1+\alpha)}h^{\alpha/2}+C\nu_h^{1/2}.
\end{align}
If the diffusion term is non-degenerate, that is, $1/c_0\geq \Sigma_i\geq c_0$ for some $c_0\in(0,1)$ independent of $h$ and $\tau$ and for all $i=1,\ldots, d$, then we can set $\nu_h=0$ and we obtain
\[
\sup_{(t,x)\in \Omega^{\tau,h}_T} |V^{\tau,h}(t,x)-v(t,x)|\leq
Ch^{\alpha/2}.
\]
In the case when $\Sigma$ is degenerate, that is, $\min_{1\leq i\leq d} \min_{(t,x)} \Sigma_i(t,x)=0$, by choosing $\eps=\nu_h^{(3+2\alpha)/({2\alpha})}$ and $\nu_h=h^{{4\alpha}/({9+7\alpha})}$ in the first inequality of \eqref{eq:rate-discrete}, we have
\[
\sup_{(t,x)\in \Omega^{\tau,h}_T} |V^{\tau,h}(t,x)-v(t,x)|\leq
Ch^{2\alpha/({9+7\alpha})}.
\]
\end{theorem}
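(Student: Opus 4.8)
The plan is to compare $v$ with $V^{\tau,h}$ by interpolating through the auxiliary functions $v^{\delta,\eps}$ and $u^{\delta}_\eps$ constructed above, taking $\delta=\nu_h$ (and $\delta=c_0$ in the non-degenerate case, as in Remark \ref{R.2}), via
\[
|V^{\tau,h}-v|\le |V^{\tau,h}-u^{\delta}_\eps|+|u^{\delta}_\eps-v^{\delta,\eps}|+|v^{\delta,\eps}-v|.
\]
The first term is exactly Corollary \ref{C.4.5} (or Remark \ref{R.2}): it is $\le C(h^2\eps^{-3}+h\eps^{-1}+\nu_h^{-(1+\alpha)}\eps^\alpha)$. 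The second term is $\le C\eps$, since $u^{\delta}_\eps$ is the mollification of $v^{\delta,\eps}$ at scale $\eps$ and $v^{\delta,\eps}$ is Lipschitz uniformly in $\delta$ by Lemma \ref{L.9.1}; and $C\eps\le C\nu_h^{-(1+\alpha)}\eps^\alpha$ because $\eps,\nu_h<1$. Thus everything reduces to estimating the third term.

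The core new estimate is the quantitative vanishing-viscosity bound
\[
|v^{\delta,\eps}-v|\le C\bigl(\sqrt\delta+\eps\bigr)\qquad\text{on }(0,T]\times\bbR^d,
\]
which I would establish by a comparison argument that uses no bound on second derivatives (none is available when $\Sigma$ is degenerate). First I would remove the terminal-time mismatch: since $v^{\delta,\eps}$ is Lipschitz in $t$ uniformly in $\delta$ and $v^{\delta,\eps}(T_\eps,\cdot)=g$, on $(0,T]\times\bbR^d$ it solves \eqref{9.1} with terminal data at $T$ within $C\eps$ of $g$ (equivalently, shift time by $\eps$ and incur an $O(\eps)$ error in the coefficients, as $H$ and $\sigma$ are Lipschitz in $t$). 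Then I would regularize $v$ by sup- and inf-convolutions in $(t,x)$ at scale $\mu$: by the Lipschitz regularity of $H$ in $(t,x)$ (Lemma \ref{L.2.3}) and of $\sigma$, these remain sub-/super-solutions of \eqref{3.1} up to an error controlled through the Lipschitz structure, and they are semiconvex/semiconcave so their Laplacians are bounded below/above by $\mp d/\mu$; adding a space-independent corrector affine in $t$ with slope of order $\delta/\mu+\mu$ turns them into genuine super-/sub-solutions of the more diffusive equation \eqref{9.1}, whence the comparison principle for \eqref{9.1} (standard, given $H$ Lipschitz in $x$ and $\sigma$ Lipschitz) yields $|v^{\delta,\eps}-v|\le C(\mu+\delta/\mu+\eps)$; optimizing $\mu=\sqrt\delta$ gives the claim. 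Equivalently this can be run as a doubling-of-variables argument in which the penalization scale is balanced against $\delta$. I expect this to be the main obstacle: the degeneracy forbids the naive ``mollify and substitute into \eqref{3.1}'' estimate, which would only yield the far worse error $O(\delta\eps^{-1})$ — useless in the regime $\eps\ll\delta$ that we actually need — and the $x$-dependence of $\Sigma=\sigma\sigma^T$ forces one to track the Lipschitz constant of $\sigma$ carefully throughout the convolution/comparison bookkeeping; everything else is soft.

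It remains to assemble and optimize. For each admissible $\eps$ (outside the admissible range the right-hand side below is $\gtrsim 1$, hence trivial) one obtains $|V^{\tau,h}-v|\le C(h^2\eps^{-3}+h\eps^{-1}+\nu_h^{-(1+\alpha)}\eps^\alpha)+C\sqrt{\nu_h}$, and taking the infimum over $\eps$ gives the first line of \eqref{eq:rate-discrete}; the choice $\eps=h^{1/2}$ then gives the second line. In the non-degenerate case one sets $\nu_h=0$ and uses the variant of Remark \ref{R.2}, where $v^{c_0,\eps}$ already carries the true diffusion $\Sigma$, so there is no $\sqrt\delta$ loss and $|v^{c_0,\eps}-v|\le C\eps$; with $\eps=h^{1/2}$ this gives $|V^{\tau,h}-v|\le Ch^{\alpha/2}$ with $C$ independent of $h$ and $\tau$. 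In the degenerate case, starting from the first line of \eqref{eq:rate-discrete} one takes $\eps=\nu_h^{(3+2\alpha)/(2\alpha)}$, making $\nu_h^{-(1+\alpha)}\eps^\alpha=\nu_h^{1/2}$, and then $\nu_h=h^{4\alpha/(9+7\alpha)}$, which equalizes $h^2\eps^{-3}$ and $\nu_h^{1/2}$ (both equal to $h^{2\alpha/(9+7\alpha)}$) while $h\eps^{-1}$ is of strictly higher order; one checks $\tau\ll h\ll\eps\ll\nu_h<1$ and that \eqref{N2} holds for $h$ small, which completes the proof.
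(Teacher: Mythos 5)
Your outer architecture coincides with the paper's: the triangle inequality through $u^{\delta}_{\eps}$ and $v^{\delta,\eps}$ with $\delta=\nu_h$ (and $\delta=c_0$, $\nu_h=0$ in the non-degenerate case, exactly as in Remark \ref{R.2}), Corollary \ref{C.4.5} for $|V^{\tau,h}-u^{\delta}_{\eps}|$, the $O(\eps)$ mollification bound from Lemma \ref{L.9.1}, and the same parameter choices $\eps=h^{1/2}$, resp.\ $\eps=\nu_h^{(3+2\alpha)/(2\alpha)}$, $\nu_h=h^{4\alpha/(9+7\alpha)}$ (your exponent bookkeeping checks out). The gap is in your proof of the one genuinely new estimate, $|v-v^{\delta,\eps}|\le C(\delta^{1/2}+\eps)$, i.e.\ the paper's Proposition \ref{P.5.4}. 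You regularize $v$ by sup-/inf-convolution at scale $\mu$ and assert that the regularizations remain sub-/supersolutions of \eqref{3.1} ``up to an error controlled through the Lipschitz structure'' of $H$ and $\sigma$. For the first-order part this is fine, but for the second-order part it is exactly the step that fails when the degenerate diffusion depends on $x$: the sup-convolution is a subsolution of the equation with coefficients frozen at a nearby point $(s^*,y^*)$ with $|(s^*,y^*)-(t,x)|\le C\mu$, and unfreezing produces the commutation error $\frac12\Tr\left[(\Sigma(s^*,y^*)-\Sigma(t,x))X\right]$, where $X$ is the Hessian of the test function. Semiconvexity only gives the one-sided bound $X\ge -I/\mu$; the positive part of $X$ is unbounded, and the natural substitute $\|D^2 v^{\delta,\eps}\|_\infty$ blows up like a negative power of $\delta$ (cf.\ Lemma \ref{L.9.3}), so this error is neither $O(\mu)$ nor controlled by $\|\sigma\|_{\rm Lip}$ alone; the same obstruction reappears if you instead touch the semiconvex regularization directly with $v^{\delta,\eps}$. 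If $\Sigma$ were independent of $x$ your scheme would work as stated, but here it does not close.

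The known way to absorb that term is the symmetric doubling-of-variables argument: penalize with $\beta(|x_1-x_2|^2+|t_1-t_2|^2)$, invoke the Crandall--Ishii matrix inequality, and multiply it by the block matrix built from $\sigma(t_1,x_1)$ and $\sigma(t_2,x_2)$, so that the Lipschitz continuity of $\sigma$ yields $\Tr(\Sigma(t_2,x_2)X_2-\Sigma(t_1,x_1)X_1)\le C\beta(|t_1-t_2|^2+|x_1-x_2|^2)\le C/\beta$ without ever needing a two-sided Hessian bound; together with the extra $-\frac{\delta}{2}\Tr X_1\le C\delta\beta$ term and the choice $\beta=\delta^{-1/2}$ this gives the $\delta^{1/2}$ rate (plus a localization function and the $O(\eps)$ terminal-data shift, which you do handle correctly). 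This is precisely the paper's proof of Proposition \ref{P.5.4}. You gesture at this route in a single clause (``equivalently \ldots doubling-of-variables''), but you neither carry it out nor explain how the penalization interacts with the $\sigma$-structure, so as written the central estimate of your argument is asserted rather than proved; the remainder of your proposal is correct and matches the paper.
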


\begin{proof}
In view of \eqref{conv} and Lipschitz continuity of $v^{\delta,\eps}$,
\beq\lb{9.10}
|v^{\delta,\eps}-u^{\delta}_{\eps}|\leq C\eps.
\eeq
Take $\delta=\nu_h$. Then, by the triangle inequality,  the first conclusion follows from Corollary \ref{C.4.5} and Proposition \ref{P.5.4} below, after taking $\eps=h^{1/2}$.

\smallskip

\quad If $\Sigma$ is non-degenerate, then $1/c_0\geq \Sigma_i\geq c_0$ for some $c_0\in (0,1)$ and all $i=1,\ldots,d$. We take $\delta=c_0$ and replace $\Sigma_i$ by $\Sigma_i-\delta$. The equation \eqref{9.1} becomes
\begin{equation*}
\begin{cases}
\partial_t v^{\delta,\eps}(t,x) + H(t,x, \nabla v^{\delta,\eps}) =-\frac12\sum_{i=1}^d((\Sigma_i(t,x) -\delta)+\delta) \partial_{x_i}^2 v^{\delta,\eps} \, &\text{ in } (0,T+\eps)\times \mathbb{R}^d,\\
 v^{\delta,\eps}(T+\eps,x) = g(x) \, &\text{ on }  \mathbb{R}^d.
 \end{cases}
\end{equation*}
Then $v^{\delta,0}$ is the same as $v$. Since $v^{\delta,\eps}$ is uniformly Lipschitz continuous, by comparing $v^{\delta,0}\pm C\eps$ with $v^{\delta,\eps}$, we get
\[
|v^{\delta,0}-v^{\delta,\eps}|\leq C\eps.
\]
Note that in the non-degenerate case,  \eqref{N2} holds when $\tau\ll h^2\ll 1$. Thus, we let $V^{\tau,h}$ solve \eqref{4.2} with $\nu_h=0$.
Then, after taking $\eps=h^{1/2}$, Corollary \ref{C.4.5} (see also Remark \ref{R.2}) and \eqref{9.10} yield
\[
\left|v-V^{\tau,h}\right|\leq \left|v^{\delta,\eps}-V^{\tau,h}\right|+C\eps\leq \left|u^{\delta}_{\eps}-V^{\tau,h}\right|+C\eps\leq Ch^{\alpha/2}.
\]  
\end{proof}

\quad In the following  proposition, we apply the classical viscosity solution approach to estimate the difference between $v^{\delta,\eps}$ and $v$.

\begin{proposition}\lb{P.5.4}
There exists $C>0$ such that for $(t,x)\in (0,T]\times\bbR^d$,
\[
|v(t,x)-v^{\delta,\eps}(t,x)|\leq C\left(\delta^{1/2}+\eps \right).
\]
\end{proposition}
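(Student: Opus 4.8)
The plan is to bound $|v-v^{\delta,\eps}|$ in two steps, combined at the end by the triangle inequality. Writing $v^{\delta,0}$ for the solution of \eqref{9.1} with $\eps=0$, Step~1 establishes $|v^{\delta,\eps}-v^{\delta,0}|\le C\eps$, and Step~2 (a vanishing--viscosity estimate) establishes $|v^{\delta,0}-v|\le C\delta^{1/2}$.

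\emph{Step 1.} On $(0,T)\times\bbR^d$ the functions $v^{\delta,\eps}$ and $v^{\delta,0}$ solve the same equation (the coefficients in \eqref{9.1} coincide with the original ones on $[0,T]$), and differ only in their terminal data: $v^{\delta,0}(T,\cdot)=g$, while $v^{\delta,\eps}(T,\cdot)$ is merely close to $g$. Indeed, Lemma~\ref{L.9.1} gives $\|\partial_t v^{\delta,\eps}\|_\infty\le C$ uniformly in $\delta$, so $|v^{\delta,\eps}(T,x)-g(x)|=|v^{\delta,\eps}(T,x)-v^{\delta,\eps}(T+\eps,x)|\le C\eps$. Since \eqref{9.1} has no zeroth--order term, $v^{\delta,0}\pm C\eps$ are, respectively, a super- and a sub-solution of the equation satisfied by $v^{\delta,\eps}$ on $(0,T)$, with terminal values sandwiching $v^{\delta,\eps}(T,\cdot)$; the comparison principle for uniformly parabolic Hamilton--Jacobi equations with Lipschitz Hamiltonian (see \cite{tranbook,LMT}) yields $|v^{\delta,\eps}-v^{\delta,0}|\le C\eps$ on $(0,T]\times\bbR^d$.

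\emph{Step 2.} Both $v$ and $v^{\delta,0}$ are bounded and uniformly Lipschitz viscosity solutions sharing the terminal data $g$ at $t=T$, and $v^{\delta,0}$ solves \eqref{3.1} with the additional diffusion $\tfrac12\delta\Delta v^{\delta,0}$. We estimate $\sup(v-v^{\delta,0})$ by the doubling of variables; the reverse inequality is symmetric. For small $\beta,\lambda,\gamma>0$ maximize
\[
\Phi(t,x,y):= v(t,x)-v^{\delta,0}(t,y)-\frac{|x-y|^2}{2\beta}-\lambda(T-t)-\gamma\bigl(|x|^2+|y|^2\bigr)
\]
over $[0,T]\times\bbR^d\times\bbR^d$, with a maximizer $(\bar t,\bar x,\bar y)$ (it exists as $v,v^{\delta,0}$ are bounded). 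Comparing $\Phi(\bar t,\bar x,\bar y)$ with $\Phi(\bar t,\bar y,\bar y)$ and using the Lipschitz bound for $v$ gives $|\bar x-\bar y|\le C\beta$; set $p:=(\bar x-\bar y)/\beta$. If $\bar t=T$, then $\Phi\le g(\bar x)-g(\bar y)-|\bar x-\bar y|^2/(2\beta)\le C\beta$. If $\bar t<T$, the parabolic theorem on sums furnishes $a_1,a_2\in\bbR$ with $a_1-a_2=\lambda$ and $X,Y\in\mathcal{S}^d$ (with first--order parts matching $p$ up to $O(\gamma)$) for which the viscosity inequalities of $v$ at $(\bar t,\bar x)$ and of $v^{\delta,0}$ at $(\bar t,\bar y)$ may be invoked, and such that
\[
\begin{pmatrix} X & 0\\ 0 & -Y\end{pmatrix}\le \frac{3}{\beta}\begin{pmatrix} I & -I\\ -I & I\end{pmatrix},\qquad -\frac{3}{\beta}I_{2d}\le \begin{pmatrix} X & 0\\ 0 & -Y\end{pmatrix}
\]
(up to $O(\gamma)$). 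Subtracting the two viscosity inequalities: Lemma~\ref{L.2.3} bounds the first--order discrepancy by $\|c\|_{\Lip}|\bar x-\bar y|+\|f\|_{\Lip}|\bar x-\bar y|^2/\beta\le C\beta$; testing the matrix inequality against $(\sigma_i(\bar t,\bar x)e_i,\sigma_i(\bar t,\bar y)e_i)$ --- here the diagonal form $\sigma={\rm diag}(\sigma_1,\dots,\sigma_d)$ is crucial, since only diagonal entries of $X,Y$ enter --- gives $\sum_i[\Sigma_i(\bar t,\bar x)X_{ii}-\Sigma_i(\bar t,\bar y)Y_{ii}]\le \tfrac{3}{\beta}\sum_i(\sigma_i(\bar t,\bar x)-\sigma_i(\bar t,\bar y))^2\le \tfrac{C}{\beta}|\bar x-\bar y|^2\le C\beta$ by the Lipschitz continuity of each $\sigma_i$; and the extra Laplacian contributes $\tfrac{\delta}{2}|\Tr Y|\le C\delta/\beta$ since $|Y|\le C/\beta$. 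Hence, after sending $\gamma\to0$, $\lambda\le C\beta+C\delta/\beta$. Taking $\lambda$ just above this value excludes the interior case, so $\Phi\le C\beta$ everywhere and thus $\sup_{t,x}(v-v^{\delta,0})\le C\beta+\lambda T\le C\beta+C\delta/\beta$; optimizing at $\beta=\delta^{1/2}$ gives $\sup(v-v^{\delta,0})\le C\delta^{1/2}$. The symmetric bound yields $|v-v^{\delta,0}|\le C\delta^{1/2}$, and Step~1 then completes the proof via the triangle inequality.

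\emph{Main obstacle.} The crux is the second-order analysis: one has to pair the merely Lipschitz, possibly vanishing weights $\Sigma_i$ with the matrices $X,Y$ produced by the theorem on sums --- exploiting the diagonal structure of $\sigma$ so that the off-diagonal blocks are never needed --- while at the same time isolating the $\delta/\beta$ contribution of the added Laplacian; it is the competition between $\delta/\beta$ and $\beta$, balanced at $\beta\sim\delta^{1/2}$, that yields the $\delta^{1/2}$ rate. The non-compactness of $\bbR^d$, handled by the $\gamma$-penalization and the limit $\gamma\to0$, is routine.
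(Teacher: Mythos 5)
Your proposal is correct and follows essentially the same route as the paper: the $\eps$-error is absorbed through the terminal data using the uniform Lipschitz bound of Lemma \ref{L.9.1}, and the $\delta^{1/2}$ rate comes from doubling variables with the (parabolic) Crandall--Ishii lemma, using the Lipschitz continuity of the diagonal $\sigma$ to bound the second-order discrepancy by $C\beta$ and $|{\Tr Y}|\le C/\beta$ for the added $\delta$-Laplacian, then balancing $\beta\sim\delta^{1/2}$. The differences (space-only doubling with a $\lambda(T-t)$ penalty and quadratic localization in $\gamma$, versus the paper's space-time doubling, $\frac{T-t}{T}\gamma$ penalty and bump-function localization sent off with $A\to\infty$; testing the matrix inequality with $(\sigma_i(\bar t,\bar x)e_i,\sigma_i(\bar t,\bar y)e_i)$ versus multiplying by the $\sigma\sigma^T$ block matrix and taking traces) are routine technical variants of the same argument.
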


\begin{proof}
Let us only prove the estimate for $v-v^{\delta,\eps}$, and the one for $v^{\delta,\eps}-v$ is almost identical.
Since $v^{\delta,\eps}$ is uniformly Lipschitz continuous in dependent of $\delta$ by Lemma \ref{L.9.1} and $v^{\delta,\eps}(T+\eps,x)=g(x)$,  
we have $v(T,x)-(v^{\delta,\eps}(T,x)+C\eps)\leq 0$ for some $C$ sufficiently large. Note that $v^{\delta,\eps}+C\eps$ satisfies the same equation as $v^{\delta,\eps}$ does. Therefore, after replacing $v^{\delta,\eps}$ by $v^{\delta,\eps}+C\eps$,  it suffices to prove 
\[
v-v^{\delta,\eps}\leq C\delta^{1/2}
\]
under the assumption that $v(T,x)-v^{\delta,\eps}(T,x)\leq 0$.

\quad Let $3\gamma:=\sup_{(t,x)\in [0,T]\times\bbR^d} ({v(t,x)-v^{\delta,\eps}(t,x)})$ and assume that $\gamma>0$, otherwise there is nothing to prove. Then let $R>0$ be sufficiently large such that
\beq\lb{3334}
\sup_{(t,x)\in [0,T]\times B_R} [{v(t,x)-v^{\delta,\eps}(t,x)}]\geq 2\gamma.
\eeq
Let $R_1:=AR$ for some $A\geq 2$ sufficiently large, and we consider a radially symmetric, and radially non-decreasing function $\phi:\bbR^d\to [0,\infty)$ such that 
\beq\lb{3336}
\begin{cases}
\phi(x)\equiv 0 \quad &\text{ for $x\in B_R$},\\
\phi(x)\geq \|v^{\delta,\eps}\|_\infty+\|v\|_\infty \quad &\text{ for } x\in \bbR^n \setminus B_{R_1},
\end{cases}
\eeq
and for some $C>0$,
\beq\lb{3335}
\begin{cases}
|\phi(x)|\leq C \quad &\text{ for } x\in\bbR^n,\\
|\nabla\phi(x)|+ |\nabla^2\phi(x)|\leq C/A\quad &\text{ for } x\in \bbR^n.
\end{cases}
\eeq
Below, all constants' dependence on $A$ will be explicit, and all $C$'s are independent of $A$.

\quad Due to \eqref{3334}, \eqref{3336} and the assumption that $v(T,\cdot)\leq v^{\delta,\eps}(T,\cdot)$, there exists $(t_0,x_0)\in [0,T)\times B_{R_1}$ such that
\beq\lb{3.71}
\begin{aligned}
&v(t_0,x_0)-v^{\delta,\eps}(t_0,x_0)-\frac{T-t_0}{T}\gamma-2\phi(x_0)\\
=\, &\sup_{(t,x)\in[0,T]\times \bbR^d} \left[v(t,x)-v^{\delta,\eps}(t,x)-\frac{T-t}{T}\gamma-2\phi(x)\right]=:\gamma'\geq \gamma.  
\end{aligned}
\eeq
We write
\[
v^\gamma(t,x):=v^{\delta,\eps}(t,x)+\frac{T-t}{T}\gamma.
\]
For any $\beta\geq 1$, there are $(t_1,x_1),(t_2,x_2)\in [0,T)\times B_{R_1}$ such that
\beq\lb{3.7}
\begin{aligned}
& v(t_2,x_2)-v^\gamma(t_1,x_1)  -\phi(x_1)-\phi(x_2)- \beta \left(|x_1 - x_2|^2+|t_1 - t_2|^2\right)\\
 =\,& \sup_{
(t,x),(t',y')\in [0,T]\times \bbR^d}
\left[ v(t,x) - v^\gamma(t',x') - \phi(x)-\phi(x')-\beta \left(|x - x'|^2+|t-t'|^2\right)\right]\\
\geq\,& v(t_0,x_0)-v^\gamma(t_0,x_0)-2\phi(x_0)=\gamma'.
\end{aligned}
\eeq
Since $v$ and $\phi$ are Lipschitz continuous, it follows from \eqref{3.71} and \eqref{3.7} that
\begin{align*}
\gamma' &\leq   v(t_1,x_1)-v^\gamma(t_1,x_1) -2\phi(x_1)+C(|t_1-t_2|+|x_1-x_2|)-\beta \left(|x_1 - x_2|^2+|t_1 - t_2|^2\right)\\
&\leq \gamma'+C|t_1-t_2|+C|x_1-x_2|-\beta \left(|x_1 - x_2|^2+|t_1 - t_2|^2\right),
\end{align*}
which implies that
\beq\lb{3.8}
|t_1-t_2|+|x_1-x_2|\leq \frac{C}{\beta}.
\eeq

\quad  
Note that $v^\gamma$ satisfies
\[
    \partial_t v^\gamma(t,x)+\frac{\gamma}T +H(t,x, \nabla v^{\gamma}(t,x)) +\frac12\sum_{i=1}^d(\Sigma_i(t,x) +\delta) \partial_{x_i}^2 v^{\gamma}= 0
\]
in the viscosity sense, and $v$ is the solution to \eqref{3.1}.
Thus, the Crandall-Ishii lemma \cite[Theorem 8.2]{user} yields that there exist two symmetric matrices $X_1$, $X_2$ satisfying the following:
\beq\lb{3.111}
-(2\beta+|J|)I\leq
\begin{pmatrix}
X_2 & 0\\
0 & -X_1
\end{pmatrix}
\leq J+\frac{1}{2\beta}J^2,
\quad\text{ with }
J:=2\beta\begin{pmatrix}
I & -I\\
-I & I
\end{pmatrix},
\eeq
and
\begin{multline}\lb{3.22}
\frac{\gamma}T+    H(t_1,x_1, p_1) +\frac12 \Tr 
\left[(\Sigma(t_1,x_1)+\delta) (X_1+D^2\phi(x_1))\right]\leq 0\\
   \leq H(t_2,x_2, p_2) +\frac12 \Tr \left[\Sigma(t_2,x_2) (X_2-D^2\phi(x_2))\right],
\end{multline}
and
\[
p_1:=2\beta (x_1-x_2)+\nabla\phi(x_1),\quad p_2:=2\beta (x_1-x_2)-\nabla\phi(x_2).
\]

\quad Now, 
by \eqref{3335} and \eqref{3.8}, we have
\begin{align*}
\begin{cases}
|p_1|+|p_2|\leq C, \\
|p_1-p_2|\leq C/A.
\end{cases}
\end{align*}
By \eqref{3335}, \eqref{3.8} again and Lemma \ref{L.2.3}, \eqref{3.22} can be simplified to
\beq\lb{3.13}
\begin{aligned}
 \frac{\gamma}T&\leq  C(|t_1-t_2|+|x_1-x_2|)(1+|p_1|)+C|p_1-p_2|+C(|D^2\phi(x_1)|+|D^2\phi(x_2)|)\\
 &\qquad+\frac12\Tr 
    (\Sigma( t_2, x_2) X_2-\Sigma( t_1, x_1) X_1)\\
&\leq  C(\beta^{-1}+A^{-1})+CA^{-1}+\frac12\Tr 
    (\Sigma(t_2,x_2) X_2-\Sigma(t_1,x_1) X_1)-\frac{\delta}{2}\Tr X_1.
\end{aligned}
\eeq

\quad It follows from \eqref{3.111} that 
\[
|X_1|,\,|X_2|\leq C\beta.
\]
Note that $\Sigma=\sigma\sigma^T$, $\Sigma$ and $\sigma$ are diagonal matrices, and $\sigma$ is Lipschitz continuous. We multiply \eqref{3.111} by the nonnegative symmetric matrix
\[
\begin{pmatrix}
\sigma(t_2,x_2)\sigma(t_2,x_2)^T & \sigma(t_1,x_1)\sigma(t_2,x_2)^T\\
\sigma(t_2,x_2)\sigma(t_1,x_1)^T & \sigma(t_1,x_1)\sigma(t_1,x_1)^T
\end{pmatrix}
\]
on the left-hand side,
and take traces to obtain
\[
\begin{aligned}
\frac12\Tr(\Sigma(t_2,x_2)X_2 )&-\frac12\Tr( \Sigma(t_1,x_1)X_1 )\leq 3\beta \Tr\left[(\sigma(t_2,x_2)- \sigma(t_1,x_1) )(\sigma(t_2,x_2)- \sigma(t_1,x_1) )^T\right]\\
&\leq C\beta (|t_1-t_2|^2+|x_1-x_2|^2)\leq C/\beta,
\end{aligned}
\]
where in the last inequality, we applied \eqref{3.8}.
Using these in \eqref{3.13} yields
\[
\gamma\leq C(\beta^{-1}+A^{-1})+C\delta\beta+C/\beta.
\]
Note that the inequality holds uniformly for all $A$.
Thus, taking $\beta:={\delta}^{-1/2}$ and passing $A\to \infty$, we obtain
$
\gamma\leq C{\delta}^{1/2}$,
which finishes the proof.
\end{proof}
We refer the reader to \cite[Proposition 2.5]{CGMT} for a different proof of the above proposition.
Further, the convergence rate $O\left(\delta^{1/2}+\eps\right)$ is optimal (see \cite{QSTY}).

\quad As a corollary of Theorem \ref{T.4.3} and Proposition \ref{P.5.4} with $\eps=h^{1/2}$, since $v^{\delta,\eps}$ is uniformly Lipschitz continuous independent of $\delta$, we also derive a regularity result of the discrete solutions.
\begin{corollary}
Under the assumptions of Theorem \ref{T.4.3}, for any $(t,x),(s,y)\in \Omega_T^{\tau,h}$,
\[
|V^{\tau,h}(t,x)-V^{\tau,h}(s,y)|\leq C\left(|t-s|+|x-y|+\nu_h^{-(1+\alpha)}h^{\alpha/2}+\nu_h^{1/2}\right).
\]
If $\Sigma$ is uniformly elliptic, then
\[
|V^{\tau,h}(t,x)-V^{\tau,h}(s,y)|\leq C\left(|t-s|+|x-y|+h^{\alpha/2}\right).
\]
\end{corollary}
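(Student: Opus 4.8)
The plan is to observe that $V^{\tau,h}$ is, uniformly on $\Omega_T^{\tau,h}$, within the discretization error of a genuinely Lipschitz continuous function, and then to transfer the modulus of continuity at the cost of twice that error via the triangle inequality. I would compare $V^{\tau,h}$ with the viscosity solution $v$ of \eqref{3.1} (equivalently, with $v^{\delta,\eps}$ for the fixed choice $\delta=\nu_h$, $\eps=h^{1/2}$), using the bound already established in Theorem \ref{T.4.3}.

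The first step is to record that $v$ is Lipschitz in $(t,x)$ with a constant depending only on the data. Since $\|g\|_{C^3}<\infty$, Lemma \ref{L.9.1} gives $\|\partial_t v^{\delta,\eps}\|_\infty+\|\nabla v^{\delta,\eps}\|_\infty\le C$ with $C$ independent of $\delta\in(0,1)$, and Proposition \ref{P.5.4} gives $v^{\delta,\eps}\to v$ uniformly on $(0,T]\times\bbR^d$ as $\delta,\eps\to 0$; a uniform limit of functions Lipschitz with a common constant is Lipschitz with that constant, so $|v(t,x)-v(s,y)|\le C(|t-s|+|x-y|)$. The second step is the triangle inequality: for $(t,x),(s,y)\in\Omega_T^{\tau,h}$,
\[
|V^{\tau,h}(t,x)-V^{\tau,h}(s,y)|\le C(|t-s|+|x-y|)+2\sup_{\Omega_T^{\tau,h}}\bigl|V^{\tau,h}-v\bigr|,
\]
into which I would insert the estimates of Theorem \ref{T.4.3}. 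In the general (possibly degenerate) case $\sup_{\Omega_T^{\tau,h}}|V^{\tau,h}-v|\le C\bigl(\nu_h^{-(1+\alpha)}h^{\alpha/2}+\nu_h^{1/2}\bigr)$ (using the two--term form \eqref{eq:rate-discrete} and $h^{1/2}\le \nu_h^{-(1+\alpha)}h^{\alpha/2}$), which yields the first claimed inequality; in the uniformly elliptic case $\nu_h=0$ is admissible and $\sup_{\Omega_T^{\tau,h}}|V^{\tau,h}-v|\le Ch^{\alpha/2}$, which yields the second.

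I do not expect a genuine obstacle here: once the Lipschitz constant of $v$ is known to depend only on the data, the rest is bookkeeping. The one point to be careful about is exactly that \emph{uniformity} of the Lipschitz constant, which is why I would go through $v^{\delta,\eps}$ and the $\delta$-independent estimate of Lemma \ref{L.9.1} rather than argue directly for $v$. As an alternative one may avoid $v$ altogether and compare $V^{\tau,h}$ to $v^{\delta,\eps}$ for $\delta=\nu_h$, $\eps=h^{1/2}$ directly, using Corollary \ref{C.4.5} together with $|v^{\delta,\eps}-u^\delta_\eps|\le C\eps$ from \eqref{9.10} in place of Theorem \ref{T.4.3}; this produces the same two bounds.
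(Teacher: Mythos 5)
Your proposal is correct and is essentially the paper's argument: the paper also transfers the ($\delta$-independent) Lipschitz modulus of $v^{\delta,\eps}$ to $V^{\tau,h}$ via the triangle inequality, invoking Theorem \ref{T.4.3} and Proposition \ref{P.5.4} with $\eps=h^{1/2}$ (your ``alternative'' route through $v^{\delta,\eps}$ with $\delta=\nu_h$ is precisely what the paper does, and your main route through $v$ itself is only a cosmetic variant).
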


\bibliographystyle{abbrv}

\end{document}